\documentclass[aos]{imsart}

\RequirePackage{amsthm,amsmath,amsfonts,amssymb}
\RequirePackage{natbib}
\RequirePackage[colorlinks,citecolor=blue,urlcolor=blue]{hyperref}
\RequirePackage{graphicx}
\usepackage{xcolor}
\usepackage{bm}
\usepackage{tikz}
\usetikzlibrary{arrows.meta}
\usepackage{calligra}
\usepackage{mathabx}
\startlocaldefs
\theoremstyle{plain} 
\newtheorem{thm}{Theorem}[section]
\newtheorem{lem}[thm]{Lemma}
\newtheorem{prop}[thm]{Proposition}
\newtheorem{cor}[thm]{Corollary}

\theoremstyle{remark}
\newtheorem{defn}[thm]{Definition}

\newtheorem{ex}[thm]{Example}

\newtheorem{rem}[thm]{Remark}


 
\DeclareMathOperator{\tr}{tr} 
\DeclareMathOperator{\cov}{\mathbb{V}} 
\DeclareMathOperator{\V}{\mathbb{V}}

\DeclareMathOperator{\dom}{dom} 
\renewcommand{\d}{{\rm d}}
 
\DeclareMathOperator{\conv}{Conv} 


\newcommand{\B}{\mathbb{B}}

\newcommand{\cC}{\mathcal{C}}

\newcommand{\E}{\mathbb{E}}
\newcommand{\cE}{\mathcal{E}}

\newcommand{\cG}{\mathcal{G}}

\renewcommand{\i}{{\rm i}}
\newcommand{\cI}{\mathcal{I}}

\newcommand{\cL}{\mathcal{L}}
\renewcommand{\L}{\mathbb{L}}

\newcommand{\cS}{\mathcal{S}}

\renewcommand{\P}{\mathbb{P}}

\newcommand{\R}{\mathbb{R}}
\renewcommand{\S}{\mathbb{S}}

\newcommand{\cT}{\mathcal{T}}

\newcommand{\cV}{\mathcal{V}}
\newcommand{\X}{\mathbf{X}}

\newcommand{\bs}{\boldsymbol}

\newcommand{\<}{\langle}
\renewcommand{\>}{\rangle}

\newcommand{\g}{\text{\calligra g}\,}
\newcommand{\cconv}{{\rm C}\overline{{\rm onv}}}

\newcommand{\note}[1]{\textbf{\textcolor{red}{#1}}}

\endlocaldefs

\begin{document}

\begin{frontmatter}
\title{Entropic covariance models}
\runtitle{Entropic covariance models}

\begin{aug}
\author{\fnms{Piotr}~\snm{Zwiernik}\ead[label=e2]{piotr.zwiernik@utoronto.ca}\orcid{0000-0003-3431-131X}}
\address{Department of Statistical Sciences,
University of Toronto\printead[presep={,\ }]{e2}}
\end{aug}

\begin{abstract}
In covariance matrix estimation, one of the challenges lies in finding a suitable model and an efficient estimation method. Two commonly used modelling approaches in the literature involve imposing linear restrictions on the covariance matrix or its inverse. Another approach considers linear restrictions on the matrix logarithm of the covariance matrix. In this paper, we present a general framework for linear restrictions on different transformations of the covariance matrix, including the mentioned examples. Our proposed estimation method solves a convex problem and yields an $M$-estimator, allowing for relatively straightforward asymptotic (in general) and finite sample analysis (in the Gaussian case). In particular, we recover standard $\sqrt{n/d}$ rates, where $d$ is the dimension of the underlying model. Our geometric insights allow to extend various recent results in covariance matrix modelling. This includes providing unrestricted parametrizations of the space of correlation matrices, which is alternative to a recent result utilizing the matrix logarithm. \end{abstract}

\begin{keyword}[class=MSC]
\kwd[Primary ]{62H99}
\kwd[; secondary ]{62H22,62R01}
\end{keyword}

\begin{keyword}
\kwd{covariance matrix estimation}
\kwd{spectral sums}
\kwd{convex analysis}
\kwd{positive definite matrices}
\kwd{linear covariance models}
\kwd{Bregman divergence}
\end{keyword}

\end{frontmatter}

\section{Introduction}

Estimating the covariance matrix is a fundamental problem in many fields, including multivariate statistics, spatial statistics, finance, and machine learning. The literature offers a wide range of models that have been considered for this purpose; e.g.  \cite{andersonLinearCovariance, jennrich1986unbalanced, boik2002spectral, pourahmadi2013high}. One popular approach involves exploiting linear restrictions on factors in a decomposition of $\Sigma$ or its transformation \cite{pourahmadiGLM}. For instance, in linear structural equation models, specific entries of the matrix $L$ in the LDL decomposition $\Sigma^{-1} = LDL^\top$ are set to zero.

Since modelling via the LDL decomposition heavily relies on the variable ordering in the system, as an alternative, linear restrictions can be directly imposed on the covariance matrix $\Sigma$ or some transformation of it; e.g.  \cite{andersonLinearCovariance, dempster1972covariance, sturmfels2010multivariate}. This approach has gained attention due to the prevalence of such structures in multiple applications. Examples include Toeplitz matrices or block-Toeplitz matrices in time series and spatial statistics \cite{anderson1978maximum}, linear structures encoded by trees in Brownian motion tree models \cite{zwiernik2017maximum}, and other types of symmetries \cite{szatrowski2004patterned}. Gaussian graphical models, which enforce sparsity on the inverse of $\Sigma$, and their colored versions have also been widely used in multivariate statistics and machine learning \cite{dempster1972covariance,lau96,hojsgaard2008graphical}. These models are popular due to their interpretability, as the entries of $\Sigma$ and its inverse correspond after normalization to correlations or partial correlations.

Another type of restriction considered in the literature involves linear constraints on the matrix logarithm of the covariance matrix \cite{leonard1992bayesian, chiu1996matrix, battey17}. While the interpretation of such constraints is generally less clear, these models have gained popularity because modelling the matrix logarithm $\log(\Sigma)$ does not require handling the positivity constraints. When $\Sigma$ is diagonal, these models can be viewed as extensions of classical log-linear models for heterogeneous variances. The matrix logarithm of the covariance matrix has found applications in stochastic volatility models, medical imaging, spatial statistics, and quantum geometry  \cite{kawakatsu2006matrix, ishihara2016matrix, asai2015long, bauer2011forecasting, zhu2009intrinsic, lesage2007matrix, pavlov2022gibbs}.

Notably, the fact that the matrix logarithm of a covariance matrix is an unrestricted symmetric matrix has important theoretical implications. For instance, in recent work \cite{archakov2021new} have shown that the logarithm of the \emph{correlation} matrix has properties similar to Fisher's Z-transformation. Having an unrestricted parametrization of correlation matrices is considered a major breakthrough in temporal modelling of correlation matrices, which is critical in the GARCH approach. Leveraging similar ideas, we provide technology to get an unrestricted parameterization of various other covariance models, which may be of independent interest. In this context, we propose to study the mapping $\Sigma\mapsto \Sigma-\Sigma^{-1}$ as a more tractable alternative to the matrix logarithm.

\textbf{Main goals of this paper: }
The research on the matrix logarithm has motivated the exploration of linear restrictions on more general functions of the covariance matrix. In this statistical context, functions such as the matrix logarithm are treated as link functions, analogous to generalized linear models (GLMs) \cite{pourahmadiGLM,zou2017covariance,lin2023additive}. Our paper contributes to the development of a GLM methodology and a data-based framework for modelling covariance matrices, building on the work initiated by Anderson, Pourahmadi, and others. In our general approach we follow  steps of  \cite{barndorff:78,banerjee2005clustering} where convex analysis was used to study theoretical properties of exponential families. This is similar to the construction of generalized exponential families and matrix nearness problems \cite{dawidGEF,dhillon2008matrix}, albeit applied to covariance matrices in a broader sense than previously explored.

The main goals of this paper are twofold:
\begin{enumerate}
\item[(i)] To propose a general framework for modelling covariance matrices that allows for efficient estimation procedures based on convex optimization.
\item[(ii)] To enhance the understanding of the geometry of covariance matrices and show how convexity simplifies the statistical analysis.
\end{enumerate}
We now briefly describe how these goals are approached.

\textbf{Entropic covariance models (informal):}
The key idea is to utilize the gradient mapping $\nabla F(\Sigma)$ of a general strictly convex and differentiable function defined on the positive definite cone. This mapping induces a one-to-one transformation of covariance matrices, and we impose affine (or general convex) restrictions on the result of this transformation. Affine constraints can arise from regression of the covariance matrix (or its transformation) on auxiliary information \cite{zou2017covariance,lin2023additive}, specific symmetry patterns \cite{szatrowski2004patterned}, or sparsity \cite{dempster1972covariance,hastie2015statistical}.

For a concrete example, consider the set of $3\times 3$ covariance matrices $\Sigma$ such that $L=\log(\Sigma)$ satisfies $L_{13}=0$. This example appears later in Section~\ref{sec:sparse}.  Models with zero restrictions on $\log\Sigma$ have been recently studied in \cite{rybak2021sparsity,pavlov2023logarithmically}. Here the main problem is that  simple constraints in $L$ translate to complicated constraint in $\Sigma$, which may potentially complicate the estimation process. This is one of the problems we  address in this article.

\textbf{Estimation under linear constraints:}
Estimating models under linear restrictions on the inverse covariance matrix is relatively straightforward. Let $\X\in \R^{n\times m}$ be the data matrix with independent rows coming from a mean zero distribution with covariance $\Sigma_0$. A natural approach is to optimize the Gaussian log-likelihood, which, up to additive constants, is given by
\begin{equation}\label{eq:GLL}
\ell(K) = \tfrac{n}{2}\log\det(K) - \tfrac{n}{2}\tr(S_n K), \qquad S_n = \frac{1}{n} \X^\top \X,
\end{equation}
where $K=\Sigma^{-1}$ satisfies the given constraints \cite{andersonLinearCovariance, sturmfels2010multivariate, barratt2022covariance}. This optimization problem is convex since $\ell(K)$ is a strictly concave function of $K$, and the constraints are linear in $K$. However, in the case of linear constraints on $\Sigma$, a canonical estimation approach is less obvious, and the Gaussian log-likelihood becomes multimodal \cite{zwiernik2017maximum}. This has motivated research on alternative estimation methods \cite{anderson73, christensen:89, sturmfels2019estimating,amendola2021likelihood}. One natural approach is to replace the maximum likelihood estimator (MLE) with the least squares estimator, which minimizes the Frobenius distance $\|\Sigma-S_n\|^2_F$ over all $\Sigma$ in the given linear subspace.

In this paper, by utilizing the Bregman divergence \cite{bregman1967relaxation}, we generalize both the least squares approach for estimating under linear restrictions on $\Sigma$ and the problem of minimizing $\ell(K)$ for restrictions that are linear in $K$. Bregman matrix divergences have been used for matrix estimation and matrix approximation problems; e.g. \cite{dhillon2008matrix,ravikumar2010high,cai2012optimal,llorens2022projected}. However, in these papers, Bregman divergence was used to analyze existing covariance models. Here, it is studied in the context of new models and to provide more insight in covariance matrix geometry.

 For every entropic model the resulting loss function is strictly convex, and its Hessian does not depend on the data. This is similar to the negative log-likelihood function in exponential families, making the theoretical analysis of this estimator rather straightforward following the elegant work of \cite{niemiro1992asymptotics}. In particular, we show that our proposed estimator, which we call the Bregman estimator, is consistent and asymptotically normal when the sample size goes to infinity. We complementary these results with finite sample analysis in the case when the data come from the Gaussian distribution, which shows that the Bregman estimator remains a good estimator in high-dimensions as long as  $\sqrt{{d}/{n}}$ remains small, where $d$ is the dimension of the underlying model. This recovers standard parametric rates. Further, in Section~\ref{sec:numeric} we provide various approaches to perform numerical optimization for Bregman estimators.

\textbf{Geometry of entropic models:} One of the contributions of this paper is providing new insights into the geometry of various covariance models that explain and sometimes greatly generalize existing results. One example is a far reaching generalization of Theorem~1 in \cite{archakov2021new}, which provides an unrestricted parametrization for correlation matrices; see Section~\ref{sec:archakov1}. Another class of insights is related with the Jordan algebras of symmetric matrices, which we discuss in Section~\ref{sec:jordan}. These results explain why for certain entropic models the maximum likelihood estimator is available in closed form; e.g. \cite{lesage2007matrix}. We expect these results will greatly improve our understanding of the linear models on the matrix logarithm of $\Sigma$ and other entropic models.

The article is organized as follows. Section~\ref{sec:defs} provides a brief overview of the necessary background in convex analysis and introduces the main definitions and running examples; see also Appendix~\ref{sec:spec}. Our proposed estimation method is presented in Section~\ref{sec:breg}, where we also demonstrate the convexity of the underlying optimization problem. Section~\ref{sec:stat} presents basic statistical analyses of the resulting estimator. Section~\ref{sec:mixed} investigates the geometry of entropic models in connection with convex analysis and mixed parametrizations. Section~\ref{sec:numeric} proposes simple numerical algorithms for computing our estimator. Section~\ref{sec:sparse} focuses on sparsity patterns.  Section~\ref{sec:jordan} provides insights into a particular type of linear constraints that define Jordan algebras. We conclude the paper in Section~\ref{sec:other} with a short discussion of future research directions.


\section{Preliminaries and definitions}\label{sec:defs}

Let $\mathbb{S}^m$ denote the real vector space of symmetric $m\times m$ matrices, and let $\mathbb{S}^m_+$ and $\overline{\mathbb{S}}^m_+$ represent the subsets of matrices that are positive definite and positive semidefinite, respectively. We equip $\mathbb{S}^m$ with the standard inner product $\langle A,B\rangle = \tr(AB)$ and the induced Frobenius norm $\|A\|_F = \sqrt{\langle A,A\rangle}$.

\subsection{Convex functions on $\S^m$}
By $\conv(\S^m)$ denote the set of convex functions $F:\S^m\to \R\cup \{+\infty\}$ that are not identically equal to $+\infty$ (these are called sometimes proper convex functions). The domain of $F\in \conv(\S^m)$ is the nonempty set $\dom( F)=\{A\in \S^m: F(A)<+\infty\}$. By $\cconv(\S^m)$ denote the class of all functions in $\conv(\S^m)$ that are lower semicontinuous on $\S^m$ --- these are also known as closed convex functions. Recall that $F$ is lower semicontinuous if the lower level-set $\{A: F(A)\leq t\}$ is closed for all $t\in \R$. Because convex functions are always continuous in the interior of their domain, this involves conditions on how $F$ behaves on the boundary of $\dom F$; see Appendix~\ref{sec:basicconv} for more details and references.

Reserve a special notation $\cE^m$ for functions $F:\S^m\to \R\cup\{+\infty\}$ such that:
\begin{enumerate}
	\item [(i)] $F\in \cconv(\S^m)$,
	\item [(ii)] $\S^m_+\subseteq \dom(F)\subseteq \overline\S^m_+$ or equivalently ${\rm int}(\dom(F))=\S^m_+$,
	\item [(iii)] $F$ is strictly convex and continuously differentiable on $\S^m_+$.
\end{enumerate}

\begin{rem}\label{rem:addi}
If $F$ satisfies only (i) and (iii), we replace $F(A)$ with $$
F(A)+\i_{\overline{\S}^m_+}(A), \qquad\mbox{where}\qquad \i_{\overline{\S}^m_+}(A)\;=\;\begin{cases}
	0 & \mbox{if }A\in \overline{\S}^m_+,\\
	+\infty & \mbox{otherwise}.
\end{cases}
$$
Because $\overline{\S}^m_+$ is closed and convex,  $\i_{\overline{\S}^m_+}\in \cconv(\S^m)$, and so also $F(A)+\i_{\overline{\S}^m_+}(A)\in \cconv(\S^m)$. Moreover, the interior of the domain of $F(A)+\i_{\overline{\S}^m_+}(A)$ is $\S^m_+$. For brevity we often include the indicator function only implicitly.
\end{rem}

In our running examples below, we define the function $F(\Sigma)$ for $\Sigma\in \S^m_+$. The function is then extended by lower semicontinuity to the boundary of $\overline \S^m_+$, and it is equal to $+\infty$ for all other points. The following are our running examples:
$$
\begin{array}{lp{2cm}l}
	\textbf{ (A)}\quad F_A(\Sigma)=
		-\log\det\Sigma & &\textbf{ (B)}\quad F_B(\Sigma)=\frac12 \tr(\Sigma^2)\\[.3cm]
		\textbf{ (C)}\quad F_C(\Sigma)=-\tr(\Sigma-\Sigma\log\Sigma) & &\textbf{ (D)}\quad F_D(\Sigma)=\tr(\Sigma^{-1})
\end{array}
$$
and the details on the lower semicontinuous extension are provided in Appendix~\ref{sec:runex}. The function $-F_A$ is called the Gaussian entropy. The function $2F_B$ is the squared Frobenius norm of $\Sigma$. The function $-F_C$ is the von Neumann entropy. Both (B) and (D) can be easily generalized:
$$
\begin{array}{lp{1cm}l}
	\textbf{ (B')}\quad F_{B,p}(\Sigma)=\frac1p \tr(\Sigma^p)\;\;\; \mbox{for }p\geq 1 & &\textbf{ (D')}\quad F_{D,p}(\Sigma)=\frac1p\tr(\Sigma^{-p})\;\;\; \mbox{for }p\geq 0.\end{array}
$$
Note that $p F_{B,p}$ is the p-Schatten norm of $\Sigma$ raised to power $p$, $F_{B,p}(\Sigma)=\tfrac1p\|\Sigma\|^p_p$, where denoting by $\lambda_1,\ldots,\lambda_m$ the eigenvalues of $\Sigma\in \S^m_+$ we have
\begin{equation}\label{eq:schatten}
	\|\Sigma\|_p=\sqrt[p]{\lambda_1^p+\ldots+\lambda_m^p}.
\end{equation}

Although we keep our theory as general as possible, we note that all our running examples are spectral functions of the form $F(\Sigma)=\tr(\phi(\Sigma))$ where $\phi:\R\to\R$ and the notation $\phi(\Sigma)$ means the corresponding matrix function, that is, if $\Sigma=U\Lambda U^\top$ is the spectral decomposition of $\Sigma$ then $\phi(\Sigma)=U \phi(\Lambda) U^\top$, where $\phi$ in $\phi(\Lambda)$ is applied to each diagonal entry of $\Lambda$; see, e.g., \cite{higham}. We call such functions spectral sums and, in Appendix~\ref{sec:spec}, we present useful results which make working with this function class particularly easy.

\subsection{Entropic covariance models}
 We are now ready to describe our set-up. The main object in our analysis is the gradient $\nabla F$ of $F$. We start with the following well-known fact.
 \begin{lem}\label{lem:11}
 	If $F\in \cE^m$, then $\nabla F:\S^m_+\to \S^m$ defines a one-to-one function on $\S^m_+$.
 \end{lem}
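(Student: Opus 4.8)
The plan is to deduce injectivity of $\nabla F$ directly from the strict convexity of $F$ on the open convex set $\S^m_+$, by establishing strict monotonicity of the gradient. First I would recall the first-order characterization of convexity: since $F$ is differentiable on the open convex set $\S^m_+$, for any $A,B\in\S^m_+$ the gradient inequality $F(B)\geq F(A)+\langle \nabla F(A),B-A\rangle$ holds. The refinement I actually need is its \emph{strict} form: when $F$ is strictly convex and $A\neq B$, this inequality is strict.

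To obtain the strict gradient inequality I would restrict $F$ to the segment joining $A$ and $B$, which lies entirely in $\S^m_+$ because the cone is convex. Setting $g(t)=F(A+t(B-A))$ for $t\in[0,1]$, the function $g$ is strictly convex and continuously differentiable on $[0,1]$ with $g'(0)=\langle \nabla F(A),B-A\rangle$. Strict convexity makes $g'$ strictly increasing, so $g(1)-g(0)=\int_0^1 g'(t)\,\d t > g'(0)$, which is precisely $F(B)>F(A)+\langle \nabla F(A),B-A\rangle$.

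With the strict inequality in hand, I would add it to the analogous inequality obtained by swapping the roles of $A$ and $B$. The $F(A)$ and $F(B)$ terms cancel and the cross terms combine to give strict monotonicity of the gradient:
$$
\langle \nabla F(A)-\nabla F(B),\ A-B\rangle \;>\; 0 \qquad\text{whenever } A\neq B.
$$
Injectivity is then immediate: if $\nabla F(A)=\nabla F(B)$, the left-hand side vanishes, forcing $A=B$.

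I expect no serious obstacle, as the statement is classical. The only point requiring a little care is the passage from strict convexity of $F$ to the \emph{strict} gradient inequality; the plain inequality alone yields only monotonicity, not strict monotonicity, and so would not rule out $\nabla F$ identifying two distinct points. The one-dimensional reduction above handles this cleanly, and openness of $\S^m_+$ guarantees the entire segment stays in the region where $F$ is differentiable, so all the derivatives used are well defined.
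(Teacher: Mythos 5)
Your proof is correct, but it reaches injectivity by a different (equally classical) route than the paper. You establish strict monotonicity of the gradient map, $\langle \nabla F(A)-\nabla F(B),A-B\rangle>0$ for $A\neq B$, by deriving the strict first-order inequality through a one-dimensional restriction and then symmetrizing; the care you take in justifying the \emph{strict} gradient inequality (via $g(1)-g(0)=\int_0^1 g'(t)\,\d t>g'(0)$ for the strictly increasing $g'$) is exactly the point that needs attention, and your handling of it is sound. The paper instead fixes a target $A\in\S^m$ and observes that any $\Sigma$ with $\nabla F(\Sigma)=A$ is a stationary point of the strictly concave, differentiable function $\langle A,\Sigma\rangle-F(\Sigma)$ on the open convex set $\S^m_+$, hence its unique maximizer; so there is at most one preimage. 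The paper's tilted-function argument has the advantage of connecting directly to the conjugate $F^*$ and the dual construction used later (it is essentially the same computation that shows $\nabla F$ and $\nabla F^*$ are mutually inverse), whereas your monotonicity argument is more self-contained and additionally delivers the quantitative inequality $\langle\nabla F(A)-\nabla F(B),A-B\rangle>0$, which is useful in its own right (e.g.\ for stability or uniqueness arguments elsewhere). Either proof is acceptable here.
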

 \begin{proof}
 	Fix $A\in \S^m$ and note that the function
 $\<A,\Sigma\>-F(\Sigma)$ is strictly concave and continuously differentiable on $\S^m_+$. Thus, if its maximum in $\S^m_+$ exists, it must be a stationary point and so it must satisfy $\nabla F(\Sigma)=A$. By strict convexity it must be the unique such point. This shows that for each $A\in \S^m$ there exists at most one point $\Sigma\in \S^m_+$ such that $\nabla F(\Sigma)=A$. 
 \end{proof} 
 
Our modelling technique is to apply the transformation $\nabla F(\Sigma)$ and impose convex restrictions on it. It is useful to denote
\begin{equation}\label{eq:Lplus}
\mathbb L^m_+\;:=\;\nabla F(\S^m_+).
\end{equation}
Lemma~\ref{lem:11} motivates the following definition.
 \begin{defn}[Linear Entropic Covariance Model]\label{def:entropicmodel}
 Fix an affine subspace $\cL\subseteq \S^m$. The corresponding {linear entropic model} is 
 $$
 M_F(\cL)\;:=\;\{\Sigma\in \S^m_+: \;\nabla F(\Sigma)\in \cL\}.
 $$
 More generally, we define $M_F(\cC)$ for any general closed convex constraints $\cC\subseteq \S^m$.
 \end{defn}
 
 \begin{rem}
 	As we argue in this paper, this definition unifies many well known examples and leads to some new ones. Not all proposed models have a clear statistical interpretation but they may still be useful. For example, the matrix logarithm allows us to regress the covariance matrix on external data \cite{chiu1996matrix}. So here $\cL$ will be not fixed by the modeller but will be generated by the external data. 
 \end{rem}

 In the definition we implicitly assumed that $M_F(\cL)$ is non-empty. This is a recurrent assumption of this paper.
 \begin{enumerate}
	\item [{\sc Assumption 1:}] The mapping $F\in \cE^m$ and the subspace $\cL\subseteq \S^m$ satisfy $\cL\cap \L^m_+\neq \emptyset$.
\end{enumerate}
 
 Some interesting examples of the function $F$ are given by popular matrix entropy functions like the negative Gaussian entropy (A) and the negative von Neumann entropy (C) (hence the name). More examples will be discussed later in the paper. For now, $F$ is relevant for us only through the mapping $\nabla F(\Sigma)$, which defines a suitable reparametrization of the covariance matrix. In the following examples, we refer to Proposition~\ref{prop:Fder} for a simple technique to compute $\nabla F(\Sigma)=\nabla \tr(\phi(\Sigma))$ by computing the derivative of $\phi$: 
 $$
 \mbox{if }F(\Sigma)=\tr(\phi(\Sigma))\qquad \mbox{then}\quad\nabla F(\Sigma)=\phi'(\Sigma).
 $$

  \begin{ex}[A]\label{ex:GEntr}
Since $-\log\det(\Sigma)=-\tr(\log(\Sigma))$, here $\phi_A(x)=-\log(x)$ if $x>0$ and so $\phi_A'(x)=-\tfrac1x$. By Proposition~\ref{prop:Fder}, $\nabla F_A(\Sigma)=\phi_A'(\Sigma)=-\Sigma^{-1}$. The model $M_{F_A}(\cL)$ is described by all $\Sigma\in \S^m_+$ such that $-\Sigma^{-1}\in \mathcal L$. Here $\L^m_+=-\S^m_+$. Models of this form are classically known in statistics as linear concentration models \cite{dempster1972covariance,anderson73}; see also the introduction for a more comprehensive literature overview. 
\end{ex}

Although linear restrictions on the inverse covariance matrix have many applications, there are important areas (e.g. signal processing) where it is more natural to impose linear symmetry restrictions directly on the covariance matrix. This leads to our second example.
\begin{ex}[B]\label{ex:Frob}
Here $\phi_B(x)=\tfrac12 x^2$ for $x>0$ and so $\phi'_B(x)=x$. This gives $\nabla F_B(\Sigma)=\phi_B'(\Sigma)=\Sigma$. The corresponding entropic model is given by all $\Sigma\in \S^m_+$ such that $\Sigma\in \cL$. Here $\L^m_+=\S^m_+$. This imposes linear restrictions on the covariance matrix as discussed in the introduction. This example can be generalized to the p-th Schatten norm of $\Sigma\in \S^m_+$, $\nabla F_{B,p}(\Sigma)=\Sigma^{p-1}$, which allows us to model linear restrictions on an arbitrary positive power of $\Sigma$. 
\end{ex}

This setting is however completely general and, as we argue below, it is a natural framework to discuss the generalized models for covariance matrices \cite{pourahmadi2000maximum,zou2017covariance,lin2023additive}. Example (C) again links to a model well studied in the literature.
\begin{ex}[C]\label{ex:vN}
We have $\phi_C(x)=-x(1-\log(x))$ for $x>0$ and so $\phi_C'(x)=\log(x)$. Thus, $\nabla F_C(\Sigma)=\phi'_C(\Sigma)=\log(\Sigma)$ and the model is given by all $\Sigma$ such that $\log(\Sigma)\in \cL$. This imposes linear restrictions on the matrix logarithm of the covariance matrix. Although such linear restrictions are hard to interpret statistically, one of the reasons, why this model is useful is because every matrix $L\in \S^m$ is a matrix logarithm of some $\Sigma\in \S^m_+$. In other words, $\L^m_+=\S^m$ and so $\log(\Sigma)$ can be suitably regressed on external information \cite{chiu1996matrix}. In the introduction we provide an extensive literature overview for this model. In Section~\ref{sec:mixed} and in Section~\ref{sec:other} we discuss  alternatives to this transformation with much better algebraic properties.
\end{ex}

 We discuss one more example whose relevance will be explained later. 
\begin{ex}[D]\label{ex:D1} In this example, we have $\phi_D(x)=\tfrac1x$ for $x>0$ and so $\phi_D'(x)=-\tfrac1x$. This gives $\nabla F_D(\Sigma)=\phi_D'(\Sigma)=- \Sigma^{-2}$ and the model is given by all $\Sigma\in \S^m_+$ such that $\Sigma^{-2}\in \cL$. This example has a straightforward generalization: $\nabla F_{D,p}(\Sigma)=-\Sigma^{-p-1}$ for any $p\geq 0$, which allows to impose linear restrictions on powers of $\Sigma^{-1}$. In Section~\ref{sec:other} we briefly motivate such models with zero restrictions. In Example~\ref{ex:sar} we motivate zero restrictions in $\Sigma^{-1/2}$, which, as we argue later, is a very closely related model.
\end{ex}

All our examples are summarized in Table~\ref{tab:1}. The gradient is provided in the second column and the other columns will be discussed in detail later. 
 
 \begin{table}
 	$$
 	\begin{array}{c|c|c|c}
 		F(\Sigma)&\nabla F(\Sigma) & F^*(L) & D_F(S,L)\\
 		\hline
 		-\log\det(\Sigma) & -\Sigma^{-1}& -m-\log\det(-L)& -\log\det((-L)S)+\tr((-L)S-I_m)\\[.2cm]
 		\tfrac12\|\Sigma\|_F^2 &  \Sigma & \tfrac12\tr(L^2)& \tfrac12\|L-S\|^2_F \\[.2cm]
 		\tr(\Sigma^{-1}) & -\Sigma^{-2} & -2\tr(\sqrt{-L})& \tr(S^{-1})-2\tr(\sqrt{-L})+\tr((-L)S)\\[.2cm]
 		-\tr(\Sigma-\Sigma\log(\Sigma)) & \log(\Sigma)& \tr(e^L)& -\tr(S-S\log(S))+\tr(e^L)-\tr(LS) \\[.2cm]
 		\tfrac1p\|\Sigma\|^p_p, p\geq 1 &  \Sigma^{p-1} & \tfrac1q\|L\|_q^q, \;q=\tfrac{p}{p-1}& \tfrac1p\|S\|^p_p+\tfrac{1}{q}\|L\|^q_q-\tr(LS) \\[.2cm]
 		\tfrac1p\|\Sigma^{-1}\|^p_p, p> 0 &  -\Sigma^{-p-1} & -\tfrac1q\|-L\|^q_q,\; q=\tfrac{p}{p+1}& \tfrac1p\|S^{-1}\|^p_p-\tfrac{1}{q}\|-L\|^{q}_q+\tr((-L)S) \\
 		\hline
 	\end{array}
 	$$
 	\caption{Our running examples with the corresponding gradients, conjugate functions, and the Bregman divergence.}\label{tab:1}
 \end{table}

\subsection{Dual construction of $M_F(\cL)$}\label{sec:dual} We note the following dual construction. Suppose $\pi:\mathbb S^m\to \R^d$, for some $d\geq 1$ is an affine function. In the spirit of (generalized) exponential families \cite{barndorff:78,dawidGEF}, we refer to $\pi$ as a sufficient statistics. Given $b\in \R^d$, consider the optimization problem
\begin{equation}\label{ex:OPT}
{\rm minimize}\;\; F(\Sigma)-\<A_0,\Sigma\>\qquad \mbox{subject to }\pi(\Sigma)=b,
\end{equation}
where $A_0\in \S^m$ is a fixed matrix.  If $F\in \cE^m$ then this problem has at most one optimal solution in $\S^m_+$. Since the set $\S^m_+\cap \{\Sigma: \pi(\Sigma)=b\}$ is relatively open (in the affine subspace $\{\Sigma: \pi(\Sigma)=b\}$), this optimal point $\widehat \Sigma$, if it exists, must satisfy the regular first order conditions: we must have  $\pi(\widehat \Sigma)=b$ and, for every $U\in \S^m$ such that $\pi(U)=0$, it must hold that $\<\nabla F(\widehat \Sigma)-A_0,U\>=0$, that is, the directional derivatives in all permitted directions must be zero.  In other words, 
\begin{equation}\label{eq:opt}
\nabla F(\Sigma)-A_0\;\in\;{\rm ker}(\pi)^\perp\;=:\;\cL_0. 
\end{equation}
Note that this equation and the affine space $\cL:=A_0+\cL_0$ do not depend on the vector $b$ and so the condition $\nabla F(\Sigma)\in \cL$ describes all such potential optimizers. 
In fact, a point $\widehat \Sigma\in\S^m_+$ solves \eqref{ex:OPT} for some $b\in \R^d$ if and only if $\nabla F(\widehat \Sigma)\in \cL$. Indeed, if $\nabla F(\widehat \Sigma)\in \cL$ then take $b:=\pi(\widehat \Sigma)$ so that  $\widehat \Sigma$ is an optimum of \eqref{ex:OPT} for this $b$.

If $\cL$ is given by zero restrictions on some coordinates, we get an explicit link to positive definite completion problems. 

\begin{ex}[Positive definite completion] Fix a graph $G$ on $m$ nodes and edge set $E$. We allow $G$ to have self-loops that is edges from $i$ to $i$. Let $\pi:\S^m\to \R^{|E|}$ be given by
$\pi(\Sigma)\;=\;((\Sigma_{ij})_{ij\in E})$. In this case $\ker(\pi)$ is the set of symmetric matrices with zeros on the entries corresponding to the edges of $G$. Thus, $\cL_0$ is the set of symmetric matrices with zero entries for all non-edges of $G$: 
$\cL_0\;=\;\{L\in \S^m:\;\;L_{ij}=0\mbox{ if }ij\notin E\}$.  
Given $S\in \S^m_+$, the solution to \eqref{eq:opt} with $b:=\pi(S)$ and $A_0=0$ is the unique matrix $\widehat \Sigma$ that agrees with $S$ on all the entries $ij\in E$ and such that $\widehat L=\nabla F(\widehat \Sigma)$ is zero on the complementary entries.
\end{ex}


\section{The Bregman estimator}\label{sec:breg}

Consider data $X_1,\ldots,X_n$ from a centered distribution with a covariance matrix $\Sigma_0$ in an entropic model $M_F(\cL)$. Throughout the paper we assume that the true covariance matrix $\Sigma_0\in \S^m_+$ lies in the given model.
\begin{enumerate}
	\item [{\sc Assumption 2:}] $\nabla F(\Sigma_0)\in \cL\cap \L^m_+$.  \end{enumerate}
We can estimate the covariance matrix using the Gaussian log-likelihood \eqref{eq:GLL}. In the case of non-Gaussian data, this log-likelihood is considered as one of the suitable loss functions. This gives an asymptotically statistically optimal procedure as long as all fourth order cumulants of the underlying distribution vanish \cite{browne1974generalized}. For the linear concentration model in Example~\ref{ex:GEntr}  this approach is canonical not only because it leads to an efficient estimator but also because it requires solving a convex optimization problem. Indeed, the Gaussian log-likelihood in \eqref{eq:GLL} is a strictly concave function in $K=\Sigma^{-1}$.

The problem for the general entropic model $M_F(\cL)$ is that optimizing the Gaussian log-likelihood may be quite complicated since the linear constraints in $L=\nabla F(\Sigma)$ translate into non-linear constraints in $K$. This observation has motivated a lot of research on the estimation of the linear covariance models. One valid solution is to use the dual MLE, which provides an efficient alternative to the MLE \cite{christensen:89,kauermann1996dualization,lauritzen2022locally,amendola2021likelihood}. The least squares estimator or generalized least squares estimator has also been used \cite{browne1974generalized}.

\subsection{Definition of the Bregman estimator}
In this section, we propose an estimation procedure for linear entropic models, which offers a natural alternative to the MLE. It generalizes the use of Gaussian likelihood for linear concentration models and the least squares estimation for linear covariance models. An important ingredient of our statistical analysis is the Bregman divergence:
\begin{equation}
D_F(S,\Sigma)\;=\;F(S)-F(\Sigma)-\<\nabla F(\Sigma),S-\Sigma\>,\qquad \Sigma\in \S^m_+, S\in \S^m.
\end{equation}
Note that one of the characterizations of strict convexity for differentiable functions over $\S^m_+$ assures that $D_F(S,\Sigma)\geq 0$ for all $\Sigma\in \S^m_+, S\in \S^m$ with equality if and only if $S=\Sigma$.

Let $S_n$ be the sample covariance matrix defined in \eqref{eq:GLL}. Our proposed estimator is obtained by minimizing the Bregman divergence $D_F(S_n,\Sigma)$ over the entropic model $M_F(\cL)$.  
\begin{defn}
	The Bregman estimator $\widehat \Sigma$ (if it exists) is the global minimizer of $D_F(S_n,\Sigma)$ subject to $\nabla F(\Sigma)\in \cL\cap \L^m_+$. 
\end{defn}

There are two crucial aspects regarding the underlying optimization problem that we will formally state later in this section. Firstly, in Section~\ref{sec:conv}, we demonstrate that $D_F(S_n,\Sigma)$ is a strictly convex function of $L=\nabla F(\Sigma)\in \L^m_+$. Secondly, in Theorem~\ref{th:optexist}, we establish that if the gradient of $F(\Sigma)$ diverges when $\Sigma$ approaches the boundary of $\S^m_+$, then the optimum always exists whenever $S_n\in \S^m_+$. In such cases, there is no explicit need to impose the restriction $\nabla F(\Sigma)\in \L^m_+$ as first-order optimization methods will naturally remain within $\S^m_+$. Before formally proving these assertions, we will examine some examples.

\begin{ex}[A]
In the Gaussian entropy example we have
$$
D_{F_A}(S_n,\Sigma)\;=\;-\log\det(S_n\Sigma^{-1})+\tr(S_n\Sigma^{-1}-I_m),
$$
which is just the standard Kullback-Leibler divergence between two mean zero Gaussian distributions, with covariances $S_n$ and $\Sigma$ respectively. \end{ex}

\begin{ex}[B]
	In the Frobenius norm example we have
	$$
	D_{F_B}(S_n,\Sigma)\;=\;F_B(S_n)-F_B(\Sigma)-\<\Sigma,S_n-\Sigma\>\;=\;\tfrac12 \|\Sigma-S_n\|^2_F.
	$$
	Thus, minimizing $D_{F_B}(S_n,\Sigma)$ over $\Sigma\in \cL$ simply gives the orthogonal projection of $S_n$ on $\cL$ if this projection is positive definite. \end{ex}

The next example proposes a new way of estimating parameters in models that are linear in $\log(\Sigma)$. This provides an alternative to the maximum likelihood estimation considered in \cite{chiu1996matrix}.

\begin{ex}[C]
In the von Neumann case we have
\begin{eqnarray*}
D_{F_C}(S_n,\Sigma)&=&-\tr(S_n-S_n\log(S_n))+\tr(\Sigma-\Sigma\log(\Sigma))-\<\log(\Sigma),S_n-\Sigma\>\\
&=&	-\tr(S_n-S_n\log(S_n))+\tr(\Sigma-S_n\log \Sigma).
\end{eqnarray*}
 \end{ex}

The next example provides some curious connections with the empirical score matching loss. 

\begin{ex}[D]\label{ex:D2}
In our last example given by $F_D(\Sigma)=\tr(\Sigma^{-1})$, we have
	\begin{eqnarray*}
	D_{F_D}(S_n,\Sigma)&=&\tr(S_n^{-1})-\tr(\Sigma^{-1})+\<\Sigma^{-2},S_n-\Sigma\>	\\
	&=& \tr(S_n^{-1}) -2\tr(\Sigma^{-1}-\Sigma^{-1}S_n\Sigma^{-1}).
	\end{eqnarray*}
	This function is convex in $\Sigma^{-1}$ and it corresponds to the empirical score matching loss of \cite{lin2016estimation}. In the next section we also argue that this function is convex in $\Sigma^{-2}$.
\end{ex}

\subsection{Convexity of the underlying optimization problem}\label{sec:conv}

We next analyze $D_F(S_n,\Sigma)$ as a function of $L=\nabla F(\Sigma)$. The conjugate of $F(\Sigma)$ is
\begin{equation}\label{eq:CD}
    F^*(L)\;:=\;\sup_{\Sigma\in \mathbb S^m} \{\<\Sigma,L\>-F(\Sigma)\}.	
\end{equation}
The third column of Table~\ref{tab:1} contains the convex conjugates for our leading examples. Note that we use notation $\|L\|_q$ introduced in \eqref{eq:schatten} also if $q<1$, in which case this is formally not a norm. Explicit calculations for spectral functions could be done by utilizing Theorem~2.3 in \cite{lewis1996convex}. For the special case of spectral sums we use Lemma~\ref{lem:spectrlcconv}: if $F(\Sigma)=\tr(\phi(\Sigma))$ then $F^*(L)=\tr(\phi^*(L))$.
    
     In the next proposition we collect several known results.
\begin{prop}\label{prop:Fstar}
	If $F\in \cE^m$ then (a) $F^*\in \cconv(\S^m)$, (b) $F^*$ is continuously differentiable on ${\rm int}\dom(F^*)$, (c) $\L^m_+\subseteq \dom(F^*)$, (d) $\nabla F$ and $\nabla F^*$ are inverses of each other on $\S^m_+$ and $\L^m_+$. 
\end{prop}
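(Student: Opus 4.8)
The plan is to prove the four claims (a)--(d) by invoking the standard machinery of convex conjugacy (Fenchel--Legendre duality) for closed proper convex functions, specializing the abstract results to the structure guaranteed by $F \in \cE^m$. The key background facts I would use are: (i) the conjugate of any proper function is always closed and convex; (ii) for a closed proper convex function, biconjugation gives $F^{**}=F$; (iii) the Fenchel--Young inequality $F(\Sigma)+F^*(L)\geq \<\Sigma,L\>$ holds with equality if and only if $L\in\partial F(\Sigma)$; and (iv) for an essentially strictly convex, essentially smooth (i.e.\ Legendre type) function, the gradient $\nabla F$ is a bijection from $\operatorname{int}\dom(F)$ onto $\operatorname{int}\dom(F^*)$ with inverse $\nabla F^*$. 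My first step would be to verify that $F\in\cE^m$ makes $F$ a convex function of Legendre type in the sense of Rockafellar, so that (iv) applies directly.

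Concretely, I would proceed as follows. For part (a), closedness and convexity of $F^*$ are immediate from the general theory, since $F^*$ is defined as a pointwise supremum of the affine functions $L\mapsto \<\Sigma,L\>-F(\Sigma)$ and is therefore closed and convex regardless of the properties of $F$; properness follows because $F\in\cconv(\S^m)$ is itself proper and closed, so $F^*$ is proper as well. For parts (b) and (d), the main point is to check that $F$ is a Legendre function: condition (iii) in the definition of $\cE^m$ gives strict convexity and differentiability on $\S^m_+=\operatorname{int}\dom(F)$, which is exactly essential strict convexity, and essential smoothness will follow because $\nabla F$ blows up along every sequence approaching the boundary of $\S^m_+$ (this is where I would either invoke a boundary-divergence property of $F$ or argue it from lower semicontinuity together with strict convexity). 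Granting that $F$ is Legendre, Rockafellar's theorem (Theorem~26.5 in \emph{Convex Analysis}) yields simultaneously that $F^*$ is also Legendre, hence differentiable on $\operatorname{int}\dom(F^*)$ (part (b)), and that $\nabla F:\S^m_+\to\operatorname{int}\dom(F^*)$ is a bijection with inverse $\nabla F^*$. Since by definition $\L^m_+=\nabla F(\S^m_+)$, this image equals $\operatorname{int}\dom(F^*)$, which simultaneously establishes part (d) and the inclusion $\L^m_+\subseteq\dom(F^*)$ of part (c).

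For part (c) I would note that $\L^m_+=\operatorname{int}\dom(F^*)\subseteq\dom(F^*)$ is then automatic, but if one wants a self-contained argument not relying on identifying $\L^m_+$ with the full interior, one can argue directly: for any $L=\nabla F(\Sigma)$ with $\Sigma\in\S^m_+$, the Fenchel--Young inequality holds with equality, so $F^*(L)=\<\Sigma,L\>-F(\Sigma)<+\infty$, placing $L$ in $\dom(F^*)$. This last observation is the cleanest route and avoids any subtlety about whether the image of the gradient exhausts the interior of the conjugate's domain.

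The step I expect to be the main obstacle is establishing that $F$ is \emph{essentially smooth} (equivalently, that $F$ is of Legendre type), since the definition of $\cE^m$ only explicitly asserts strict convexity and $C^1$ smoothness on the open cone $\S^m_+$, together with the domain condition $\operatorname{int}\dom(F)=\S^m_+$ and lower semicontinuity. Essential smoothness additionally requires that $\|\nabla F(\Sigma_k)\|_F\to\infty$ whenever $\Sigma_k\to\partial\S^m_+$, and this is a genuine boundary condition that does not follow from strict convexity alone. I would handle this either by recording it as a standing hypothesis already implicit in the running examples (all of which, being spectral sums $\tr(\phi(\Sigma))$ with $\phi'$ diverging at the boundary of the eigenvalue range, are manifestly Legendre) or by proving the gradient-divergence property from the structure of spectral sums via the results in Appendix~\ref{sec:spec}. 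If essential smoothness is not assumed, parts (b) and (d) must be stated on $\operatorname{int}\dom(F^*)$ and the bijection in (d) is between $\S^m_+$ and $\operatorname{int}\dom(F^*)=\L^m_+$, which is exactly what the proposition claims; so the cleanest presentation is to confirm the Legendre property up front and then let the Rockafellar duality theorem deliver (b)--(d) in one stroke.
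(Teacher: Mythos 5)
Your arguments for (a) and (c) are fine and essentially coincide with the paper's: (a) is the standard "pointwise supremum of affine functions" fact, and (c) is the Fenchel--Young equality at $(\Sigma,\nabla F(\Sigma))$, which is exactly the paper's observation that $\Sigma$ is the interior stationary point, hence the maximizer, of the concave function $\<\cdot,L\>-F(\cdot)$.

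The gap is in your route to (b) and (d). You propose to verify that every $F\in\cE^m$ is of Legendre type and then invoke Rockafellar's Theorem~26.5, claiming in particular that $\nabla F$ is a bijection from $\S^m_+$ onto ${\rm int}\,\dom(F^*)$ and hence that $\L^m_+={\rm int}\,\dom(F^*)$. But essential smoothness is \emph{not} a consequence of the definition of $\cE^m$, and it genuinely fails for members of the class: the paper's running example (B), $F_B(\Sigma)=\tfrac12\tr(\Sigma^2)$, lies in $\cE^m$ yet $\nabla F_B(\Sigma)=\Sigma$ extends continuously to the boundary of $\overline\S^m_+$, so $F_B$ is not essentially smooth (the paper says this explicitly when introducing essential smoothness). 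For this example one computes $\dom(F_B^*)=\S^m$ while $\L^m_+=\S^m_+$, so your identity $\L^m_+={\rm int}\,\dom(F^*)$ is false, and both of your proposed repairs --- adding essential smoothness as a standing hypothesis, or deriving it from the spectral-sum structure --- would either shrink the class $\cE^m$ or fail outright. The correct mechanism, which the paper uses, is the duality between \emph{strict convexity} of $F$ and \emph{smoothness} of $F^*$: strict convexity of $F$ on ${\rm int}\,\dom(F)=\S^m_+$ already forces $F^*$ to be continuously differentiable on ${\rm int}\,\dom(F^*)$ (Theorem~E.4.1.1 in Hiriart-Urruty--Lemar\'echal), with no boundary-divergence condition on $\nabla F$; and (d) then follows from the subdifferential inversion rule $L\in\partial F(\Sigma)\Leftrightarrow\Sigma\in\partial F^*(L)$ together with the fact that strict convexity of $F$ makes $\partial F^*(L)$ a singleton for $L\in\L^m_+$ (Corollary~E.1.4.4 there). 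The Legendre machinery you invoke is reserved in the paper for Proposition~\ref{prop:legendre}, where essential smoothness is an explicit additional hypothesis and where the stronger conclusion ${\rm int}\,\dom(F^*)=\L^m_+$ does hold.
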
    
    \begin{proof}
    Statements (a), (b) follow from Theorem~E.1.1.2,  Theorem~E.4.1.1 in \cite{convex}. To prove (c), note that if $S\in \S^m_+$ and $L=\nabla F(S)$ then $\<\Sigma,L\>-F(\Sigma)$ is strictly concave and well-defined over $\S^m_+$. Since $S$ is a stationary point it must be the optimum and so $F^*(L)<\infty$. Now (d) follows from Corollary~E.1.4.4 in \cite{convex}.
    \end{proof}


Applying Proposition~3.2 in \cite{bauschke1997legendre}, we get that for all $S\in \S^m$ and $\Sigma\in \S^m_+$
$$
D_F(S,\Sigma)\;=\;F(S)+F^*(\nabla F(\Sigma))-\<\nabla F(\Sigma),S\>.
$$
Let $L=\nabla F(\Sigma)$, or equivalently by Proposition~\ref{prop:Fstar}(d), $\Sigma=\nabla F^*(L)$. Slightly abusing notation, from now on, we will write $D_F(S,L)$ to refer to $D_F(S,\nabla F^*(L))$.	This notation is also used in the last column of Table~\ref{tab:1}, where the corresponding Bregman divergences computed above are expressed in terms of $L$. We thus have,
\begin{equation}\label{eq:DFinL}
D_F(S,L)\;=\;F(S)+F^*(L)-\<L,S\>.	
\end{equation}
In particular, $D_F(S,L)$ is strictly convex and differentiable both in $S\in \S^m_+$ and $L\in \L^m_+$.


If the sample covariance matrix $S_n$ is not positive definite, the divergence $D_F(S_n,L)$ may not be finite. In analogy to the log-likelihood function, we  solve instead 
\begin{equation}\label{eq:mainmax}
{\rm maximize}\quad \g_n(L)\;:=\;-F^*(L)+\<L,S_n\>\quad\mbox{subject to }\quad L\in \cL\cap \L^m_+.    
\end{equation}
\noindent We easily see that the gradient of $\g_n$ is well defined on $\L^m_+$ and
\begin{equation}\label{eq:nablag}
\nabla \g_n(L)=-\nabla F^*(L)+S_n.
\end{equation}
The KKT conditions are easy to obtain.
\begin{thm}\label{th:KKTg}Suppose that  $\L^m_+$ is open. The optimum in \eqref{eq:mainmax}, if it exists,  is uniquely given by the pair $(\widehat \Sigma,\widehat L)\in \S^m_+\times \L^m_+$ with $\widehat L=\nabla F(\widehat \Sigma)$ satisfying 
\begin{equation}\label{eq:KKTL}
\widehat L\in \cL\quad\mbox{and}\quad \widehat \Sigma-S_n\in \cL^\perp.	
\end{equation}
\end{thm}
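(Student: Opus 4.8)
The plan is to treat \eqref{eq:mainmax} as a concave maximization over a \emph{relatively open} set, so that the Karush--Kuhn--Tucker system collapses to a plain stationarity condition and no inequality multipliers survive. First I would record, using \eqref{eq:DFinL}, that
$\g_n(L) = -F^*(L) + \langle L,S_n\rangle = F(S_n) - D_F(S_n,L)$,
so that maximizing $\g_n$ over $\cL\cap\L^m_+$ is the same as minimizing $D_F(S_n,\cdot)$ there. Since $D_F(S_n,\cdot)$ is strictly convex on $\L^m_+$ (noted right after \eqref{eq:DFinL}), the objective $\g_n$ is strictly concave on $\L^m_+$. Hence a maximizer over the convex set $\cL\cap\L^m_+$, if one exists, is unique; this already secures the uniqueness claim once the first-order conditions identify the point.

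Next I would exploit the hypothesis that $\L^m_+$ is open. Writing $\cL = A_0 + \cL_0$ with $\cL_0$ the parallel linear subspace, the feasible set $\cL\cap\L^m_+$ is open in the subspace topology of $\cL$, hence relatively open. Therefore a maximizer $\widehat L$ has a full $\cL$-neighborhood on which $\g_n$ is defined and differentiable (by Proposition~\ref{prop:Fstar}(b), as $\widehat L\in\L^m_+\subseteq{\rm int}\,\dom(F^*)$), and the directional derivative of $\g_n$ along every $U\in\cL_0$ must vanish. Using the gradient formula \eqref{eq:nablag}, this reads $\langle -\nabla F^*(\widehat L)+S_n,\,U\rangle = 0$ for all $U\in\cL_0$, i.e. $S_n-\nabla F^*(\widehat L)\in\cL_0^\perp=:\cL^\perp$. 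Setting $\widehat\Sigma := \nabla F^*(\widehat L)$ and invoking Proposition~\ref{prop:Fstar}(d) to obtain $\widehat\Sigma\in\S^m_+$ with $\widehat L=\nabla F(\widehat\Sigma)$, this is precisely $\widehat\Sigma-S_n\in\cL^\perp$, while feasibility gives $\widehat L\in\cL$. This shows any optimum satisfies \eqref{eq:KKTL}.

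For the converse I would argue that, because $\g_n$ is concave and differentiable on the relatively open feasible set, any point satisfying the stationarity condition $\nabla\g_n(\widehat L)\in\cL^\perp$ is automatically a global maximizer, and strict concavity forces it to be the unique one; thus the pair $(\widehat\Sigma,\widehat L)$ obeying \eqref{eq:KKTL} is exactly the optimum, and the $\Sigma$-side equalities transfer via $\nabla F^*=(\nabla F)^{-1}$ from Proposition~\ref{prop:Fstar}(d).

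The one point demanding care --- and the only place the openness of $\L^m_+$ is genuinely used --- is the reduction of the full KKT system to the single stationarity condition $\nabla\g_n(\widehat L)\in\cL^\perp$: relative openness of $\cL\cap\L^m_+$ is what guarantees there are no active inequality constraints and hence no complementary-slackness multipliers, so the boundary behavior of $F^*$ never intervenes. I expect the main friction to be purely notational bookkeeping between the affine set $\cL$, its direction space $\cL_0$, and the complement $\cL^\perp$, together with the repeated passage between the $L$-picture and the $\Sigma$-picture through the conjugacy duality of Proposition~\ref{prop:Fstar}(d).
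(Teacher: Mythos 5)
Your proposal is correct and follows essentially the same route as the paper: openness of $\L^m_+$ makes the feasible set $\cL\cap\L^m_+$ relatively open, so optimality is equivalent to the stationarity condition $\nabla\g_n(\widehat L)=S_n-\widehat\Sigma\in\cL^\perp$, with $\widehat\Sigma=\nabla F^*(\widehat L)$ supplied by Proposition~\ref{prop:Fstar}(d). The paper compresses this into a single sentence; your added detail on uniqueness via strict concavity and on the converse direction is a correct unpacking of the same argument rather than a different proof.
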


\begin{proof}
    First note that  $\nabla F^*(\widehat L)=\widehat \Sigma$ by Proposition~\ref{prop:Fstar}(d). Since $\L^m_+$ is open,  $\cL\cap \L^m_+$ is relatively open, and so $\widehat L\in \cL\cap \L^m_+$ is optimal if and only if the gradient $\nabla\g_n(\widehat L)=-\widehat\Sigma+S_n$ is orthogonal to $\mathcal L$. 
\end{proof}

In certain situations, it may be easier to solve the dual problem. 

\begin{prop}\label{cor:dual}
Let $A_0\in \cL$. The dual problem to \eqref{eq:mainmax} is
\begin{equation}\label{eq:dual}
{\rm minimize}\;\; F(\Sigma)-\<A_0,\Sigma\>\qquad \mbox{subject to}\qquad \Sigma-S_n\in \cL^\perp, \;\Sigma\in \S^m_+.   
\end{equation}
\end{prop}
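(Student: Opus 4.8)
The plan is to obtain \eqref{eq:dual} as the Lagrangian dual of \eqref{eq:mainmax}, dualizing only the affine constraint $L\in\cL$ while keeping the cone constraint $L\in\L^m_+$ inside the inner problem. Write $\cL=A_0+\cL_0$ with $\cL_0$ the linear direction of $\cL$, so that $\cL^\perp=\cL_0^\perp$ and $L\in\cL$ iff $\<\mu,L-A_0\>=0$ for every $\mu\in\cL^\perp$. Since $\inf_{\mu\in\cL^\perp}\<\mu,L-A_0\>$ equals $0$ when $L\in\cL$ and $-\infty$ otherwise, the value of \eqref{eq:mainmax} can be written as
$$
p^\star\;=\;\sup_{L\in\L^m_+}\ \inf_{\mu\in\cL^\perp}\ \bigl\{-F^*(L)+\<L,S_n\>+\<\mu,L-A_0\>\bigr\}.
$$

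First I would interchange the supremum and the infimum to pass to the dual, and substitute $\Sigma:=S_n+\mu$, so that $\mu=\Sigma-S_n$ and $\Sigma$ ranges over $\{\Sigma:\Sigma-S_n\in\cL^\perp\}$. The inner problem then reads $\sup_{L\in\L^m_+}\{\<L,\Sigma\>-F^*(L)\}-\<\Sigma-S_n,A_0\>$. For $\Sigma\in\S^m_+$ the inner supremum is a strictly concave differentiable program whose stationary point is $L=\nabla F(\Sigma)\in\L^m_+$ by Proposition~\ref{prop:Fstar}(d); evaluating there and using the Fenchel equality $\<\nabla F(\Sigma),\Sigma\>-F^*(\nabla F(\Sigma))=F(\Sigma)$ shows that the inner supremum equals $F(\Sigma)$ and is attained precisely on $\S^m_+=\nabla F^*(\L^m_+)$. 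Dropping the additive constant $\<S_n,A_0\>$ then yields exactly the objective $F(\Sigma)-\<A_0,\Sigma\>$ subject to $\Sigma-S_n\in\cL^\perp$ and $\Sigma\in\S^m_+$, which is \eqref{eq:dual}.

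The main obstacle is to make the sup–inf interchange and the attainment of the inner supremum rigorous: the interchange requires a constraint qualification, which I would supply from Assumption~1 (and the relative-interior point it provides in $\cL\cap\L^m_+$) together with the closedness and strict convexity of $F$, giving strong duality $p^\star=d^\star$ and explaining why the dual variable $\Sigma$ is effectively confined to $\S^m_+$. As an independent cross-check that does not rely on the interchange, I would compute the first-order conditions of \eqref{eq:dual} directly: since its feasible set is relatively open in the affine space $S_n+\cL^\perp$ and its objective is strictly convex and differentiable on $\S^m_+$, a point $\widehat\Sigma$ is optimal iff $\<\nabla F(\widehat\Sigma)-A_0,U\>=0$ for all $U\in\cL^\perp$, i.e. $\nabla F(\widehat\Sigma)-A_0\in(\cL^\perp)^\perp=\cL_0$, equivalently $\widehat L:=\nabla F(\widehat\Sigma)\in\cL$, together with feasibility $\widehat\Sigma-S_n\in\cL^\perp$. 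These are precisely the conditions \eqref{eq:KKTL} characterizing the optimum of \eqref{eq:mainmax} in Theorem~\ref{th:KKTg}, confirming that \eqref{eq:mainmax} and \eqref{eq:dual} share the unique optimal pair $(\widehat\Sigma,\widehat L)$ and are dual to one another.
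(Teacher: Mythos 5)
Your proposal is correct, but it is worth separating its two strands. Your ``independent cross-check'' --- verifying that the first-order conditions of \eqref{eq:dual} over the relatively open set $(S_n+\cL^\perp)\cap\S^m_+$ read $\widehat\Sigma-S_n\in\cL^\perp$ and $\nabla F(\widehat\Sigma)-A_0\in(\cL^\perp)^\perp$, and that with $A_0\in\cL$ this is exactly \eqref{eq:KKTL} --- is, word for word, the paper's entire proof. The paper never performs a Lagrangian duality argument; it interprets ``dual problem'' loosely as ``problem with the same stationarity characterization, hence the same optimal pair $(\widehat\Sigma,\widehat L)$.'' So the part of your write-up that actually closes the proof is the part you relegated to a sanity check, and it is complete as written.

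Your primary route, the genuine Lagrangian derivation, is a more ambitious and more informative approach (it would justify the word ``dual'' in the formal sense), but the gap you flag is real and in one respect worse than you suggest. Beyond the sup--inf interchange, the claim that the inner supremum $\sup_{L\in\L^m_+}\{\<L,\Sigma\>-F^*(L)\}$ equals $F(\Sigma)$ and confines $\Sigma$ to $\S^m_+$ relies on $\L^m_+$ being all of ${\rm int}(\dom F^*)$, which holds when $F$ is essentially smooth but fails in general: for $F_B$ one has $\L^m_+=\S^m_+$ while $\dom F_B^*=\S^m$, so the inner supremum is finite even for indefinite $\Sigma$ and the Lagrangian dual does not literally reduce to \eqref{eq:dual} --- consistent with Example~\ref{ex:B13}, where the stationary point of \eqref{eq:dual} escapes $\S^m_+$. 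The constraint $\Sigma\in\S^m_+$ in \eqref{eq:dual} is imposed by hand precisely to sidestep this, and the paper's (and your cross-check's) purely first-order argument is the route that works uniformly for all $F\in\cE^m$.
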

\begin{proof}
This is a convex problem over a relatively open set $(S_n+\cL^\perp)\cap \S^m_+$. The optimum, if it exists, must be a stationary point: $\widehat\Sigma\in \S^m_+$, $\widehat \Sigma-S_n\in \cL^\perp$ and $\nabla F(\widehat \Sigma)-A_0\in (\cL^\perp)^\perp$. Note that  $\cL^\perp	$ is the linear space orthogonal to the affine space $\cL$ and so $(\cL^\perp)^\perp$ is the linear space parallel to $\cL$. Since $A_0\in \cL$ we recover the condition $\widehat L=\nabla F(\widehat \Sigma)\in \cL\cap \L^m_+$. This is exactly the same as \eqref{eq:KKTL}, which proves that the problem in \eqref{eq:dual} is equivalent to the problem in \eqref{eq:mainmax}.
\end{proof}

\subsection{The advantageous essentially smooth case}

The geometry of Bregman divergence optimization is discussed in detail by \cite{bauschke1997legendre}. We review now some of this theory to study the problem of existence of the Bregman estimator. These results will be used later in Section~\ref{sec:mixed} in our study of mixed parametrizations. 	We start with some standard definitions. 
	\begin{defn}
		A function $F\in \cconv(\S^m)$ with ${\rm int}(\dom(F))\neq \emptyset$ is called essentially smooth if it is differentiable on ${\rm int}(\dom(F))$ and $\|\nabla F(\Sigma_k)\|_F\to \infty$ for any sequence $(\Sigma_k)$ in ${\rm int}(\dom(F))$ that converges to the boundary of $\dom(F)$. 
	\end{defn}
	In our leading examples, the functions $F_A$, $F_C$, and $F_D$ are essentially smooth but $F_B$ is not because $\nabla F_B(\Sigma)=\Sigma$ is well defined also on the boundary of $\S^m_+$. In example (C) essential smoothness is quite surprising because the function $F_C$ itself extends to $\overline\S^m_+$ (see Appendix~\ref{sec:runex}).
	
\begin{defn}
A function $F\in \cconv(\S^m)$ is called Legendre if: (i) $F$ is essentially smooth, (ii) $F$ is strictly convex and differentiable on ${\rm int}(\dom(F))$. \end{defn}

Note that if $F\in \cE^m$ then condition (ii) is automatically satisfied.  Legendre functions are particularly well behaved for our purposes. 
\begin{prop}\label{prop:legendre}
	A convex function $F$ is Legendre if and only if  $F^*$ is Legendre.  In this case, the gradient mapping $\nabla F$ is an isomorphism between ${\rm int}(\dom(F))$ and ${\rm int}(\dom(F))$. In particular, if $F\in \cE^m$ is essentially smooth, then it is Legendre and  	
	 ${\rm int}(\dom(F^*))=\L^m_+$.
\end{prop}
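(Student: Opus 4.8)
The plan is to recognize this as the specialization to the cone $\S^m_+\subseteq\S^m$ of the classical self-duality of Legendre functions developed in \cite{bauschke1997legendre} (see also \cite{convex}); all three assertions then follow by unwinding the definitions and combining them with \eqref{eq:Lplus}. The conceptual engine is the conjugacy duality between the two properties whose conjunction defines ``Legendre'': essential smoothness and strict convexity on the interior of the domain.

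First I would record the dual characterization valid on the finite-dimensional space $\S^m$: for a closed proper convex function, essential smoothness of $F$ is equivalent to strict convexity of $F^*$ on ${\rm int}\dom(F^*)$, and symmetrically strict convexity of $F$ on ${\rm int}\dom(F)$ is equivalent to essential smoothness of $F^*$; this is the heart of the Legendre-transform theory in \cite{bauschke1997legendre}. Since $F$ is Legendre exactly when it is \emph{both} essentially smooth \emph{and} strictly convex on ${\rm int}\dom(F)$, applying the two equivalences shows that $F$ Legendre forces $F^*$ to be both essentially smooth and strictly convex on its interior domain, i.e. $F^*$ Legendre; the converse is identical via $F^{**}=F$. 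For the accompanying isomorphism I would invoke the standard fact (already foreshadowed in Proposition~\ref{prop:Fstar}(d)) that for Legendre $F$ the gradient $\nabla F$ is a bijection from ${\rm int}\dom(F)$ onto ${\rm int}\dom(F^*)$ with continuous inverse $\nabla F^*$, essential smoothness being precisely what prevents an interior point from being mapped to the boundary.

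For the special case I would argue directly. If $F\in\cE^m$ then, by definition of $\cE^m$, we have ${\rm int}\dom(F)=\S^m_+$ and $F$ is strictly convex and continuously differentiable on $\S^m_+$; adding the hypothesis that $F$ is essentially smooth supplies the remaining defining property, so $F$ is Legendre. The isomorphism part then gives $\nabla F({\rm int}\dom(F))={\rm int}\dom(F^*)$, and since ${\rm int}\dom(F)=\S^m_+$ and $\L^m_+:=\nabla F(\S^m_+)$ by \eqref{eq:Lplus}, we conclude ${\rm int}\dom(F^*)=\L^m_+$.

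The main obstacle here is bookkeeping rather than mathematics: one must verify that the paper's formulations of ``essentially smooth'' (gradient blow-up at the boundary of $\dom(F)$) and ``Legendre'' (essential smoothness together with strict convexity and differentiability on ${\rm int}\dom(F)$) coincide with the subdifferential-based definitions used in \cite{bauschke1997legendre}; this holds because on the finite-dimensional space $\S^m$ the two formulations agree for differentiable proper closed convex functions. One should also read the isomorphism as a map between ${\rm int}\dom(F)$ and ${\rm int}\dom(F^*)$, correcting the evident typo in the statement.
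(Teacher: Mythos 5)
Your proposal is correct and follows essentially the same route as the paper: the paper's proof simply cites Theorem~26.5 of \cite{rockafellar1970convex} for the Legendre self-duality and the gradient bijection between ${\rm int}(\dom(F))$ and ${\rm int}(\dom(F^*))$, and then obtains the special case exactly as you do, from ${\rm int}(\dom(F))=\S^m_+$ and the definition $\L^m_+=\nabla F(\S^m_+)$. The only caveat is that your separate equivalences should really be phrased in terms of \emph{essential} strict convexity (strict convexity on convex subsets of $\dom(\partial F)$) rather than strict convexity on ${\rm int}(\dom(F^*))$ alone, but for the conjunction defining a Legendre function the two notions coincide, so the argument stands.
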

\begin{proof}The first part follows from Theorem~26.5 in \cite{rockafellar1970convex}. For the second part, note that if $F\in \cE^m$ then ${\rm int}(\dom(F))=\S^m_+$ and $\L^m_+=\nabla F(\S^m_+)$ by definition. The second part follows then from the first part.	
\end{proof}

Importance of Proposition~\ref{prop:legendre} will be illustrated in this and the following sections. We start with a basic result on existence of the Bregman estimator. For a given affine subspace $\cL\subseteq \S^m$ define by $(\S^m_+)_\cL$ the image of the orthogonal projection of $\S^m_+$ onto $\cL$. Similarly, for a given $S\in \S^m$ denote by $S_\cL$ the orthogonal projection of $S$ on $\cL$.
\begin{thm}\label{th:optexist}
	Suppose $F\in \cE^m$ is essentially smooth. Then the optimum of $\g_n(L)$ over $L\in \cL\cup \L^m_+$ exists if and only if $(S_n)_\cL$ lies in  $(\S^m_+)_\cL$.
\end{thm}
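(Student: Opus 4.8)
The plan is to pass to the dual formulation of Proposition~\ref{cor:dual} and to read the stated condition as a feasibility statement. The dual of \eqref{eq:mainmax} minimizes $F(\Sigma)-\<A_0,\Sigma\>$ over $C:=(S_n+\cL^\perp)\cap\S^m_+$, and by Proposition~\ref{cor:dual} the primal and dual optima exist simultaneously, so it suffices to decide when the dual optimum is attained. The first thing I would record is that $C\neq\emptyset$ is \emph{exactly} the hypothesis $(S_n)_\cL\in(\S^m_+)_\cL$: a matrix $\Sigma\in\S^m_+$ lies in $S_n+\cL^\perp$ iff $\Sigma-S_n\in\cL^\perp$, and since $\cL^\perp$ is the linear space orthogonal to the direction space of $\cL$, this is equivalent to $P_\cL(\Sigma)=P_\cL(S_n)$; hence the projection $(S_n)_\cL$ is realized by some positive definite matrix precisely when $C$ is nonempty.

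\textbf{Necessity.} If the optimum of \eqref{eq:mainmax} exists, Theorem~\ref{th:KKTg} produces $\widehat\Sigma=\nabla F^*(\widehat L)\in\S^m_+$ with $\widehat\Sigma-S_n\in\cL^\perp$, i.e.\ a point of $C$. Thus $C\neq\emptyset$ and, by the previous paragraph, $(S_n)_\cL\in(\S^m_+)_\cL$. This direction is immediate and uses nothing beyond the KKT description.

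\textbf{Sufficiency.} Here I assume $C\neq\emptyset$ and must show the optimum is attained. Using Assumption~1 I would fix $A_0\in\cL\cap\L^m_+$ and write $A_0=\nabla F(\Sigma^*)$ with $\Sigma^*\in\S^m_+$; then $F(\Sigma)-\<A_0,\Sigma\>=D_F(\Sigma,\Sigma^*)+\mathrm{const}$, so the dual is the Bregman projection of $\Sigma^*$ onto the closed affine set $S_n+\cL^\perp$ within $\S^m_+$ (an existence question governed by \cite{bauschke1997legendre}). Equivalently I work with $\g_n$ and argue that its superlevel sets $\{L\in\cL:\g_n(L)\geq\alpha\}$ are compact. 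Each is closed, being a sublevel set of the lsc convex function $F^*-\<\cdot,S_n\>$ intersected with the closed set $\cL$; boundedness is the substance. I would compute the recession function of $L\mapsto F^*(L)-\<L,S_n\>$, which equals $V\mapsto\sigma_{\dom(F)}(V)-\<V,S_n\>$. Since $\dom(F)$ has closure $\overline\S^m_+$, its support function is $0$ on $-\overline\S^m_+$ and $+\infty$ otherwise, so, restricting recession directions to the direction space of $\cL$, boundedness reduces to the strict inequality $\<W,S_n\>>0$ for every nonzero positive semidefinite $W$ in that direction space. A Farkas-type separation argument then shows this strict positivity is equivalent to $(S_n)_\cL$ lying in the projection of the \emph{open} cone $\S^m_+$, i.e.\ in $(\S^m_+)_\cL$ — closing the loop with the feasibility hypothesis (the open-versus-closed cone bookkeeping being exactly what converts nonstrict to strict).

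Compactness together with upper semicontinuity of $\g_n$ on $\dom(F^*)$ then yields a maximizer $\widehat L\in\cL\cap\dom(F^*)$, and it remains to place $\widehat L$ in the open set $\L^m_+$. This is where essential smoothness enters decisively: by Proposition~\ref{prop:legendre}, $F$ essentially smooth is Legendre, hence so is $F^*$, with ${\rm int}\,\dom(F^*)=\L^m_+$, so $\nabla F^*(L)=\Sigma$ blows up as $L\to\partial\L^m_+$. Moving from a putative boundary maximizer along the segment toward any point of $\cL\cap\L^m_+$ (nonempty by Assumption~1) then strictly increases $\g_n$, a contradiction, forcing $\widehat L\in\L^m_+$ and giving the optimizer $\widehat\Sigma=\nabla F^*(\widehat L)$. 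I expect the two genuine obstacles to be (i) making the recession/separation equivalence between coercivity and the feasibility condition fully rigorous, and (ii) the boundary-exclusion step, which relies essentially on the Legendre structure of $F$ rather than on convexity alone.
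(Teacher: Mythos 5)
Your proof is correct and its overall architecture matches the paper's: necessity via the KKT conditions of Theorem~\ref{th:KKTg} (identical to the paper's argument), and sufficiency via compactness of the superlevel sets of $\g_n$ on $\cL$ followed by boundary exclusion through essential smoothness of $F^*$ (Proposition~\ref{prop:legendre}). The one step where you take a genuinely different route is the coercivity argument. You compute the recession function of $F^* - \<\cdot,S_n\>$, reduce boundedness to the strict inequality $\<W,S_n\>>0$ for nonzero positive semidefinite $W$ in the direction space of $\cL$, and then need a cone-duality/separation argument (plus the $\mathrm{ri}(\pi(C))=\pi(\mathrm{ri}(C))$ bookkeeping) to identify that strict positivity with the feasibility hypothesis. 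The paper short-circuits all of this: it observes that $(S_n)_\cL\in(\S^m_+)_\cL$ means \emph{exactly} that there is a $\widehat\Sigma\in\S^m_+$ with $\<S_n,L\>=\<\widehat\Sigma,L\>$ for all $L\in\cL$, so on $\cL$ one may replace the linear term $\<L,S_n\>$ by $\<L,\widehat\Sigma\>$ with $\widehat\Sigma\in\mathrm{int}(\dom F)$, and then Fact~2.11 of \cite{bauschke1997legendre} gives bounded superlevel sets directly. The substitution trick converts the feasibility condition into a positive definite certificate in one line and makes your Farkas-type equivalence — which you rightly flag as the main obstacle — unnecessary; your route is more work but has the merit of exposing exactly which strict inequality drives coercivity, in the spirit of the classical existence criterion for MLEs in exponential families. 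Both arguments are sound.
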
\begin{proof}By standard linear algebra, every $S\in \S^m$ can be uniquely decomposed as the sum $S_\cL+S_{\cL^\perp}$, where $S_\cL$ is the orthogonal projection of $S$ onto $\cL$ and $S_{\cL^\perp}$ lies in the orthogonal space $\cL^\perp$. Thus, we have that $(S_n)_\cL\in (\S^m_+)_\cL$ if and only if there exists $\widehat \Sigma\in \S^m_+$ such that $\<S_n,L\>=\<\widehat\Sigma,L\>$ for all $L\in \cL$. In this case $\g_n(L)=\<\widehat\Sigma,L\>-F^*(L)$ when restricted to $\cL$. Since $\widehat\Sigma\in \S^m_+={\rm int}(\dom(F))$, by Fact 2.11 in \cite{bauschke1997legendre}, each sub-level set $\{L\in \S^m: \g_n(L)\geq t\}$ is bounded for every $t\in \R$. Then it follows that  $\{L\in \cL: \g_n(L)\geq t\}$ is compact for every $t\in \R$, nonempty for some $t$, and so the maximum exists in $\cL\cap \dom(F^*)$. Since $F$ is essentially smooth, by Proposition~\ref{prop:legendre}, so is $F^*$. In particular, the optimum must be attained in ${\rm int}(\dom(F^*))=\L^m_+$.

To prove the other implication, suppose $(S_n)_\cL\notin(\S^m_+)_\cL$ but $\g_n(L)$ is optimized for some $\widehat L\in \cL\cap \L^m_+$. Denote $\widehat \Sigma=\nabla F^*(\widehat L)\in \S^m_+$. Since $F^*$ is essentially smooth, $\widehat L\in {\rm int}(\dom(F^*))=\L^m_+$ (c.f. Proposition~\ref{prop:legendre}) and the KKT conditions in Theorem~\ref{th:KKTg} assure that $\widehat \Sigma-S_n\in \cL^\perp$, which implies that $(\widehat \Sigma)_\cL=(S_n)_\cL$. This contradicts the assumption  that $(S_n)_\cL\notin(\S^m_+)_\cL$.
 \end{proof}


Recall that examples (A), (C), and (D) are all essentially smooth.  For a quick illustration why essential smoothness  is necessary consider the following example. 
\begin{ex}\label{ex:B13}
Let $m=3$ and consider the function $F_B$ in \eqref{eq:B} with $\nabla F_B(\Sigma)=\Sigma$. Suppose $\cL$ is given by a single equation $L_{13}=0$. Consider two matrices
	$$
	S_n=\begin{bmatrix}
		1 & \tfrac23& \tfrac23\\[.1cm]
		\tfrac23 & 1 & \tfrac23\\[.1cm]
		 \tfrac23& \tfrac23&1
	\end{bmatrix}\qquad\mbox{and}\qquad \widehat \Sigma=\begin{bmatrix}
		1 & \tfrac23& 0\\[.1cm]
		\tfrac23 & 1 & \tfrac23\\[.1cm]
		0& \tfrac23&1
	\end{bmatrix}.
	$$
	Note that $S_n$ is positive definite, $\widehat L=\widehat\Sigma\in \cL$, $\widehat\Sigma-S_n\in \cL^\perp$. However, $\widehat \Sigma$ is not positive definite and so it cannot be a solution to  \eqref{eq:mainmax}. This does not contradict Theorem~\ref{th:optexist} because $F_B$ is not essentially smooth.
\end{ex}

Of course, if $S_n\in \S^m_+$, then $(S_n)_\cL\in (\S^m_+)_\cL$ for every linear subspace $\cL\subseteq \S^m$. If $S_n$ is not positive definite, verifying whether $(S_n)_\cL$ lies in $(\S^m_+)_\cL$ may be complicated.  Note however that this condition does not depend on $F$ as long as $F$ is essentially smooth and so the conditions for existence of the Bregman estimator are exactly the same as the conditions for existence the MLE in Gaussian linear concentration models. In the case of Gaussian graphical models this has been extensively studied \cite{buhl1993existence,uhler2010,gross:sullivant:18,blekherman2019maximum,bernstein2020typical,bernstein2021maximum}. See Theorem~2.1 in \cite{uhler2010} for the special case of Theorem~\ref{th:optexist} for Gaussian graphical models and \cite{sturmfels2010multivariate} for the general linear Gaussian concentration model case.

In Section~\ref{sec:numeric} we discuss how to numerically solve problems \eqref{eq:mainmax} and \eqref{eq:dual}. But first we provide some statistical analysis of the underlying estimator.

\section{Basic statistical analysis}\label{sec:stat}

In this section we fix a parametrization of the affine space $\cL$ defining the entropic covariance model:
$$
L(\theta)\;=\;A_0+\sum_{i=1}^d \theta_i A_i\qquad \theta=(\theta_1,\ldots,\theta_d)\in \R^d,
$$
where $A_0,A_1,\ldots,A_d$ are fixed matrices in $\S^m$. For simplicity, we assume that the generators $A_i$ of $\cL$ form an orthonormal basis, that is, $\<A_i,A_j\>=0$ and $\|A_i\|_F=1$ for $i\neq j$. We also assume $A_0=0$, that is, $\cL$ is a linear subspace. Although no results in this section depend on this assumption, it significantly simplifies the notation. We also write $\g_n(\theta)$ for $\g_n(L(\theta))$. 

\subsection{Consistency and asymptotic Gaussianity}\label{sec:asym}
Consider a random sample $X_1,\ldots,X_n$ from a zero mean distribution with positive definite covariance matrix $\Sigma_0=\nabla F^*(L_0)$ with $L_0=L(\theta_0)$ for some $\theta_0\in \R^d$. The estimator obtained by solving \eqref{eq:mainmax} is a \emph{convex} M-estimator and the standard asymptotic theory, as presented in \cite{haberman1989concavity,niemiro1992asymptotics}, can be applied. Indeed, define $m:\R^d\times \R^m\to \R\cup\{+\infty\}$ by
$$
m(\theta,x)\;=\;F^*(L(\theta))-x^\top L(\theta) x
$$
then maximizing $\g_n(\theta)$ is equivalent to minimizing the strictly convex function
\begin{equation}\label{eq:Mn}
M_n(\theta)\;=\;\frac1n\sum_{i=1}^n m(\theta,X_i)\;=\;F^*(L(\theta))-\<L(\theta),S_n\>\;=\;-\g_n(\theta).	
\end{equation}
The corresponding minimizer $\widehat\theta_n$ is still called the Bregman estimator for $M_F(\cL)$ although now the estimator depends on the choice of basis $A_0,A_1,\ldots,A_d$.

Note that $\E S_n=\Sigma_0$ and the function
\begin{equation}\label{eq:M}
M(\theta)\;:=\;\E M_n(\theta)\;=\;F^*(L(\theta))-\tr(L(\theta)\Sigma_0)	
\end{equation}
is strictly convex in the interior of its domain. We have
$$
\dom(M)\;=\;\{\theta\in\R^d:L(\theta)\in \dom(F^*)\}\;\supseteq \;\{\theta:L(\theta)\in \L^m_+\}\;=:\;\Theta_+
$$
and we assume $\theta_0\in \Theta_+$. Note that if $\L^m_+$ is open, so is $\Theta_+$. Theorem~1 in \cite{niemiro1992asymptotics} immediately gives the following result.
\begin{prop}
	Suppose $F\in \cE^m$ and $\E(S_n)=\Sigma_0\in M_F(\cL)$. The Bregman estimator $\widehat\theta_n$ in $M_F(\cL)$ is a consistent estimator of $\theta_0$, where $\theta_0$ is the unique point such that $\nabla F^*(L(\theta_0))=\Sigma_0$.
\end{prop}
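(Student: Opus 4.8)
The plan is to verify the hypotheses of the convex $M$-estimation consistency result (Theorem~1 in \cite{niemiro1992asymptotics}) for the criterion $M_n(\theta)=\frac1n\sum_{i=1}^n m(\theta,X_i)$ and then invoke it directly. Three ingredients are required: (a) convexity of $\theta\mapsto m(\theta,x)$ for each fixed $x$; (b) existence of a unique minimizer of the population criterion $M(\theta)=\E m(\theta,X)$, together with the fact that this minimizer equals $\theta_0$ and lies in the interior of $\dom(M)$; and (c) pointwise convergence $M_n(\theta)\to M(\theta)$. Ingredient (a) is immediate: $L(\theta)$ is affine in $\theta$, the term $F^*(L(\theta))$ is convex since $F^*\in\cconv(\S^m)$ by Proposition~\ref{prop:Fstar}(a), and $x^\top L(\theta)x=\langle L(\theta),xx^\top\rangle$ is linear in $\theta$; hence $m(\cdot,x)$ and $M_n$ are convex.

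For (b) I would first pin down $\theta_0$. By Assumption~2 we have $L_0:=\nabla F(\Sigma_0)\in\cL\cap\L^m_+$, so $\Sigma_0=\nabla F^*(L_0)$ by Proposition~\ref{prop:Fstar}(d); since the generators $A_1,\dots,A_d$ are linearly independent, there is a unique $\theta_0$ with $L(\theta_0)=L_0$, and then $\nabla F^*(L(\theta_0))=\Sigma_0$. Differentiating $M(\theta)=F^*(L(\theta))-\tr(L(\theta)\Sigma_0)$ shows that its $i$-th partial derivative equals $\langle\nabla F^*(L(\theta))-\Sigma_0,A_i\rangle$, which vanishes at $\theta_0$; thus $\theta_0$ is a stationary point of $M$. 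Strict convexity of $M$ on $\Theta_+$---which follows from the injectivity of $\nabla F^*=(\nabla F)^{-1}$ on $\L^m_+$, equivalently the strict convexity of $F^*$ there, composed with the injective affine map $L(\cdot)$---then upgrades this stationary point to the unique minimizer. Finally $\theta_0$ lies in the interior of $\dom(M)$ because $L(\theta_0)=L_0\in\L^m_+$.

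For (c), the law of large numbers gives $S_n\to\Sigma_0$ almost surely, so $\langle L(\theta),S_n\rangle\to\tr(L(\theta)\Sigma_0)$ and hence $M_n(\theta)=F^*(L(\theta))-\langle L(\theta),S_n\rangle\to M(\theta)$ for every fixed $\theta\in\dom(M)$. With (a)--(c) in hand, the convexity lemma underlying \cite{niemiro1992asymptotics} promotes this pointwise convergence of the convex functions $M_n$ to uniform convergence on compact subsets of the interior of $\dom(M)$, which forces the minimizers $\widehat\theta_n$ to converge in probability to the unique minimizer $\theta_0$. I expect the only delicate step to be (b): confirming that $\theta_0$ is a genuine interior minimizer rather than merely a boundary stationary point, which is exactly where Assumption~2, the strict convexity of $F^*$ on $\L^m_+$, and (when needed) the openness of $\L^m_+$ enter.
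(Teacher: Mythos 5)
Your proposal is correct and follows exactly the route the paper takes: it sets up $m(\theta,x)$ and $M_n(\theta)=-\g_n(\theta)$, records the strict convexity of $M$ on $\Theta_+$ and the fact that $\theta_0\in\Theta_+$, and then invokes Theorem~1 of \cite{niemiro1992asymptotics}. The only difference is that you spell out the verification of that theorem's hypotheses (convexity in $\theta$, identification of $\theta_0$ as the unique interior minimizer via stationarity plus strict convexity, and pointwise convergence from the law of large numbers), whereas the paper states these ingredients beforehand and cites the result as immediate.
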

The main reason, why such nice results exist follows from the fundamental property of convex functions that pointwise convergence implies uniform convergence; see also Theorem~II.1  in \cite{andersen1982cox}. In consequence, the statistical analysis is much simpler than for general M-estimators \cite{geyer1994asymptotics}.

If we assume existence of higher order moments, we obtain a stronger conclusion. Let 
\begin{equation}\label{eq:h}
h(\theta,x)=\nabla_\theta m(\theta,x)=[\<\Sigma(\theta)-xx^\top,A_i\>]_{i=1}^d.	
\end{equation}
\begin{prop}\label{prop:niemiro2}
	If the distribution of $X$ has finite moments up to order $2r$ (equiv. $\E\|h(\theta,X)\|^r<\infty$) for some $r\geq 1$ then for every $\epsilon>0$
	$$
	\P(\sup_{k\geq n}\|\widehat\theta_k-\theta_0\|>\epsilon)\;=\;o(n^{1-r}), \quad n\to \infty.
	$$ 
\end{prop}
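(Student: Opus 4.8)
The plan is to exploit a feature that is special to this setting: because $A_0=0$, the random fluctuation of the objective is \emph{linear} in $\theta$, which collapses the usual empirical-process machinery for convex $M$-estimators to a single rate-in-the-SLLN estimate. Comparing \eqref{eq:Mn} and \eqref{eq:M}, the only random part of $M_n$ enters through $S_n$, so
$$
M_n(\theta)-M(\theta)\;=\;-\<L(\theta),S_n-\Sigma_0\>\;=\;-\<\theta,W_n\>,\qquad W_n:=\bigl(\<A_i,S_n-\Sigma_0\>\bigr)_{i=1}^d\in\R^d,
$$
hence $M_n(\theta)=M(\theta)-\<\theta,W_n\>$. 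Writing $Y_k:=\bigl(\<A_i,X_kX_k^\top-\Sigma_0\>\bigr)_{i=1}^d$ we have $W_n=\tfrac1n\sum_{k=1}^n Y_k$ with the $Y_k$ i.i.d.\ and $\E Y_k=0$ (since $\E X_kX_k^\top=\Sigma_0$); in fact $Y_k=-h(\theta_0,X_k)$ by \eqref{eq:h} together with $\nabla F^*(L(\theta_0))=\Sigma_0$. Each coordinate $\<A_i,X_kX_k^\top\>=X_k^\top A_iX_k$ is a quadratic form in $X_k$, so $\|Y_k\|\le C\|X_k\|^2$ for a constant $C$, and the moment hypothesis on $X$ is precisely $\E\|Y_1\|^r=\E\|h(\theta_0,X)\|^r<\infty$, as claimed in the parenthetical.

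First I would localize the minimizer by convexity. Fix $\epsilon>0$ and set $\delta:=\inf_{\|\theta-\theta_0\|=\epsilon}\bigl(M(\theta)-M(\theta_0)\bigr)$; since $M$ is strictly convex with unique minimizer $\theta_0$ and the sphere is compact, $\delta>0$. On the event $\{\|W_n\|<\delta/\epsilon\}$, for every $\theta$ with $\|\theta-\theta_0\|=\epsilon$ we get
$$
M_n(\theta)-M_n(\theta_0)\;=\;\bigl(M(\theta)-M(\theta_0)\bigr)-\<\theta-\theta_0,W_n\>\;\ge\;\delta-\epsilon\|W_n\|\;>\;0,
$$
so $M_n$ strictly exceeds $M_n(\theta_0)$ on the sphere. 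Minimizing the convex $M_n$ over the closed ball $\{\|\theta-\theta_0\|\le\epsilon\}$ therefore produces a point in the open ball, which is a global minimizer of $M_n$; thus $\widehat\theta_n$ exists and satisfies $\|\widehat\theta_n-\theta_0\|<\epsilon$. Taking contrapositives (and folding nonexistence of the estimator into the exceptional event) yields
$$
\bigl\{\widehat\theta_n\text{ does not exist, or }\|\widehat\theta_n-\theta_0\|>\epsilon\bigr\}\;\subseteq\;\{\|W_n\|\ge\delta/\epsilon\},
$$
and since $\sup_{k\ge n}a_k>\epsilon$ iff some $a_k>\epsilon$, a union over $k\ge n$ gives
$$
\Bigl\{\sup_{k\ge n}\|\widehat\theta_k-\theta_0\|>\epsilon\Bigr\}\;\subseteq\;\Bigl\{\sup_{k\ge n}\|W_k\|\ge\delta/\epsilon\Bigr\}.
$$

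It then remains to bound $\P\bigl(\sup_{k\ge n}\|W_k\|\ge\eta\bigr)$ for $\eta:=\delta/\epsilon>0$, where $W_k$ is the mean of the first $k$ of the i.i.d.\ mean-zero vectors $Y_j$ with $\E\|Y_1\|^r<\infty$. This is a rate of convergence in the strong law of large numbers: for such sequences one has $\P\bigl(\sup_{k\ge n}\|W_k\|\ge\eta\bigr)=o(n^{1-r})$ for every fixed $\eta>0$. Applying this coordinatewise (each $|Y^{(i)}|\le\|Y\|$ has a finite $r$-th moment) together with a union bound over the $d$ coordinates, and combining with the inclusion above, proves the proposition. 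The main obstacle is exactly this probabilistic input: it is a maximal Baum--Katz / Marcinkiewicz--Zygmund estimate, and obtaining the sharp $o(n^{1-r})$ rate for the \emph{supremum} over $k\ge n$ (rather than mere summability of the single-index tails, whose crude union bound over $k\ge n$ is too lossy) requires a maximal inequality and a dyadic-blocking argument. This is precisely the content packaged by Theorem~2 in \cite{niemiro1992asymptotics}, whose hypotheses are verified by the convex, linearly-perturbed structure established above; by contrast, the convexity localization here is elementary because the identity $M_n-M=-\<\cdot,W_n\>$ removes any need for uniform control of $M_n-M$ over a neighborhood of $\theta_0$.
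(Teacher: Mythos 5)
Your argument is correct, but it is worth noting that the paper's entire proof of this proposition is a one-line appeal to Theorem~2 of \cite{niemiro1992asymptotics}, whereas you reconstruct the reduction from scratch. The substance of what you add is the observation that, because $A_0=0$, the stochastic part of the objective is exactly linear: $M_n(\theta)-M(\theta)=-\<\theta,W_n\>$ with $W_n$ an average of i.i.d.\ mean-zero vectors $-h(\theta_0,X_k)$. This collapses Niemiro's uniform-convergence step for convex $M$-estimators into an elementary Cauchy--Schwarz localization on the sphere $\|\theta-\theta_0\|=\epsilon$, leaving as the only probabilistic input the maximal Baum--Katz rate $\P(\sup_{k\ge n}\|W_k\|\ge\eta)=o(n^{1-r})$ under $\E\|Y_1\|^r<\infty$ --- which is precisely where Theorem~2 of \cite{niemiro1992asymptotics} (or the classical Baum--Katz theorem applied coordinatewise) does the remaining work, so you have not avoided the citation but you have isolated exactly which part of it is needed. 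Two small points deserve a word: the positivity of $\delta=\inf_{\|\theta-\theta_0\|=\epsilon}(M(\theta)-M(\theta_0))$ uses lower semicontinuity of $M=F^*\circ L-\<\cdot,\Sigma_0\>$ together with the fact that $\theta_0$ is the unique minimizer (and points of the sphere outside $\dom(M)$ only help); and to conclude that the local minimizer you produce is \emph{the} Bregman estimator you should either restrict to $\epsilon$ small enough that the closed ball lies in $\Theta_+$, where $M_n$ is strictly convex (the claim for larger $\epsilon$ then follows by monotonicity of the event in $\epsilon$), or note that the convex set of global minimizers meets the open set $\Theta_+$ and is therefore a singleton. Neither issue is a gap; both are routine to patch, and your route has the pedagogical advantage of making visible why the hypotheses of Niemiro's theorem are automatic in this linearly-perturbed setting.
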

The proof follows immediately from Theorem~2 in \cite{niemiro1992asymptotics}.

We now turn to proving asymptotic Gaussianity. Denoting $\cS=\cov(X_1X_1^\top)$ to be the covariance of the matrix $X_1X_1^\top$, we get
\begin{equation}\label{eq:cS}
\cov(S_n)\;=\;\frac{1}{n^2}\sum_{i=1}^n \cov(X_iX_i^\top)\;=\;\frac1n \cS.	
\end{equation}
Note that $\cS$  is a covariance of a matrix valued random variable. Thus, similarly as in the standard vector-valued case, $\cS$ is a positive semidefinite and self-adjoint linear map from  $\S^m$ to $\S^m$  so that, for all $A,B\in \S^m$, we have
$$
\<A,\cS A\>\geq 0\quad\mbox{and}\quad \<A,\cS B \>=\<\cS A,B\>.
$$
The notation $\cS A$ denotes the action of the linear mapping $\cS:\S^m\to \S^m$ on $A$; see, for example, Section~2 and Appendix~A in \cite{lauritzen2023fundamentals}. Equivalently, it defines a bilinear mapping, for all $A,B\in \S^m$,
$$
\cS [A,B]\;=\;\E\left(\<X_1X_1^\top-\Sigma_0,A\>\cdot \<X_1X_1^\top-\Sigma_0,B\>\right).
$$

By \eqref{eq:nablag}, we can show that the Hessian $\nabla^2 M_n(\theta)$ of $M_n(\theta)$ does not depend on the data and it is equal to the Hessian of $M(\theta)$. We use the notation 
\begin{equation}\label{eq:hessMn}
\cI(\theta)\;=\;\nabla^2 M_n(\theta)\;=\;\nabla^2 M(\theta)\;\in\;\S^d.	
\end{equation}
Since $M(\theta)$ is strictly concave in $\Theta_+$, for every fixed $\theta\in \Theta_+$, $\cI(\theta)$ is a positive definite matrix. We write $\cI_0$  for $\cI(\theta_0)$. 

\begin{thm}\label{th:clasas}
Denote by $\widehat \theta_n$ the Bregman estimator obtained under data $S_n$ generated from a mean zero distribution with positive definite covariance $\E S_n=\Sigma_0=\nabla F^*(L(\theta_0))$ and such that $\V(S_n)=\tfrac1n \cS$ (the fourth order moments are assumed finite). Then 
$$\sqrt{n}(\widehat \theta_n-\theta_0)\;\overset{d}{\longrightarrow} N_m(0,\cI_0^{-1}\Omega \cI_0^{-1}),$$
where 
$\Omega_{ij}\;=\;\cS[A_i,A_j]$ for all $i,j=1,\ldots,m$.
\end{thm}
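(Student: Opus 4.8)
The plan is to establish asymptotic normality by recognizing $\widehat\theta_n$ as a convex $M$-estimator and invoking the standard delta-method-style argument tailored to this smooth, convex setting. First I would recall the key structural facts already assembled: by \eqref{eq:Mn} the objective $M_n(\theta)$ is strictly convex with gradient
$$
\nabla M_n(\theta)\;=\;\frac1n\sum_{i=1}^n h(\theta,X_i)\;=\;[\<\Sigma(\theta)-S_n,A_i\>]_{i=1}^d,
$$
using \eqref{eq:h}, and by \eqref{eq:hessMn} the Hessian $\cI(\theta)=\nabla^2 M_n(\theta)$ is nonrandom and positive definite on $\Theta_+$. Because $\widehat\theta_n$ is consistent (by the preceding proposition) and $\theta_0$ lies in the open set $\Theta_+$, with probability tending to one $\widehat\theta_n$ is an interior stationary point, so $\nabla M_n(\widehat\theta_n)=0$.

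Next I would analyze the score at the truth. Since $\Sigma(\theta_0)=\Sigma_0=\E S_n$ and $h(\theta_0,X_i)=[\<\Sigma_0-X_iX_i^\top,A_j\>]_{j=1}^d$, the random vector $\nabla M_n(\theta_0)$ is a sample average of i.i.d.\ mean-zero terms. A direct computation of its covariance, using \eqref{eq:cS} and the definition of the bilinear form $\cS[\cdot,\cdot]$, gives
$$
\cov\big(\sqrt n\,\nabla M_n(\theta_0)\big)_{ij}\;=\;\cS[A_i,A_j]\;=\;\Omega_{ij}.
$$
The multivariate central limit theorem (valid because the fourth moments, equivalently the entries of $\cS$, are finite) then yields $\sqrt n\,\nabla M_n(\theta_0)\overset{d}{\to} N_d(0,\Omega)$.

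Finally I would combine these via a Taylor expansion of the score. Because $M_n$ is $C^2$ on $\Theta_+$ with constant Hessian $\cI(\theta)$ (independent of the data), the exact mean-value expansion of $\nabla M_n$ between $\theta_0$ and $\widehat\theta_n$ reads $0=\nabla M_n(\widehat\theta_n)=\nabla M_n(\theta_0)+\cI(\bar\theta_n)(\widehat\theta_n-\theta_0)$ for some $\bar\theta_n$ on the segment. Solving gives
$$
\sqrt n\,(\widehat\theta_n-\theta_0)\;=\;-\,\cI(\bar\theta_n)^{-1}\,\sqrt n\,\nabla M_n(\theta_0).
$$
By consistency $\bar\theta_n\to\theta_0$, and continuity of the Hessian gives $\cI(\bar\theta_n)\to\cI_0$; Slutsky's theorem together with the CLT above then delivers $\sqrt n(\widehat\theta_n-\theta_0)\overset{d}{\to} N_d(0,\cI_0^{-1}\Omega\,\cI_0^{-1})$. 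The cleanest route, avoiding any subtlety about whether the minimizer lands in the interior, is the argument of \cite{niemiro1992asymptotics}, whose Theorem~3 is purpose-built for convex $M$-estimators and delivers exactly this sandwich form; I would cite it to handle the interior/existence bookkeeping uniformly.

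I expect the main obstacle to be the bookkeeping around existence and interiority of $\widehat\theta_n$ rather than any hard estimate: the Hessian being data-free and strictly positive definite makes the expansion step essentially exact, and the score covariance computation is routine. The one place requiring care is verifying that the sandwich remains valid on the open domain $\Theta_+$ (so that stationarity holds and $\cI(\bar\theta_n)$ is invertible along the segment), which is precisely why deferring to the convex $M$-estimation machinery of \cite{niemiro1992asymptotics} is the most economical finish.
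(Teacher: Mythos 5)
Your proposal is correct and follows essentially the same route as the paper: the paper likewise invokes Niemiro's asymptotic representation for convex $M$-estimators (Theorem~4 there, not Theorem~3) to get $\sqrt{n}(\widehat\theta_n-\theta_0)=-\cI_0^{-1}\sqrt{n}\,\nabla M_n(\theta_0)+o_P(1)$, and then applies the CLT to the mean-zero score $\nabla M_n(\theta_0)=[\<\Sigma_0-S_n,A_i\>]_{i=1}^d$ with covariance $\Omega_{ij}=\cS[A_i,A_j]$. Your additional mean-value sketch is fine in spirit (though the exact single-point expansion does not literally hold for vector-valued gradients — one should use the integral form of Taylor's theorem), and you rightly defer to the convex $M$-estimation machinery for the interiority bookkeeping, exactly as the paper does.
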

\noindent The proof of this and the remaining results of this section is given in Appendix~\ref{app:proofsstats}

\subsection{Finite sample bounds}

Obtaining finite sample bounds in specific distributional settings is also rather straightforward under additional strong convexity assumptions, but we need to be more careful about the existence of the estimator. In this section, we always assume that the solution to \eqref{eq:mainmax} exists. For example, by  Theorem~\ref{th:optexist}, the optimum exists with probability one if $n\geq m$, $F$ is essentially smooth.

We define centred versions of $M$ and $M_n$:
$$
\overline M(\omega)=M(\theta_0+\omega)-M(\theta_0),\qquad \overline M_n(\omega)=M_n(\theta_0+\omega)-M_n(\theta_0).
$$
We collect some basic facts about $\overline M(\omega)$, which follow immediately by the chain rule.
\begin{lem}\label{lem:gomega}
The function $\overline M(\omega)=F^*(L_0+L(\omega))-F^*(L_0)-\<\Sigma_0,L(\omega)\>$ is strictly convex in the interior of its domain and nonnegative. If $F^*$ is twice continuously differentiable then 
 $$
 \frac{\partial }{\partial \omega_i}\overline M(\omega)\;=\;\<\Sigma(\theta_0+\omega)-\Sigma_0,A_i\>,\qquad i=1,\ldots,d.
 $$	
 In particular, $\overline M(0)=0$ and $\nabla\overline M(0)=0$. Moreover,
 $$
 \frac{\partial^2 }{\partial \omega_i\partial \omega_j}\overline M(\omega)\;=\;\nabla^2 F^*(L_0+L(\omega))[A_i,A_j],\qquad i,j=1,\ldots,d
 $$	
 where $\nabla^2 F^*(L):\S^m\to \S^m$ for $\theta\in \Theta_+$ is a self-adjoint positive definite linear operator representing the second derivative of $F^*$ at $L$. \end{lem}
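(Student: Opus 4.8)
The plan is to extract every assertion from a single closed-form expression for $\overline M$. Substituting $M(\theta)=F^*(L(\theta))-\<L(\theta),\Sigma_0\>$ into $\overline M(\omega)=M(\theta_0+\omega)-M(\theta_0)$ and using that $A_0=0$ makes $\theta\mapsto L(\theta)$ linear, so that $L(\theta_0+\omega)=L_0+L(\omega)$, gives the stated formula. Since the section set-up (via Proposition~\ref{prop:Fstar}(d)) gives $\Sigma_0=\nabla F^*(L_0)$, this formula is exactly the Bregman divergence of the conjugate,
$$
\overline M(\omega)\;=\;F^*(L_0+L(\omega))-F^*(L_0)-\<\nabla F^*(L_0),L(\omega)\>\;=\;D_{F^*}\bigl(L_0+L(\omega),L_0\bigr),
$$
where $F^*$ is differentiable at $L_0$ because $L_0\in\L^m_+\subseteq{\rm int}\,\dom(F^*)$ by Proposition~\ref{prop:Fstar}(b),(c). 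Nonnegativity of $\overline M$ is then immediate from the nonnegativity of Bregman divergences of convex functions (the same remark that gives $D_F\geq 0$ applies verbatim to $F^*$).

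For strict convexity I would first show that $\nabla F^*$ is strictly monotone on $\L^m_+$. Indeed, for distinct $L_1,L_2\in\L^m_+$ put $\Sigma_i=\nabla F^*(L_i)\in\S^m_+$; these are distinct and satisfy $L_i=\nabla F(\Sigma_i)$ by Proposition~\ref{prop:Fstar}(d), so
$$
\<\nabla F^*(L_1)-\nabla F^*(L_2),\,L_1-L_2\>\;=\;\<\Sigma_1-\Sigma_2,\,\nabla F(\Sigma_1)-\nabla F(\Sigma_2)\>\;>\;0,
$$
the strict inequality being the strict monotonicity of $\nabla F$ that is equivalent to strict convexity of $F$ on $\S^m_+$. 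For a differentiable convex function strict monotonicity of the gradient is equivalent to strict convexity, so $F^*$ is strictly convex on $\L^m_+$; composing with the injective linear map $L$ (the $A_i$ being linearly independent, hence $L$ injective) and subtracting the affine term $\<\Sigma_0,L(\omega)\>+F^*(L_0)$ preserves strict convexity, giving the claim on the region $\{\omega:\theta_0+\omega\in\Theta_+\}$, i.e. in the interior of the domain. Note that no second-order smoothness is needed for this step.

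The derivative formulas follow by the chain rule once $F^*$ is differentiable. Writing $\Sigma(\theta)=\nabla F^*(L(\theta))$ and using $\partial_{\omega_i}L(\omega)=A_i$,
$$
\partial_{\omega_i}\overline M(\omega)\;=\;\<\nabla F^*(L_0+L(\omega)),A_i\>-\<\Sigma_0,A_i\>\;=\;\<\Sigma(\theta_0+\omega)-\Sigma_0,A_i\>.
$$
Evaluating the formula for $\overline M$ and this derivative at $\omega=0$ gives $\overline M(0)=0$ and, since $\Sigma(\theta_0)=\nabla F^*(L_0)=\Sigma_0$, also $\nabla\overline M(0)=0$. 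Differentiating once more, and using $\partial_{\omega_j}\nabla F^*(L_0+L(\omega))=\nabla^2F^*(L_0+L(\omega))[A_j]$, yields $\partial_{\omega_i}\partial_{\omega_j}\overline M(\omega)=\nabla^2F^*(L_0+L(\omega))[A_i,A_j]$ as claimed; self-adjointness of the operator $\nabla^2F^*(L)$ is then just the symmetry of mixed second partials, valid under the $C^2$ hypothesis.

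The one point that is not purely formal is the positive definiteness (as opposed to mere semidefiniteness) of the operator $\nabla^2F^*(L)$ on all of $\S^m$; this is where I expect the real work to sit. Convexity alone only gives $\nabla^2F^*(L)\succeq 0$, and strict convexity of $F^*$ does not by itself exclude a singular Hessian at a point. I would instead differentiate the inverse-gradient identity $\nabla F\bigl(\nabla F^*(L)\bigr)=L$ from Proposition~\ref{prop:Fstar}(d), which gives $\nabla^2F(\Sigma)\circ\nabla^2F^*(L)={\rm Id}$ with $\Sigma=\nabla F^*(L)$, so that $\nabla^2F^*(L)=[\nabla^2F(\Sigma)]^{-1}$ is invertible and hence, being positive semidefinite, positive definite. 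This step needs $F$ itself to be $C^2$ with $\nabla^2F(\Sigma)\succ 0$ on $\S^m_+$, a property not guaranteed by membership in $\cE^m$ alone but readily verified for the running examples, all of which are spectral sums (see Appendix~\ref{sec:spec}); invoking that verification closes the argument.
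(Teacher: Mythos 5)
Your proof is correct and follows the same route the paper intends: the paper offers no argument beyond the remark that these facts ``follow immediately by the chain rule,'' and your computation is exactly that chain-rule verification, supplemented by the Bregman-divergence identity $\overline M(\omega)=D_{F^*}(L_0+L(\omega),L_0)$ for nonnegativity and the monotonicity of $\nabla F^*$ for strict convexity. Your closing observation is also well taken: positive definiteness of $\nabla^2F^*(L)$ does not follow from strict convexity and $C^2$ smoothness of $F^*$ alone, and the identity $\nabla^2F(\Sigma)\circ\nabla^2F^*(L)={\rm Id}$ you invoke (which only needs $F$ to be twice differentiable at $\Sigma$, not a priori nonsingular there) is precisely the argument the paper itself uses later in the proof of Proposition~\ref{prop:kappabound}.
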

%

 Define $\Delta^{(n)}\in \R^d$ on each coordinate by
\begin{equation}\label{eq:Delta}
\Delta_i^{(n)}=\<S_n-\Sigma_0,A_i\> \qquad\mbox{for } i=1,\ldots,d.	
\end{equation}
\begin{lem}\label{lem:finite}
Fix $\epsilon>0$ and suppose that $\overline M$ is $\mu$-strongly convex in the $\epsilon$-ball, that is, $\overline M(\omega)-\tfrac{\mu}{2}\|\omega\|^2$ is convex on the $\epsilon$-ball. If $\widehat\theta_n$ is the Bregman estimator based on the sample covariance matrix $S_n$ from the true distribution with covariance corresponding to parameter $\theta_0$ then 
	$$
	\P(\|\widehat\theta_n-\theta_0\|\leq \epsilon)\;\;\geq \;\;\P(\|\Delta^{(n)}\|_\infty\leq  \tfrac{\mu\epsilon}{2\sqrt{d}}).
	$$
\end{lem}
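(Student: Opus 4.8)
The plan is to prove the event inclusion $\{\|\Delta^{(n)}\|_\infty\le\tfrac{\mu\epsilon}{2\sqrt d}\}\subseteq\{\|\widehat\theta_n-\theta_0\|\le\epsilon\}$, after which the asserted probability bound follows immediately from monotonicity of $\P$. Write $\widehat\omega_n=\widehat\theta_n-\theta_0$, so that $\widehat\omega_n$ is the unique (by strict convexity of $M_n$) minimizer of $\overline M_n$. The first step is to record the exact relationship between the empirical and population objectives. Using $A_0=0$, so that $L(\theta_0+\omega)-L(\theta_0)=L(\omega)=\sum_i\omega_iA_i$, together with \eqref{eq:Mn}, \eqref{eq:M} and the definition \eqref{eq:Delta}, a direct computation gives
\begin{equation*}
\overline M_n(\omega)\;=\;\overline M(\omega)-\langle L(\omega),S_n-\Sigma_0\rangle\;=\;\overline M(\omega)-\langle \Delta^{(n)},\omega\rangle,
\end{equation*}
where the final inner product is the Euclidean one on $\R^d$ and we used orthonormality of $\{A_i\}$.

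Next I would convert the local strong convexity into a quadratic lower bound on $\overline M$ near $0$. Since $g(\omega):=\overline M(\omega)-\tfrac\mu2\|\omega\|^2$ is convex on the $\epsilon$-ball and, by Lemma~\ref{lem:gomega}, $\overline M(0)=0$ and $\nabla\overline M(0)=0$, we have $g(0)=0$ and $0\in\partial g(0)$; the subgradient inequality then yields $\overline M(\omega)\ge\tfrac\mu2\|\omega\|^2$ for all $\|\omega\|\le\epsilon$. On the event $E:=\{\|\Delta^{(n)}\|_\infty\le\tfrac{\mu\epsilon}{2\sqrt d}\}$, the bound $\|\Delta^{(n)}\|_2\le\sqrt d\,\|\Delta^{(n)}\|_\infty\le\tfrac{\mu\epsilon}{2}$ together with Cauchy–Schwarz gives, for every $\omega$ on the sphere $\|\omega\|=\epsilon$,
\begin{equation*}
\overline M_n(\omega)\;\ge\;\tfrac\mu2\|\omega\|^2-\|\Delta^{(n)}\|_2\,\|\omega\|\;\ge\;\tfrac\mu2\epsilon^2-\tfrac{\mu\epsilon}{2}\,\epsilon\;=\;0\;=\;\overline M_n(0).
\end{equation*}

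The main step, and the one requiring the most care, is the localization: turning this boundary bound into the conclusion that the minimizer lies inside the $\epsilon$-ball, given that strong convexity is assumed only locally. Here I would argue by contradiction. If $\|\widehat\omega_n\|>\epsilon$ on $E$, set $t=\epsilon/\|\widehat\omega_n\|\in(0,1)$ and $\omega^\ast=t\,\widehat\omega_n$, so that $\|\omega^\ast\|=\epsilon$ and $\omega^\ast$ lies strictly between the distinct points $0$ and $\widehat\omega_n$. Since $\overline M_n$ is strictly convex and $\overline M_n(\widehat\omega_n)\le\overline M_n(0)=0$,
\begin{equation*}
\overline M_n(\omega^\ast)\;<\;(1-t)\,\overline M_n(0)+t\,\overline M_n(\widehat\omega_n)\;\le\;0,
\end{equation*}
which contradicts the boundary bound $\overline M_n(\omega^\ast)\ge0$ just established. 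Hence $\|\widehat\omega_n\|\le\epsilon$ on $E$, proving $E\subseteq\{\|\widehat\theta_n-\theta_0\|\le\epsilon\}$ and the lemma. The only genuinely delicate points are justifying the quadratic lower bound purely from the local strong convexity hypothesis and making the strict-convexity contradiction clean on the sphere; everything else is bookkeeping.
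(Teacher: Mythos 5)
Your proof is correct, but it reaches the conclusion by a different mechanism than the paper. Both arguments begin from the same identity $\overline M_n(\omega)=\overline M(\omega)-\langle\Delta^{(n)},\omega\rangle$ and both pay the same $\sqrt d$ factor when converting between norms, but the paper works at the level of gradients: it identifies the event $\{\|\widehat\theta_n-\theta_0\|\le\epsilon\}$ with $\{\Delta^{(n)}\in\nabla\overline M(\epsilon\B_2)\}$ via the first-order condition $\nabla\overline M(\widehat\omega)=\Delta^{(n)}$, derives $\|\nabla\overline M(\omega)\|_\infty\ge\mu\|\omega\|/(2\sqrt d)$ from strong convexity and H\"older's inequality, and uses monotonicity of $t\mapsto\nabla\overline M(t\omega)$ along rays to show the gradient image of the ball contains the relevant $\ell_\infty$-ball. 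You instead work at the level of function values: the quadratic minorant $\overline M(\omega)\ge\tfrac{\mu}{2}\|\omega\|^2$ plus Cauchy--Schwarz gives $\overline M_n\ge\overline M_n(0)$ on the sphere, and a convexity/localization step traps the minimizer inside the ball. Your route is the standard ``basic inequality'' argument from M-estimation theory and is somewhat more self-contained: it avoids the paper's implicit claims that $\nabla\overline M$ is well defined on all of $\epsilon\B_2$ and that the stationarity condition exactly characterizes the minimizer, and your contradiction step only needs uniqueness of the minimizer, which follows from the strict convexity of $M_n$ asserted at \eqref{eq:Mn}. What the paper's formulation buys in exchange is the slightly sharper intermediate statement $\P(\Delta^{(n)}\in\nabla\overline M(\epsilon\B_2))\ge\P(\|\Delta^{(n)}\|_\infty\le\inf_{\|\omega\|=\epsilon}\|\nabla\overline M(\omega)\|_\infty)$, which isolates exactly where the $\ell_\infty$ geometry enters before the final crude bound. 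There is no gap in your argument.
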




By Lemma~\ref{lem:finite}, it is now enough to find probabilistic bounds on $\|\Delta^{(n)}\|_\infty$. If the distribution of the sample has a bounded support, finite sample bounds for $\|\Delta^{(n)}\|_\infty$ can be easily obtained using maximal inequalities for sub-Gaussian random variables. However, this strategy is not applicable in the Gaussian case. The following method is an alternative approach that has the potential for extension to other situations.

\begin{thm}\label{th:finite}
	Fix  $F\in \cE^m$, a linear subspace $\cL$ generated by an orthonormal basis $A_1,\ldots,A_d$. Suppose $X_1,\ldots,X_n$ is a random sample from $N(0,\Sigma_0)$ with $\nabla F(\Sigma_0)\in \cL\cap \L^m_+$. For $\epsilon>0$, let $\mu$ be such that $\overline M$ be $\mu$-strongly convex in the $\epsilon$-ball. If $\widehat \theta_n$ is the corresponding Bregman estimator of the true parameter $\theta_0$ then 
	$$
	\P(\|\widehat\theta_n-\theta_0\|>\epsilon) \;\leq\; \begin{cases}
		2d\exp\{-\frac{\mu^2\epsilon^2n}{32 d \|\Sigma_0\|^2}\} & \mbox{if }\epsilon\leq\tfrac{2\|\Sigma_0\|^2\sqrt{d}}{\mu},\\[2mm]
		2d\exp\{-\frac{\mu\epsilon n}{16\sqrt{d}\|\Sigma_0\| }\} & \mbox{otherwise}.
	\end{cases}
	$$
	In particular, for any $\delta\in (0,1)$, if $n\geq 8\log(\tfrac{2d}{\delta})$ then $\|\widehat\theta_n-\theta_0\|\leq \tfrac{4\|\Sigma_0\|}{\mu}\sqrt{\tfrac{2d}{n}\log(\tfrac{2d}{\delta})}$ with probability at least $1-\delta$.
\end{thm}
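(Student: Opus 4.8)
The plan is to combine Lemma~\ref{lem:finite}, which converts the estimation error into a tail bound for the empirical fluctuations $\Delta^{(n)}$, with a Bernstein-type concentration inequality for Gaussian quadratic forms. Taking complements in Lemma~\ref{lem:finite} gives immediately
\[
\P(\|\widehat\theta_n-\theta_0\|>\epsilon)\;\le\;\P\big(\|\Delta^{(n)}\|_\infty>\tfrac{\mu\epsilon}{2\sqrt d}\big),
\]
so it suffices to bound the right-hand side by the stated two-case expression. A union bound over the $d$ coordinates, using that both tails of each $\Delta_i^{(n)}=\<S_n-\Sigma_0,A_i\>$ are controlled symmetrically, reduces the whole problem to a single scalar tail estimate and supplies the prefactor $2d$.

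The next step is to expose the sub-exponential structure of each coordinate. Whitening via $X_j=\Sigma_0^{1/2}Z_j$ with $Z_j\sim N(0,I_m)$ independent, one writes
\[
\Delta_i^{(n)}\;=\;\frac1n\sum_{j=1}^n\big(Z_j^\top B_i Z_j-\tr(B_i)\big),\qquad B_i:=\Sigma_0^{1/2}A_i\Sigma_0^{1/2},
\]
which is centred because $\tr(B_i)=\<A_i,\Sigma_0\>$. Diagonalizing $B_i$ displays each summand as a centred weighted sum of $\chi^2_1$ variables $\sum_k\lambda_k(W_k^2-1)$ with $\sum_k\lambda_k^2=\|B_i\|_F^2$ and $\max_k|\lambda_k|=\|B_i\|$. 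The crucial estimates are the two norm bounds $\|B_i\|\le\|\Sigma_0^{1/2}\|^2\|A_i\|\le\|\Sigma_0\|$ and $\|B_i\|_F\le\|\Sigma_0^{1/2}\|^2\|A_i\|_F=\|\Sigma_0\|$, both following from the inequalities $\|MN\|\le\|M\|\,\|N\|$ and $\|MN\|_F\le\|M\|\,\|N\|_F$ together with $\|A_i\|_F=1$; these are exactly what let all $d$ coordinates be controlled by the single scalar $\|\Sigma_0\|$.

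With this structure in place I would invoke the moment generating function bound $\E\exp(s(W^2-1))\le\exp(2s^2)$, valid for $|s|\le 1/4$. Multiplying over the eigenvalues and then over the $n$ independent samples yields, for $|s|\le n/(4\|B_i\|)$,
\[
\E\exp\!\big(s\,\Delta_i^{(n)}\big)\;\le\;\exp\!\Big(\tfrac{2s^2\|B_i\|_F^2}{n}\Big),
\]
so $\Delta_i^{(n)}$ is sub-exponential with variance proxy $4\|B_i\|_F^2/n$ and scale $4\|B_i\|/n$. The standard Chernoff optimization over $s$ then produces the two regimes $\exp\!\big(-n\min\{t^2/(8\|B_i\|_F^2),\,t/(8\|B_i\|)\}\big)$, the min switching at $t=\|B_i\|_F^2/\|B_i\|\le\|\Sigma_0\|$. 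Substituting the norm bounds and $t=\tfrac{\mu\epsilon}{2\sqrt d}$ turns the quadratic regime into the exponent $\tfrac{\mu^2\epsilon^2 n}{32 d\|\Sigma_0\|^2}$ and the linear one into $\tfrac{\mu\epsilon n}{16\sqrt d\|\Sigma_0\|}$. The final ``in particular'' claim follows by equating the first bound to $\delta$, solving $\epsilon=\tfrac{4\|\Sigma_0\|}{\mu}\sqrt{\tfrac{2d}{n}\log(2d/\delta)}$, and checking that $n\ge 8\log(2d/\delta)$ is exactly the condition keeping this $\epsilon$ inside the sub-Gaussian regime.

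The main obstacle, and the only genuinely delicate part, is propagating the absolute constants through the $\chi^2$ moment generating function estimate and the Chernoff step so that the constant $8$ (hence $32$ and $16$) emerges exactly; this is precisely why one needs the sharp bound $\E e^{s(W^2-1)}\le e^{2s^2}$ on $|s|\le 1/4$ rather than a cruder sub-exponential estimate. Everything else is routine bookkeeping once the whitening and the two operator/Frobenius norm inequalities for $B_i$ are set up.
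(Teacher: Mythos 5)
Your proposal is correct and follows essentially the same route as the paper: Lemma~\ref{lem:finite} plus a union bound, whitening to express each $\Delta^{(n)}_k$ as a centred weighted sum of independent $\chi^2_1$ variables with weights given by the eigenvalues of $\sqrt{\Sigma_0}A_k\sqrt{\Sigma_0}$, the operator- and Frobenius-norm bounds $\|B_k\|,\|B_k\|_F\le\|\Sigma_0\|$, and the standard sub-exponential (Bernstein/Chernoff) two-regime tail bound, followed by the substitution $t=\tfrac{\mu\epsilon}{2\sqrt d}$. The only difference is presentational: you carry out the moment generating function computation explicitly where the paper cites the sub-exponential parameter calculus of Wainwright (Example~2.8 and Proposition~2.9), and the constants and the final inversion for $\epsilon$ come out identically.
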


\subsection{Finite sample bounds for spectral sums}\label{app:finite}

Theorem~\ref{th:finite} shows that concentration of the Bregman estimator $\widehat\theta_n$ around the true value $\theta_0$ is driven by $d,n,\|\Sigma_0\|$, and $\mu$. We will now show how to further bound $\mu$ for our specific examples. 

\begin{prop}\label{prop:kappabound} Suppose $A_1,\ldots,A_d$ forms an orthonormal basis of the linear space $\cL\subseteq \S^m$.
	Suppose $F\in \cE^m$ is of the form $F(\Sigma)=\tr(\phi(\Sigma))$ with $\phi$ twice continuously differentiable. If there is an $\ell>0$ such that 
	\begin{equation}\label{eq:techcond}
\frac{\phi'(a)-\phi'(b)}{a-b}\;\leq\;\frac{\ell}{2}(\phi''(a)+\phi''(b)) \qquad\mbox{for all }a\geq b>0.	
\end{equation}
Then $\overline M$ is $\tfrac{1}{\alpha\ell}$- strongly convex in the $\epsilon$-ball, where $\alpha=\max_{\|\omega\|\leq \epsilon}\|\phi''(\Sigma(\theta_0+\omega))\|$.
\end{prop}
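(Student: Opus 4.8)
The plan is to reduce $\tfrac{1}{\alpha\ell}$-strong convexity of $\overline M$ to an operator inequality for the Hessian of $F$, and then to verify that inequality for spectral sums directly from \eqref{eq:techcond}. By Lemma~\ref{lem:gomega}, the Hessian of $\overline M$ at $\omega$ acts on $v=(v_1,\ldots,v_d)\in\R^d$ by
$$
v^\top \nabla^2\overline M(\omega)\,v \;=\; \sum_{i,j} v_iv_j\,\nabla^2 F^*(L_0+L(\omega))[A_i,A_j] \;=\; \nabla^2 F^*(L_0+L(\omega))[V,V],
$$
where $V=\sum_i v_iA_i\in\cL$ and the second equality is bilinearity. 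Since $A_1,\ldots,A_d$ is orthonormal, $\|V\|_F^2=\|v\|^2$, so $\overline M$ is $\mu$-strongly convex on the $\epsilon$-ball exactly when $\nabla^2 F^*(L)[V,V]\geq \mu\|V\|_F^2$ for every $V\in\cL$ and every $L=L_0+L(\omega)$ with $\|\omega\|\leq\epsilon$. First I would differentiate the identity $\nabla F\circ\nabla F^*=\mathrm{id}$ from Proposition~\ref{prop:Fstar}(d) to obtain $\nabla^2 F^*(L)=[\nabla^2 F(\Sigma)]^{-1}$ with $\Sigma=\nabla F^*(L)\in\S^m_+$; because both operators are self-adjoint and positive definite, the desired lower bound $\nabla^2 F^*(L)\succeq\mu\,I$ is equivalent to the upper bound $\nabla^2 F(\Sigma)\preceq \mu^{-1} I$. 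It therefore suffices to show $\nabla^2 F(\Sigma)\preceq \alpha\ell\,I$ for all $\Sigma=\Sigma(\theta_0+\omega)$ with $\|\omega\|\leq\epsilon$.

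The heart of the argument is the evaluation of $\nabla^2 F(\Sigma)$ for a spectral sum. Writing $\Sigma=U\Lambda U^\top$ with eigenvalues $\lambda_1,\ldots,\lambda_m>0$ and setting $\tilde V=U^\top V U$, the divided-difference (Daleckii--Krein) formula for the second derivative of $F(\Sigma)=\tr(\phi(\Sigma))$ (see Appendix~\ref{sec:spec}) gives
$$
\nabla^2 F(\Sigma)[V,V] \;=\; \sum_{i,j} \frac{\phi'(\lambda_i)-\phi'(\lambda_j)}{\lambda_i-\lambda_j}\,\tilde V_{ij}^2,
$$
with the coefficient read as $\phi''(\lambda_i)$ when $\lambda_i=\lambda_j$. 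Each coefficient is symmetric in $(i,j)$, so I may take $\lambda_i\geq\lambda_j$ and apply \eqref{eq:techcond} to bound it by $\tfrac{\ell}{2}(\phi''(\lambda_i)+\phi''(\lambda_j))\leq \ell\max_k\phi''(\lambda_k)=\ell\|\phi''(\Sigma)\|$, where the final equality uses that $F\in\cE^m$ forces $\phi$ strictly convex, hence $\phi''>0$ on the spectrum. Consequently $\nabla^2 F(\Sigma)[V,V]\leq \ell\|\phi''(\Sigma)\|\sum_{i,j}\tilde V_{ij}^2=\ell\|\phi''(\Sigma)\|\,\|V\|_F^2$, the last step using that orthogonal conjugation preserves the Frobenius norm.

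Finally, for $\|\omega\|\leq\epsilon$ and $\Sigma=\Sigma(\theta_0+\omega)$ we have $\|\phi''(\Sigma)\|\leq\alpha$ by definition of $\alpha$, whence $\nabla^2 F(\Sigma)\preceq\alpha\ell\,I$ on all of $\S^m$. Inverting this operator inequality as in the first paragraph yields $\nabla^2 F^*(L)[V,V]\geq\tfrac{1}{\alpha\ell}\|V\|_F^2$ for every $V$, which restricted to $\cL$ is exactly $\tfrac{1}{\alpha\ell}$-strong convexity of $\overline M$ on the $\epsilon$-ball. I expect the main obstacle to lie in the second step above, namely cleanly justifying the identity $\nabla^2 F^*=[\nabla^2 F]^{-1}$ and invoking the Daleckii--Krein formula with the correct divided-difference bookkeeping; once those are in place, the remaining estimate is a uniform scalar bound handed to us directly by the hypothesis \eqref{eq:techcond}.
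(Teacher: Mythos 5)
Your proposal is correct and follows the same architecture as the paper's proof: reduce strong convexity of $\overline M$ to a uniform lower bound on $\nabla^2 F^*(L)$ restricted to $\cL$, pass to the unrestricted operator via orthonormality of the $A_i$, invert through the identity $\nabla^2 F^*(L)=[\nabla^2 F(\Sigma)]^{-1}$, and finish by showing $\nabla^2 F(\Sigma)[H,H]\leq \ell\,\|\phi''(\Sigma)\|\,\|H\|_F^2\leq\alpha\ell\,\|H\|_F^2$ on the $\epsilon$-ball. The one place you diverge is the last estimate: the paper cites Proposition~3.1 of \cite{juditsky2008large} for the bound $\nabla^2 F(\Sigma)[H,H]\leq\ell\,\tr(\phi''(\Sigma)H^2)$ and then applies the trace inequality $\tr(AB)\leq\|A\|\tr(B)$ from \cite{fang1994inequalities}, whereas you derive the same conclusion from scratch via the Daleckii--Krein divided-difference representation $\nabla^2 F(\Sigma)[V,V]=\sum_{i,j}\tfrac{\phi'(\lambda_i)-\phi'(\lambda_j)}{\lambda_i-\lambda_j}\tilde V_{ij}^2$, bounding each coefficient by $\tfrac{\ell}{2}(\phi''(\lambda_i)+\phi''(\lambda_j))\leq\ell\max_k\phi''(\lambda_k)$. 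Your route is self-contained and makes the role of hypothesis \eqref{eq:techcond} completely transparent (it is exactly a bound on the divided differences of $\phi'$); the paper's route is shorter on the page and its intermediate bound $\ell\,\tr(\phi''(\Sigma)H^2)$ is in principle sharper before the operator-norm relaxation, though both yield the same final constant $\tfrac{1}{\alpha\ell}$. The anticipated obstacles you flag (justifying $\nabla^2 F^*=[\nabla^2 F]^{-1}$ by differentiating $\nabla F\circ\nabla F^*=\mathrm{id}$, and the divided-difference bookkeeping) are handled correctly; the paper justifies the inversion the same way, via the inverse mapping theorem and Proposition~\ref{prop:Fstar}(d).
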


Proposition~\ref{prop:kappabound} reduced everything to finding $\alpha$ and $\ell$. We now briefly present how this result can be used in our three leading examples. \\[-1mm]

\noindent\textit{(A) Linear covariance models.} In this case, $F_B(\Sigma)=\tr(\phi(\Sigma))$ with $\phi(x)=x^2/2$ for $x> 0$, and  condition \eqref{eq:techcond} holds with $\ell=1$. Moreover, $\phi''(x)=1$ and so $\alpha=\|I_m\|=1$. By Proposition~\ref{prop:kappabound}, for every $\epsilon>0$, $\overline M$ is $\mu$-strongly convex in the $\epsilon$-ball with $\mu=1$. By Theorem~\ref{th:finite}, for any $\delta\in (0,1)$, if $n\geq 8\log(\tfrac{2d}{\delta})$ then
$$
\|\widehat\theta_n-\theta_0\|\;\leq\;4\|\Sigma_0\|\sqrt{\frac{2d}{n}\log\left(\frac{2d}{\delta}\right)}.
$$

	
\medskip

\noindent\textit{(B) Inverse covariance models.} In this case $F(\Sigma)=\tr(\phi(\Sigma))$ with $\phi(x)=-\log(x)$ for $x>0$ and $L=-\Sigma^{-1}$. As for linear covariance models, condition \eqref{eq:techcond} holds with $\ell=1$. Since $\phi''(x)=1/x^2$, $\phi''(\Sigma)=\Sigma^{-2}=L^2$ and we need a bound on $\alpha=\max_{\|\omega\|\leq \epsilon} \|L^{2}(\theta_0+\omega)\|$. We have
$$
\|L^2(\theta_0+\omega)\|\;=\;\|L(\theta_0+\omega)\|^2\;=\;\|L_0+L(\omega)\|^2\;\leq\;(\|L_0\|+\|L(\omega)\|)^2.
$$
We also have that 
$\max_{\|\omega\|\leq \epsilon}\|L(\omega)\|\leq \max_{\|\omega\|\leq \epsilon}\|L(\omega)\|_F\leq\epsilon$
and thus $\alpha\leq (\|L_0\|+\epsilon)^2$. It follows that $\overline M$ is $\mu$-strongly convex in the $\epsilon$-ball with 
$$
\mu\;=\;\frac{1}{\alpha}\;\geq\;\frac{1}{(\|L_0\|+\epsilon)^2}\;=\;\frac{\lambda_{\min}^2(\Sigma_0)}{(1+\epsilon\lambda_{\min}(\Sigma_0))^2}.
$$
By Theorem~\ref{th:finite}, for any $\delta\in (0,1)$, if $n\geq 8\log(\tfrac{2d}{\delta})$ then
$$
\|\widehat\theta_n-\theta_0\|\;\leq\;\frac{4\|\Sigma_0\|(1+\epsilon \lambda_{\min(\Sigma_0)})^2}{\lambda^2_{\min}(\Sigma_0)}\sqrt{\frac{2d}{n}\log\left(\frac{2d}{\delta}\right)}.
$$

\medskip
\noindent\textit{(C) Models linear in the matrix logarithm.} In this case $F(\Sigma)=\tr(\phi(\Sigma))$ with $\phi(x)=x\log(x)-x$ for $x>0$ and $L=\log(\Sigma)$. As for previous two cases, condition \eqref{eq:techcond} holds with $\ell=1$. Since $\phi''(x)=1/x$, $\phi''(\Sigma)=\Sigma^{-1}=e^{-L}$ and we get
\begin{equation}\label{aux:sigLlog}
\|e^{-L(\theta_0+\omega)}\|\;\leq \;e^{\|L_0+L(\omega)\|}\;\leq\;e^{\|L_0\|+\|L(\omega)\|}	
\end{equation}
and so $\alpha\leq e^{\|L_0\|+\epsilon}$. To explain the first inequality in \eqref{aux:sigLlog}, denote the eigenvalues of $L(\theta_0+\omega)$ by $\lambda_1\geq \lambda_2\geq \cdots\geq \lambda_m$. Then $\|e^{-L(\theta_0+\omega)}\|=e^{-\lambda_m}$. On the other hand, $\|L(\theta_0+\omega)\|=\|L(\theta)+L(\omega)\|$ is equal to $\max\{|\lambda_1|, |\lambda_m|\}$. The second inequality follows from the triangle inequality. It then follows that $\overline M$ is $\mu$-concave in the $\epsilon$-ball with 
$$
\mu\;\geq\;\frac{1}{e^{\|L_0\|+e}}\;=\;\frac{1}{e^{\epsilon}\|\Sigma_0\|}.
$$
By Theorem~\ref{th:finite}, for any $\delta\in (0,1)$, if $n\geq 8\log(\tfrac{2d}{\delta})$ then
$$
\|\widehat\theta_n-\theta_0\|\;\leq\;4\|\Sigma_0\|^2e^{\epsilon} \sqrt{\frac{2d}{n}\log\left(\frac{2d}{\delta}\right)}.
$$

%
%

\section{Mixed convex constraints}\label{sec:mixed}

It is evident that our analysis extends to models defined by arbitrary (closed) convex restrictions in $L=\nabla F(\Sigma)$, preserving the convex nature of the problem stated in equation \eqref{eq:mainmax}. However, in certain scenarios a portion of the restrictions may be easier expressed in $\Sigma$, while another part is better represented in $L$. We now explore how to handle such situations and present several theoretical implications. Our findings heavily rely on the geometric considerations underlying the mixed parametrization in exponential families \cite{barndorff:78}, as well as the study of the mixed convex exponential family setup of \cite{lauritzen2022locally}. Additionally, we provide new, more direct proofs of relevant geometric statements, aiming to make this theory more accessible.

\subsection{Mixed parametrization}

Consider a split of $\Sigma \in \S^m$  into two parts $\Sigma_A$ and $\Sigma_B$ and the corresponding split $L=(L_A,L_B)$. For instance, $\Sigma_A$ could consist of all the diagonal entries of $\Sigma$ and $\Sigma_B$ be the off-diagonal entries. In this section, we consider models that are given by convex restrictions on $\Sigma_A$ and $L_B$.  We start by discussing some motivating examples. 

\begin{ex}
	\cite{lauritzen2022locally} discussed a general family of such examples in the case when $L=-\Sigma^{-1}$. Their main motivation was to study Gaussian graphical models, given by zero restrictions on some entries of $L$, together with nonnegativity restrictions on the complementary entries of $\Sigma$. This framework leads to a natural notion of positive dependence for Gaussian graphical models. Additionally, a similar setting can be employed to test the equality of two Gaussian distributions within a given graphical model, as described in Example~3.6 in \cite{lauritzen2022locally}; see also \cite{djordjilovic2018searching}.
\end{ex}

\begin{ex}\label{ex:}
	Doubly Markov Gaussian models are defined by zero restrictions on $\Sigma$ and $L=-\Sigma^{-1}$; \cite{boege2023geometry}. Models of this form were first discussed by \cite{pearl1994can} in the context of Gaussian directed acyclic graph (DAG) models. In particular, $(\Sigma^{-1})_{ij}=0$ if $i$ and $j$ are not connected by an arrow and they have no common child in $G$. Similarly, $(\Sigma)_{ij}=0$ if $i$ and $j$ are not connected by an arrow and they have no common ancestor in $G$. This shows that zeros in $\Sigma$ and $\Sigma^{-1}$ carry information about the underlying graph. In Section~\ref{sec:other}, we briefly motivate in this context also models with zeros in higher powers of $\Sigma^{-1}$. 
\end{ex}

\begin{ex}
	Modelling correlations is used commonly in econometric GARCH models. In a recent paper, \cite{archakov2021new} considered a model in which $\Sigma_{ii}=1$ for all $i=1,\ldots,m$ and potential further linear restrictions are imposed on the off-diagonal entries of $L=\log\Sigma$. Another example of this kind, is to consider correlation matrices with zero restrictions on some off-diagonal entries of their inverse. 
\end{ex}

For a given subset of entries of a symmetric matrix, we consider the projection of the positive definite cone on those entries
$$
(\S^m_{+})_A\;:=\;\;\{\Sigma_A:\Sigma=(\Sigma_A,\Sigma_B)\in \S^m_+\}.
$$
We have a similar definition for $\L^m_+=\nabla F(\S^m_+)$
$$
(\L^m_{+})_B\;:=\;\;\{L_B:L=(L_A,L_B)\in \L^m_+\}.
$$
\begin{thm}\label{th:mixed}
	Let $F\in \cE^m$ be essentially smooth with $L=\nabla F(\Sigma)$. Then there is a one-to-one map between $\Sigma\in \S^m_+$ and  $(\Sigma_A,L_B)\in (\S^m_+)_A\times (\L^m_+)_B$.
\end{thm}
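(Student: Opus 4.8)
The plan is to exhibit an explicit bijection by decomposing the problem into two halves and gluing them via a fibered optimization argument. The idea is the standard mixed-parametrization strategy from exponential families, recast here in terms of the Legendre duality already established in Proposition~\ref{prop:legendre}: the map $\nabla F$ is a bijection between $\S^m_+$ and $\L^m_+$, so the content of the theorem is really that we may freely prescribe the $A$-block in the $\Sigma$-coordinates and the $B$-block in the $L$-coordinates, and recover a unique compatible point.

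First I would fix a target $(\Sigma_A^0,L_B^0)\in (\S^m_+)_A\times(\L^m_+)_B$ and try to construct a unique $\Sigma\in\S^m_+$ whose $A$-block equals $\Sigma_A^0$ and such that the $B$-block of $L=\nabla F(\Sigma)$ equals $L_B^0$. The cleanest route is variational. Let $\cL$ be the affine subspace of matrices whose $B$-block is fixed at $L_B^0$ (so $\cL^\perp$ consists of matrices supported on the $A$-entries), and consider minimizing $F(\Sigma)-\langle A_0,\Sigma\rangle$ over the slice $\{\Sigma\in\S^m_+:\Sigma_A=\Sigma_A^0\}$, where $A_0$ is any matrix with $B$-block $L_B^0$. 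This is exactly the dual problem of the form \eqref{eq:dual} analyzed in Proposition~\ref{cor:dual}, and the KKT conditions \eqref{eq:KKTL} force precisely $\Sigma_A=\Sigma_A^0$ together with $\nabla F(\Sigma)\in\cL$, i.e. $L_B=L_B^0$. Uniqueness of the optimizer follows from strict convexity of $F$ on $\S^m_+$ (Lemma~\ref{lem:11}). This shows the map $\Sigma\mapsto(\Sigma_A,L_B)$ is injective and that every target in the correct range is hit, provided the optimum exists.

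Existence is where essential smoothness is essential, and this will be the main obstacle. Because $F$ is essentially smooth, Proposition~\ref{prop:legendre} gives that $F^*$ is Legendre with ${\rm int}(\dom(F^*))=\L^m_+$, and the coercivity machinery behind Theorem~\ref{th:optexist} applies. Concretely, I would argue that along any sequence in the feasible slice approaching the boundary of $\S^m_+$, essential smoothness makes $\|\nabla F(\Sigma_k)\|_F\to\infty$, which drives the objective to $+\infty$ and keeps the minimizer in the interior $\S^m_+$; this is the analogue of the sub-level-set boundedness argument (Fact~2.11 in \cite{bauschke1997legendre}) used in the proof of Theorem~\ref{th:optexist}. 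The one point requiring care is that the feasible slice is nonempty and that its image under the objective is bounded below with attained infimum: nonemptiness is immediate since $\Sigma_A^0\in(\S^m_+)_A$ means some positive definite matrix has that $A$-block, and coercivity plus lower semicontinuity of $F$ then yields a minimizer.

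Finally I would assemble the two directions into a genuine bijection. The forward map $\Phi:\Sigma\mapsto(\Sigma_A,\nabla F(\Sigma)_B)$ lands in $(\S^m_+)_A\times(\L^m_+)_B$ by definition of the two projected sets. The variational construction above produces a well-defined inverse $\Psi$ sending each $(\Sigma_A^0,L_B^0)$ to the unique optimizer, and the KKT characterization shows $\Phi\circ\Psi=\mathrm{id}$; injectivity of $\Phi$, i.e. that two points of $\S^m_+$ with the same $(\Sigma_A,L_B)$ must coincide, follows because both would solve the same strictly convex program and hence be equal, giving $\Psi\circ\Phi=\mathrm{id}$. I expect the bulk of the write-up to be verifying existence carefully in the essentially smooth regime; the uniqueness and inversion steps are then formal consequences of strict convexity and Legendre duality.
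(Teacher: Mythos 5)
Your overall architecture is the paper's: fix the target $(\Sigma_A^0,L_B^0)$, realize it as the solution of a strictly convex Bregman projection problem, read off both blocks from the KKT conditions, and get injectivity/surjectivity formally from uniqueness of optimizers. The paper runs this through the primal problem --- minimize $D_F(S,L)$ over $\{L:L_B=L_B^0\}$ with $S\in\S^m_+$ chosen so that $S_A=\Sigma_A^0$ --- and then existence is a one-line citation of Theorem~\ref{th:optexist}, since $S\in\S^m_+$ makes the hypothesis $(S)_\cL\in(\S^m_+)_\cL$ automatic. You instead run it through the dual problem of Proposition~\ref{cor:dual}. That is a legitimate variant, but it is exactly where your existence argument breaks.

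The gap is the claim that, along a sequence in the feasible slice approaching the boundary of $\S^m_+$, essential smoothness ``drives the objective to $+\infty$.'' A gradient blowing up at the boundary does not force the function itself to blow up, and the paper's example (C) is a counterexample to your claim: $F_C(\Sigma)=-\tr(\Sigma-\Sigma\log\Sigma)$ is essentially smooth yet extends finitely and continuously to all of $\overline\S^m_+$ (the paper points this out explicitly after the definition of essential smoothness). So for the matrix-logarithm case --- the very instance that yields the Archakov--Hansen parametrization in Section~\ref{sec:archakov1} --- your mechanism for keeping the minimizer in the open cone fails. The correct dual-side argument is different: essential smoothness implies the subdifferential of $F$ is empty on the boundary of $\dom(F)$ (equivalently, the one-sided directional derivative toward the interior is $-\infty$ there), so a boundary point of $\overline\S^m_+$ can never minimize $F(\Sigma)-\<A_0,\Sigma\>$ over a convex set meeting $\S^m_+$. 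Separately, your coercivity claim on the slice is unsupported with ``$A_0$ any matrix with $B$-block $L_B^0$'': boundedness of sub-level sets of $F(\Sigma)-\<A_0,\Sigma\>$ requires $A_0\in{\rm int}(\dom(F^*))=\L^m_+$ (this is the content of Fact~2.11 of \cite{bauschke1997legendre}), which you can arrange precisely because $L_B^0\in(\L^m_+)_B$, but you must say so. There is also a small sign slip: for $\cL=\{L:L_B=L_B^0\}$ the space $\cL^\perp$ consists of matrices supported on the $B$-entries (zero on the $A$-block), not the $A$-entries; your slice $\{\Sigma:\Sigma_A=\Sigma_A^0\}$ is nevertheless the right one. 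The cleanest repair is simply to switch to the primal problem and invoke Theorem~\ref{th:optexist} with a positive definite $S$, as the paper does.
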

This result is essentially Theorem 5.34 in \cite{barndorff:78} but the assumption that $\dom(F)$ is open is replaced with the assumption that $F$ is essentially smooth. Our proof is also more difect than \cite{barndorff:78}. 
\begin{proof}
	Fix any $\overline \Sigma_A\in (\S^m_+)_A$ and any $\overline L_B\in (\L^m_+)_B$. Let $S\in \S^m_+$ be any element such that $S_A=\overline \Sigma_A$. Consider the problem: minimize $D(S,L)$ subject to the linear restrictions $L_B=\overline L_B$ on $L$. By Theorem~\ref{th:optexist}, this problem has a unique optimum $\widehat \Sigma$ with  $\widehat L=\nabla F(\widehat \Sigma)\in \L^m_+$ that satisfies the KKT conditions: $\widehat\Sigma_A=S_A=\overline \Sigma_A$ and $\widehat L_B=\overline L_B$. This shows that  every $(\overline \Sigma_A,\overline L_B)\in (\S^m_+)_A\times (\L^m_+)_B$ can be associated to a unique $\widehat \Sigma\in \S^m_+$. It is now enough to show that every $\widehat \Sigma\in \S^m_+$ is an image of some  $(\overline \Sigma_A,\overline L_B)\in (\S^m_+)_A\times (\L^m_+)_B$, but this is clear: take $\overline\Sigma_A=\widehat \Sigma_A$ and $\overline L_B=\widehat L_B$.
\end{proof}
%

Like in the standard exponential families, the most surprising part of this result is the following conclusion.
\begin{cor}[Variational independence]\label{cor:mixed}
	Assume conditions of Theorem~\ref{th:mixed} and let $S\in \S^m_+$, $\overline L\in \L^m_+$ be arbitrary. There exists a unique positive definite matrix $\widehat \Sigma$ that agrees with $S$ on $A$-entries and such that $\widehat L=\nabla F(\widehat \Sigma)$ agrees with $\overline L$ on the $B$-entries. 
\end{cor}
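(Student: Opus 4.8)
The plan is to read this off directly from Theorem~\ref{th:mixed}, which already furnishes a bijection between $\S^m_+$ and the product $(\S^m_+)_A \times (\L^m_+)_B$. The only work is to check that the data $(S,\overline L)$ supply a legitimate target point in that product and then to invoke the bijection for existence and uniqueness simultaneously. In other words, the corollary is a repackaging of the theorem, and the point worth emphasizing is that only the $A$-projection of $S$ and the $B$-projection of $\overline L$ actually enter.

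First I would extract the relevant projections. Since $S\in \S^m_+$, its $A$-part $S_A$ lies in $(\S^m_+)_A$ by the very definition of that set as the image of $\S^m_+$ under projection onto the $A$-entries; likewise, since $\overline L\in \L^m_+$, its $B$-part $\overline L_B$ lies in $(\L^m_+)_B$. Hence the pair $(S_A,\overline L_B)$ is an element of $(\S^m_+)_A\times (\L^m_+)_B$, which is precisely the codomain of the map in Theorem~\ref{th:mixed}.

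Next I would apply Theorem~\ref{th:mixed}: because the map $\Sigma\mapsto(\Sigma_A,L_B)$ with $L=\nabla F(\Sigma)$ is one-to-one from $\S^m_+$ onto $(\S^m_+)_A\times (\L^m_+)_B$, there is exactly one $\widehat\Sigma\in\S^m_+$ whose image equals $(S_A,\overline L_B)$. Unwinding what this image means gives $\widehat\Sigma_A=S_A$ and $\widehat L_B=\overline L_B$ for $\widehat L=\nabla F(\widehat\Sigma)$, i.e.\ $\widehat\Sigma$ agrees with $S$ on the $A$-entries and $\widehat L$ agrees with $\overline L$ on the $B$-entries. Surjectivity of the map gives existence and injectivity gives uniqueness, so both assertions are obtained at once.

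I do not anticipate a genuine obstacle here, since all the analytic content (existence of the optimizer, the KKT characterization, and essential smoothness forcing the solution into $\S^m_+$) has already been absorbed into Theorem~\ref{th:mixed}. The only conceptual subtlety is the \emph{variational independence} being highlighted: the $A$-block of $\Sigma$ and the $B$-block of $L$ can be prescribed completely independently within their respective projected cones, exactly as mean and canonical parameters combine in the mixed parametrization of an exponential family. I would close by remarking that this is what makes such mixed-constraint models well posed, and I would be careful to state that the conclusion uses only $S_A$ and $\overline L_B$, so that $S$ and $\overline L$ need not themselves satisfy any compatibility relation.
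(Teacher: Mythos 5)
Your proposal is correct and matches the paper's intended derivation exactly: the corollary is stated without a separate proof precisely because it is the bijection of Theorem~\ref{th:mixed} read off at the point $(S_A,\overline L_B)\in(\S^m_+)_A\times(\L^m_+)_B$, with surjectivity onto the full product giving existence and injectivity giving uniqueness. Your added observation that only the projections $S_A$ and $\overline L_B$ enter, with no compatibility condition between $S$ and $\overline L$, is the same point the paper emphasizes in the remark immediately following the corollary.
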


\begin{rem}
	Note that the proof of Theorem~\ref{th:mixed} gives the procedure to obtain the unique $\widehat \Sigma$ in Corollary~\ref{cor:mixed}. For the given $S\in \S^m_+$ and $\overline L\in \L^m_+$ minimize $D(S,L)$ subject to $L_B=\overline L_B$. Thus, $\widehat \Sigma$ can be found using any numerical procedure that solves this problem; see Section~\ref{sec:numeric}.
\end{rem}

For general $F\in \cE^m$ the conclusion of Theorem~\ref{th:mixed} does not hold. A counterexample can be recovered from Example~\ref{ex:B13}. Take $\Sigma_A=(\Sigma_{11},\Sigma_{22},\Sigma_{33},\Sigma_{12},\Sigma_{23})$ and $L_B=L_{13}$. Take $\Sigma$ to be $S_n$ from   Example~\ref{ex:B13} and let $L$ be the identity matrix. The corresponding $(\Sigma_A,L_B)$ is the matrix $\widehat\Sigma$ in  Example~\ref{ex:B13} but this one is \emph{not} positive definite. Perhaps it is also useful to see explicitly how this result works in a smaller example when Theorem~\ref{th:mixed} can be applied.
\begin{ex}
	Consider example (A) with $\nabla F_A(\Sigma)=-\Sigma^{-1}$. Let $m=2$ and consider the split $\Sigma_A=(\Sigma_{11},\Sigma_{22})$, $L_B=-(\Sigma^{-1})_{12}$. Here $(\S^2_+)_A=(0,\infty)^2$ and $(\S^2_+)_B=\R$. Suppose $\Sigma_{11}=\Sigma_{22}=1$ and denote $x=\Sigma_{12}$ and $y=(\Sigma^{-1})_{12}$. The claim is that $y$ can be chosen arbitrarily but here it follows clearly because $y=-x/(1-x^2)$ is a one-to-one mapping from $(-1,1)$ to $\R$.
\end{ex} 

It is interesting to note that we also have the equivalent of likelihood orthogonality; see Proposition~3.20 in \cite{sundberg}. In our setting, fixing the mixed parametrization $(\Sigma_A,L_B)$, the claim is that $\nabla_{\Sigma_A}\nabla_{L_B} D(S,L)$ is zero, namely, the Hessian of the Bregman divergence, when expressed in this parametrization, is block diagonal. This follows because $\nabla_{L_B} D(S_n,L)=\nabla_{L_B}F^*(L)-S_B=\Sigma_B-S_B$ and $\nabla_{\Sigma_A}(\Sigma_B-S_B)=0$ by variational independence.

\subsection{Unrestricted parametrization of correlation matrices}\label{sec:archakov1}

Motivated by temporal modelling of correlation matrices, \cite{archakov2021new} studied ways to map the set of correlation matrices in $\S_+^m$ into a Euclidean space $\R^{m(m-1)/2}$. It is not immediately obvious that such a mapping exists but the fact that $\log\Sigma$ maps $\S^m_+$ to $\S^m$ suggested a natural strategy. Their main result states that for any selection of the off-diagonal entries of $L\in \S^m$ there is a unique correlation matrix $R$ such that $L=\log R$.  

Our first observation is that this result is a special case of  Theorem~\ref{th:mixed}. Indeed, $F_C\in \cE^m$ is essentially smooth and so this theorem applies. Take $\Sigma_A$ to be the vector containing the diagonal entries of $\Sigma$ and $L_B$ to be the vector containing the off-diagonal entries of $L=\log(\Sigma)$. Then there is a one-to-one map from $\Sigma\in \S^m_+$ to $(\Sigma_A,L_B)\in (\S^m_+)_A\times (\L^m_+)_B$. In particular, we can fix $\Sigma_A\in (\S^m_+)_A$ to be the vector of ones and $L_B\in (\L^m_+)_B$ to be an arbitrary real vector to get Theorem~1 in  \cite{archakov2021new}. In this case $\L^m_+=\S^m$ and so
\begin{equation}\label{eq:ABcones}
(\S^m_+)_A\;=\;(0,\infty)^m\qquad\mbox{and}\qquad (\L^m_+)_B\;=\;\R^{m(m-1)/2}.	
\end{equation}

Note that the essential part of the above construction was not that $\L^m_+=\S^m$ (which motivated using the matrix logarithm) but that $(\L^m_+)_B=\R^{m(m-1)/2}$. This latter condition can be obtained for much simpler transformations. Indeed,  when $L = -\Sigma^{-1}$ as in Example~(A), it is always possible to choose the off-diagonal entries arbitrarily and adjust the diagonal entries to ensure the negative definiteness of $L$. Consequently, despite the fact that $\L_+^m=-\S_+^m$, we find that $(\L_+^m)_B = \mathbb{R}^{m(m-1)/2}$, which implies that the off-diagonal entries of the matrix inverse serves as an unconstrained parameterization for correlation matrices.
\begin{cor}
For any choice of the off-diagonal entries  there exists a unique correlation matrix $R$ whose inverse has precisely these off-diagonal entries. \end{cor}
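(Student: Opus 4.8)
The plan is to realize this corollary as a direct instantiation of the mixed parametrization of Theorem~\ref{th:mixed}, applied to Example~(A). Recall that $F_A$ is essentially smooth with $\nabla F_A(\Sigma)=-\Sigma^{-1}$, so $\L^m_+=-\S^m_+$. First I would split $\Sigma$ into $\Sigma_A$, the vector of diagonal entries, and $\Sigma_B$, the off-diagonal entries, with the corresponding split $L=(L_A,L_B)$ of $L=-\Sigma^{-1}$. Theorem~\ref{th:mixed} then yields a one-to-one map between $\Sigma\in\S^m_+$ and $(\Sigma_A,L_B)\in(\S^m_+)_A\times(\L^m_+)_B$, which is the structure I intend to exploit.

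The second step is to identify the two factor sets. On the $A$-side, the diagonal of a positive definite matrix is an arbitrary vector of positive reals (every such vector is attained already by a diagonal matrix), so $(\S^m_+)_A=(0,\infty)^m$; in particular the all-ones vector $\mathbf 1$ lies in it. On the $B$-side I would argue that $(\L^m_+)_B=\R^{m(m-1)/2}$: given any prescribed off-diagonal entries, one can always force the matrix into $-\S^m_+$ by taking its diagonal entries sufficiently negative, via a strict diagonal-dominance estimate, so every off-diagonal pattern arises as the off-diagonal part of some element of $-\S^m_+$.

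With these two identifications in hand the conclusion is immediate. Given target off-diagonal entries $c=(c_{ij})_{i<j}$ for the inverse, I would set $L_B=-c$ (since $L=-\Sigma^{-1}$, the off-diagonals of $L$ are the negatives of those of $\Sigma^{-1}$) and $\Sigma_A=\mathbf 1$. Both lie in the respective factor sets, so the bijection of Theorem~\ref{th:mixed} produces a unique $R=\Sigma\in\S^m_+$ whose diagonal is all ones, hence a correlation matrix, and whose inverse has off-diagonal entries exactly $c$. Uniqueness of $R$ is inherited directly from injectivity of the mixed-parametrization map.

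The argument is essentially bookkeeping once Theorem~\ref{th:mixed} is available; the only step carrying genuine content is the verification that $(\L^m_+)_B=\R^{m(m-1)/2}$, i.e.\ the diagonal-dominance completion to negative definiteness. This is the analogue, for the matrix inverse, of the surjectivity of the off-diagonal logarithm map exploited in \cite{archakov2021new}, and it is precisely what makes the off-diagonal entries of $R^{-1}$ an unconstrained parametrization of correlation matrices.
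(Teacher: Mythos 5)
Your proof is correct and follows essentially the same route as the paper: it instantiates Theorem~\ref{th:mixed} for $F_A$ with the diagonal/off-diagonal split, identifies $(\S^m_+)_A=(0,\infty)^m$ and $(\L^m_+)_B=\R^{m(m-1)/2}$ via the diagonal-dominance adjustment, and then fixes $\Sigma_A=\mathbf 1$. The paper's Section~\ref{sec:archakov1} makes exactly this observation, so no further work is needed.
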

A clear advantage of working with the inverse, rather than with the matrix logarithm, is that this is a well-understood algebraic map with efficient methods to compute it. Explicit numerical procedures are provided in \cite{amendola2021likelihood,llorens2022projected}. To see how it works,  let $\cL$ be the set of all $L\in \S^m$ with fixed off-diagonal entries $L_B$. Then $\cL^\perp$ is the set of matrices with zeros on the diagonal.  Take $A_0$ to be any matrix in $\cL$ and $S_n=I_m$. Solving the dual problem in  Proposition~\ref{cor:dual} with $F(\Sigma)=-\log\det(\Sigma)$ we obtain the unique $\widehat\Sigma$ with ones on the diagonal and such that $\widehat L\in \cL$.

\subsection{Estimation under mixed convex constraints}
Suppose now that we fix closed convex restrictions $ \cC_A$ on $\Sigma_A$ and closed convex restrictions $ \cC_B$ on $L_B$. Thus, the model is given by
\begin{equation}\label{eq:mixedm}
\{\Sigma\in \S^m:\; \Sigma_A\in \cC_A\cap (\S^m_+)_A\mbox{ and }L_B\in \cC_B\cap (\L^m_+)_B\}.	
\end{equation}
 We propose the following 2-step method to fit a model given by convex restriction on $\Sigma_A$ and convex restrictions on $L_B$.
\begin{enumerate}
	\item [(S1)] Minimize the Bregman divergence $D_F(S_n,L)=F(S_n)+F^*(L)-\<L,S_n\>$ subject to $L_B\in \cC_B\cap (\L^m_+)_B$. This is a convex optimization problem and denote the corresponding unique optimizer (if it exists) by $\widehat L$.
	\item [(S2)] Given $\widehat L$, minimize the Bregman divergence $D_F(\Sigma,\widehat L)=F(\Sigma)+F^*(\widehat L)-\<\widehat L,\Sigma\>$ subject to $\Sigma_A\in \cC_A\cap (\S^m_+)_A$. This is again a convex optimization problem and denote the corresponding minimizer (if it exists) by $\widecheck \Sigma$. 
\end{enumerate}
It is clear immediately by construction that $\widecheck \Sigma_A$ satisfies the given constraints on $\Sigma_A$. We also have the following, perhaps more surprising, result, which shows that $\widecheck\Sigma$ satisfies both the restrictions on $\Sigma_A$ and on $L_B$.
\begin{prop}\label{prop:2step}
Suppose $F\in \cE^m$ is essentially smooth. Let $\widehat L$ and $\widecheck \Sigma$ be as defined above and assume $\widehat L$ and $\widecheck L=\nabla F(\widecheck \Sigma)$ both lie in $\L^m_+$. Then  $\widecheck L_B=\widehat L_B\in \cC_B\cap (\L^m_+)_B$. In other words, $\widecheck\Sigma$ lies in the model \eqref{eq:mixedm}. 
\end{prop}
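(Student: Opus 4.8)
The plan is to show that the two-step procedure's final output $\widecheck\Sigma$ inherits the $B$-constraint from step (S1), using variational independence (Corollary~\ref{cor:mixed}) together with the KKT characterization of each optimization step. First I would analyze step (S2). Since we minimize $D_F(\Sigma,\widehat L)=F(\Sigma)+F^*(\widehat L)-\<\widehat L,\Sigma\>$ over $\Sigma_A\in \cC_A\cap(\S^m_+)_A$, and this objective depends on $\Sigma$ only through $F(\Sigma)-\<\widehat L,\Sigma\>$, the optimization affects only the $A$-part of $\Sigma$; the $B$-part of $\Sigma$ is free. The key point is to read off the first-order (KKT) conditions for this problem: the gradient of the objective with respect to $\Sigma$ is $\nabla F(\Sigma)-\widehat L=\widecheck L-\widehat L$, and since only the $\Sigma_A$ coordinates are constrained (the $\Sigma_B$ coordinates are unconstrained), stationarity in the free $B$-directions forces the $B$-block of $\widecheck L-\widehat L$ to vanish, i.e. $\widecheck L_B=\widehat L_B$.

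The cleanest way to make the previous paragraph rigorous is to invoke the mixed parametrization directly. By Theorem~\ref{th:mixed}, the map $\Sigma\mapsto (\Sigma_A,L_B)$ is a bijection from $\S^m_+$ onto $(\S^m_+)_A\times(\L^m_+)_B$. In step (S2) we hold $L_B$ effectively fixed and vary $\Sigma_A$: more precisely, minimizing $D_F(\Sigma,\widehat L)$ over the free $B$-coordinates of $\Sigma$ (with $\Sigma_A$ held at any candidate value) is an unconstrained convex problem in $\Sigma_B$ whose unique stationary point satisfies $\nabla_{\Sigma_B}\big(F(\Sigma)-\<\widehat L,\Sigma\>\big)=0$, which is exactly $\nabla F(\Sigma)_B=\widehat L_B$, that is $L_B=\widehat L_B$. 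Hence at the optimum $\widecheck\Sigma$ we automatically have $\widecheck L_B=\widehat L_B$. Since step (S1) produced $\widehat L_B\in \cC_B\cap(\L^m_+)_B$, we conclude $\widecheck L_B\in \cC_B\cap(\L^m_+)_B$; combined with the constraint enforced in (S2) that $\widecheck\Sigma_A\in \cC_A\cap(\S^m_+)_A$, this places $\widecheck\Sigma$ in the model \eqref{eq:mixedm}.

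In carrying this out I would be careful about two technical points. First, the minimization in (S2) is over $\Sigma$ with $\Sigma_A$ constrained but $\Sigma_B$ genuinely free, so I must justify that a minimizer exists with $\widecheck L=\nabla F(\widecheck\Sigma)\in\L^m_+$; this is exactly the hypothesis of the proposition ($\widehat L$ and $\widecheck L$ both lie in $\L^m_+$), which lets me use that the objective is strictly convex and differentiable on $\S^m_+$ and that first-order conditions are both necessary and sufficient. Second, I must check that the $B$-directions are indeed unconstrained so that stationarity gives the full $B$-block equation; this is built into the model structure where $\cC_A$ only restricts $\Sigma_A$.

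The main obstacle I anticipate is purely bookkeeping: cleanly separating the constrained $A$-directions from the free $B$-directions in the gradient condition, and verifying that essential smoothness (via Proposition~\ref{prop:legendre} and Theorem~\ref{th:optexist}) guarantees the optimizer stays in $\S^m_+$ so the gradient identity $\nabla F(\widecheck\Sigma)=\widecheck L$ is valid. Once stationarity in the free $B$-coordinates is established, the identity $\widecheck L_B=\widehat L_B$ is immediate and the rest is just collecting the two constraint memberships. Essentially no hard analysis is required beyond appealing to the variational-independence structure already proved.
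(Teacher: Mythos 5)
Your proposal is correct and follows essentially the same route as the paper: the paper likewise writes the first-order (KKT) condition for step (S2) with gradient $\widecheck L-\widehat L$ and then perturbs $\widecheck\Sigma$ by small $T$ with $T_A=0$ (possible since $\S^m_+$ is open and the $B$-coordinates are unconstrained) to conclude $\widecheck L_B=\widehat L_B$, after which membership in the model follows from (S1) and (S2) exactly as you say. Your appeal to Theorem~\ref{th:mixed} is an unnecessary (though harmless) detour --- the paper needs only the stationarity-in-free-directions argument, not variational independence itself.
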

\begin{proof}
	If the optimum $\widecheck \Sigma$ in (S2) exists,  by convexity of the function and the underlying set $\cC_A\cap (\S^m_+)_A$, $\widecheck\Sigma$ must satisfy the KKT conditions
	$$
	\<\widecheck L-\widehat L,\Sigma-\widecheck \Sigma\>\geq 0\quad \mbox{for all }\Sigma\mbox{ s.t. }\;\;\Sigma_A\in \cC_A.
	$$
	Since $\S^m_+$ is open, a small perturbation $\widecheck \Sigma+T$ also lies in $\S^m_+$. Assuming that $T_A=0$ we can even conclude that $\widecheck \Sigma_A+T_A\in \cC_A\cap (\S^m_+)_A$. Since $T$ with $T_A=0$ is small but otherwise arbitrary, we conclude that $\widecheck L_B=\widehat L_B$. Moreover, $\widehat L_B\in \cC_B\cap (\L^m_+)$ because $\widehat L$ solves (S1). This concludes the proof.
\end{proof}
 
According to Proposition~\ref{prop:2step}, our procedure yields a point satisfying both types of constraints by solving two convex problems (S1) and (S2). Some statistical properties of the corresponding estimator can also be obtained following Section~\ref{sec:stat} above and Section~6 in \cite{lauritzen2022locally}. Let the Bregman estimator (BE) be the estimator $\widetilde \Sigma$ obtained by minimizing the Bregman divergence $D_F(S_n,\Sigma)$ subject to $\Sigma_A\in \cC_A$ and $L_B\in \cC_B$ (this is in general a non-convex optimization problem). In a way analogous to Theorem~6.1 in \cite{lauritzen2022locally} and with essentially the same proof, we expect that the estimations $\widetilde\Sigma_n$ and $\widecheck\Sigma_n$ are asymptotically equivalent, in the sense that $\sqrt{n}(\widetilde \Sigma_n-\widecheck \Sigma_n)=o_P(1)$. The importance of this comes from the fact that the Bregman estimator is an M-estimator and so its asymptotics under general restrictions is quite well understood \cite{geyer1994asymptotics}.

\section{Numerical optimization}\label{sec:numeric}

In this section we discuss numerical strategies to solve the problem \eqref{eq:mainmax} or the dual problem \eqref{eq:dual}.  In the specific example when $F(\Sigma)=-\log\det\Sigma$ there exist numerous approaches including coordinate descent and block-coordinate algorithms. Here we focus on general purpose solutions.

\subsection{Projected gradient descent}

In general, the second order information is hard to obtain for most choices of $F$ (c.f. \cite{lewis2001twice}). We thus first discuss a  first-order method. The simplest solution is to perform the projected gradient descent algorithm for the dual problem \eqref{eq:dual}: minimize $F(\Sigma)-\<A_0,\Sigma\>$ subject to $\Sigma-S_n\in \cL^\perp$. Note that if $\cL$ is a linear subspace, we can take $A_0=0$, which simplifies the formulas below. 

We initiate the algorithm at $\Sigma^{(0)}=S_n$, which is dually feasible at least as long as $S_n\in \S^m_+$. The gradient of $F(\Sigma)-\<A_0,\Sigma\>$ is $L-A_0$.  Denote by $\Pi_\cL^\perp(L-A_0)$ the orthogonal projection of $L-A_0\in \S^m$ to $\cL^\perp$. We move from $\Sigma^{(t)}$ to $\Sigma^{(t+1)}$ using the formula
$$
\Sigma^{(t+1)}\;=\;\Sigma^{(t)}-s_t \Pi_\cL^\perp(L^{(t)}-A_0).
$$
If $\Sigma^{(t)}$ is feasible, that is, if $\Sigma^{(t)}-S_n\in \cL^\perp$ then $\Sigma^{(t+1)}$ is also feasible and so the algorithm produces a sequence of feasible points. We can set the step size $s_t$ using backtracking, assuring in this way that the value of the function increases at each step. Since $F$ is strictly convex, this algorithm eventually converges to the optimum. This follows from the fact that the projected gradient descent is a special case of proximal gradient algorithms; see Section~10.4 in \cite{beck} for relevant results. 

Alternatively, it is possible to solve the primal problem \eqref{eq:mainmax}. We start by any feasible $L^{(0)}\in \cL\cap \L^m_+$. Then, at each iterate $L^{(t)}$, we project the gradient $-\Sigma^{(t)}+S_n$ onto $\cL$. We denote this projection by $\Pi_\cL$. Then we move
\begin{equation}
L^{(t+1)}\;=\;L^{(t)}-s_t \Pi_{\cL}(\Sigma^{(t)}-S_n).	
\end{equation}
Again, the step size can be chosen using backtracking. Note that here in some situations finding a feasible starting point $L_0$ may be problematic.

The main bottleneck in all these cases is that in each step we need to map between $\Sigma$ and $L$. This requires computing the spectral decomposition and so the complexity at each step is at least $O(m^3)$, which may be prohibitive if $m$ is very large. In the special case, when $F(\Sigma)=\tr(\phi(\Sigma))$ is a spectral sum, \cite{han2018stochastic} proposed to study the stochastic gradient descent based on stochastic truncation of the Chebyshev expansion of $F$ (or its conjugate). This and other techniques to approximate $F$ are discussed in Section~4.4 of \cite{higham}.

\subsection{Iterative Bregman projection}

Another possible approach is to employ an iterative projection algorithm as discussed by \cite{bauschke1997legendre,dhillon2008matrix},  which is structurally similar to iterative proportional scaling in exponential families. Observe that, by Theorem~\ref{th:KKTg}, the minimizer of $D_F(S,L)$ over the hyperplane $H$ given by $\<B,L\>=c$ must satisfy $\widehat\Sigma-S=\lambda B$ for some $\lambda\in \R$. Equivalently, we can solve the one-dimensional problem 
\begin{equation}\label{eq:1d}
\mbox{minimize}\quad F(S+\lambda B)-\lambda c\qquad \quad\mbox{such that }\;\lambda\in \R.	
\end{equation}
Suppose now that $\cL=\bigcap_{i=1}^k H_i$, where $H_i$ is a hyperplane $\<B_i,L\>=c_i$. We could iteratively ``project'' on $H_1,\ldots,H_k$ by minimizing the Bregman divergence to each $H_i$. Thus, we could run an iterative algorithm that starts with $\Sigma^{(0)}=S$ and, for $t\geq 0$,
$$L^{(t+1)}\;=\;\arg\min_{L\in H_i} D_F(\Sigma^{(t)},L),\qquad \Sigma^{(t+1)}=\nabla F^*(L^{(t+1)}),	
$$
or, equivalently,
\begin{equation}\label{eq:alg}
\lambda^{(t+1)}\;=\;\arg\min_{\lambda\in \R} F(\Sigma^{(t)}+\lambda B_i)-\lambda c_i,\qquad \Sigma^{(t+1)}=\Sigma^{(t)}+\lambda^{(t+1)} B_i,	
\end{equation}
where $i$ cycles around $\{1,\ldots,k\}$ in some, potentially random, order. The following result justifies this algorithm.
\begin{prop}[Theorem~8.1, \cite{bauschke1997legendre}]
	Suppose that $F\in \cE^m$ is essentially smooth and $S\in \S^m_+$. If $\dom(F)=\S^m_+$  then the algorithm in \eqref{eq:alg} converges to the global optimum of \eqref{eq:mainmax}. 
\end{prop}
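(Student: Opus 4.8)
The plan is to recognise the scheme \eqref{eq:alg} as cyclic Bregman projection onto the hyperplanes $H_1,\ldots,H_k$ whose intersection is $\cL$, equivalently as exact cyclic line‑minimisation of the dual objective $F(\Sigma)-\<A_0,\Sigma\>$ from Proposition~\ref{cor:dual} along the fixed directions $B_1,\ldots,B_k$. Since $\cL=\bigcap_i H_i$, its linear direction is $\bigcap_i B_i^\perp$, so $\cL^\perp=\mathrm{span}\{B_1,\ldots,B_k\}$; telescoping the updates $\Sigma^{(t+1)}=\Sigma^{(t)}+\lambda^{(t+1)}B_i$ from $\Sigma^{(0)}=S$ then shows every iterate remains in the dually feasible slice $(S+\cL^\perp)\cap\S^m_+$. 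Because $S\in\S^m_+$, Theorem~\ref{th:optexist} guarantees a \emph{unique} optimum $\widehat\Sigma$ of \eqref{eq:mainmax}, characterised by the KKT conditions $\widehat L=\nabla F(\widehat\Sigma)\in\cL$ and $\widehat\Sigma-S\in\cL^\perp$ of Theorem~\ref{th:KKTg}; the goal is to prove $\Sigma^{(t)}\to\widehat\Sigma$.

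First I would check that each one‑dimensional problem \eqref{eq:1d} has a unique minimiser attained at an interior point. The map $\lambda\mapsto F(\Sigma^{(t)}+\lambda B_i)$ is strictly convex, and since $\dom(F)=\S^m_+$ is open while $F$ is closed, its value blows up as $\Sigma^{(t)}+\lambda B_i$ approaches $\partial\S^m_+$; essential smoothness additionally forces $\nabla F$ to diverge there, so the minimum is an interior stationary point, unique by strict convexity. By Theorem~\ref{th:KKTg} applied with $\cL$ replaced by $H_i$ and $S$ by $\Sigma^{(t)}$, the projection satisfies $L^{(t+1)}\in H_i$ together with $\Sigma^{(t+1)}-\Sigma^{(t)}\in H_i^\perp=\mathrm{span}(B_i)$, which recovers the form of the update in \eqref{eq:alg}.

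The core is a Fej\'er‑monotonicity estimate. Working in the conjugate variable, the projection minimises $D_{F^*}(L,L^{(t)})$ over $L\in H_i$ (using $D_F(\Sigma^{(t)},L)=D_{F^*}(L,\nabla F(\Sigma^{(t)}))$), so the three‑point identity for $D_{F^*}$ gives, for the target $\widehat L\in\cL\subseteq H_i$,
$$
D_{F^*}(\widehat L,L^{(t)})\;=\;D_{F^*}(\widehat L,L^{(t+1)})+D_{F^*}(L^{(t+1)},L^{(t)})+\<\Sigma^{(t+1)}-\Sigma^{(t)},\,\widehat L-L^{(t+1)}\>.
$$
Here $\Sigma^{(t+1)}-\Sigma^{(t)}\in\mathrm{span}(B_i)$ while $\widehat L-L^{(t+1)}$ lies in the linear direction $B_i^\perp$ of $H_i$, so the last term vanishes and $D_{F^*}(\widehat L,L^{(t)})=D_F(\Sigma^{(t)},\widehat\Sigma)$ is nonincreasing. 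Hence the increments $D_{F^*}(L^{(t+1)},L^{(t)})$ are summable and tend to $0$, and the iterates stay in the sublevel set $\{\Sigma:D_F(\Sigma,\widehat\Sigma)\le D_F(\Sigma^{(0)},\widehat\Sigma)\}$, which is a compact subset of $\S^m_+$ by the boundedness of Bregman sublevel sets for $\widehat\Sigma\in{\rm int}\dom(F)$ (Fact~2.11 in \cite{bauschke1997legendre}, exactly as in the proof of Theorem~\ref{th:optexist}). Passing to a cluster point $\bar\Sigma\in\S^m_+$, the vanishing increments together with cycling through all $i$ force $\bar L=\nabla F(\bar\Sigma)\in H_i$ for every $i$, i.e. $\bar L\in\cL$; combined with $\bar\Sigma-S\in\cL^\perp$ this is precisely the KKT characterisation, so $\bar\Sigma=\widehat\Sigma$, and uniqueness of the cluster point yields convergence of the whole sequence.

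The delicate part is the boundary behaviour: one must rule out the iterates, and their cluster points, drifting to $\partial\S^m_+$ where $\nabla F$ is undefined and the divergences degenerate. This is exactly where the hypotheses bite --- openness of $\dom(F)=\S^m_+$ makes $F$ boundary‑coercive, while essential smoothness, via the Legendre property of Proposition~\ref{prop:legendre}, makes $\nabla F$ a homeomorphism of $\S^m_+$ onto $\L^m_+$ that diverges at the boundary, guaranteeing both that the line searches stay interior and that gradients of cluster points remain in $\L^m_+$. Organising the summability bookkeeping for the \emph{cyclic} (as opposed to simultaneous) updates, together with the generalised Pythagorean equality above, is the technical heart; this is precisely the content of Theorem~8.1 in \cite{bauschke1997legendre}, which we may invoke directly once the hypotheses $F\in\cE^m$ essentially smooth, $\dom(F)=\S^m_+$, and $S\in\S^m_+$ are verified.
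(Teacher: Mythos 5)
The paper does not actually prove this proposition: it is stated as a direct import of Theorem~8.1 of \cite{bauschke1997legendre}, so there is no internal argument to compare against. Your proposal goes further and reconstructs that proof, and the reconstruction is correct in outline --- it is the standard argument: existence and interiority of each one-dimensional Bregman projection (which is just Theorem~\ref{th:optexist} applied to the single hyperplane $H_i$ with the positive definite ``data'' $\Sigma^{(t)}$); the generalized Pythagorean identity $D_{F^*}(\widehat L,L^{(t)})=D_{F^*}(\widehat L,L^{(t+1)})+D_{F^*}(L^{(t+1)},L^{(t)})$, obtained from the three-point identity together with the orthogonality of $\Sigma^{(t+1)}-\Sigma^{(t)}\in{\rm span}(B_i)$ and $\widehat L-L^{(t+1)}\in B_i^\perp$; Fej\'er monotonicity and summability of the increments; compactness of the relevant Bregman sublevel set via Fact~2.11 of \cite{bauschke1997legendre}; and identification of cluster points through the KKT conditions of Theorem~\ref{th:KKTg}. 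You also correctly isolate where each hypothesis bites: $S\in\S^m_+$ for existence of $\widehat\Sigma$, essential smoothness for the Legendre property of Proposition~\ref{prop:legendre}, and openness of $\dom(F)=\S^m_+$ for boundary coercivity of $F$ (a closed convex function with open effective domain must blow up at the boundary), which keeps iterates and cluster points strictly inside $\S^m_+$. Two steps are waved at rather than carried out, though both are routine: (i) deducing $\Sigma^{(t+1)}-\Sigma^{(t)}\to 0$ from $D_{F^*}(L^{(t+1)},L^{(t)})\to 0$ requires first confining the iterates to a compact subset of $\S^m_+$ and using continuity of $\nabla F$ there; and (ii) concluding $\bar L\in H_i$ for \emph{all} $i$ needs the cyclic bookkeeping of passing to a subsequence on which the cycle position is fixed and then propagating the vanishing increments through one full cycle. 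Since you close by invoking Theorem~8.1 of \cite{bauschke1997legendre} after verifying its hypotheses, your write-up strictly subsumes what the paper does.
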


\begin{ex}
	For illustration, we show how this could be used in the case when $F(\Sigma)=-\log\det(\Sigma)+\tfrac12\tr(\Sigma^2)$, which we motivate later in Proposition~\ref{prop:E}. Fix a graph $G$ with nodes $\{1,\ldots,m\}$ and consider the linear space
$$
\cL_G=\{L\in \S^m:\; L_{ij}=0\mbox{ if }i\neq j\mbox{ and } ij
\notin E\}.
$$
In this case the hyperplanes on which we project are defined by $c=0$ and $B=e_ie_j^\top +e_je_i^\top$ for $i\neq j$ and $ij\notin G$. In the $t$-th iteration we try to minimize $F(\Sigma^{(t)}+\lambda (e_ie_j^\top +e_je_i^\top))$ with respect to $\lambda\in \R$. Let $A=\{i,j\}$ and $C=\{1,\ldots,m\}\setminus \{i,j\}$. We write $\Sigma_{A|C}:=\Sigma_{A,A}-\Sigma_{A,C}\Sigma_{C,C}^{-1}\Sigma_{C,A}$. It is useful to observe that standard Schur complement arguments give that 
$$
\det (\Sigma^{(t)}+\lambda (e_ie_j^\top +e_je_i^\top))\;=\;\det(\Sigma^{(t)}_{C,C})\cdot \det\left(\Sigma^{(t)}_{A|C}+\begin{bmatrix}
	0 & \lambda\\
	\lambda & 0
\end{bmatrix}\right).
$$
Moreover, 
$$
\tfrac{1}{2}\tr((\Sigma^{(t)}+\lambda (e_ie_j^\top +e_je_i^\top))^2)=\tfrac12\tr((\Sigma^{(t)})^2)+2\lambda \Sigma_{ij}^{(t)}+\lambda^2.
$$
Denote $W=\Sigma^{(t)}_{A|C}\in \S^2$. Then
$$
\frac{{\rm d}}{{\rm d}\lambda}F(\Sigma^{(t)}+\lambda(e_ie_j^\top +e_je_i^\top))\;=\;\frac{2\lambda+2W_{12}}{\det(W)-2\lambda W_{12}-\lambda^2}+2\Sigma_{ij}^{(t)}+2\lambda.
$$
Equating this to zero results in a cubic polynomial equation, which can be solved exactly. By convexity of the problem, there may be only one real solution $\hat\lambda$ that leads to positive definite $\Sigma^{(t+1)}$. The condition is simple to check: $W_{11}W_{22}>(W_{12}+\hat \lambda)^2$.
\end{ex}

\section{Sparsity and positive definite completion}\label{sec:sparse}

 One important concept that has been extensively studied in high-dimensional statistics is sparsity \cite{hastie2015statistical}. In the context of covariance matrix estimation, Gaussian graphical models have proven to be particularly successful \cite{lau96}. It is customary to encode the sparsity pattern by a graph $G$ with $m$ nodes and edge set $E$. We denote the corresponding linear subspace by $\cL_G$:
  $$
 \cL_G\;:=\;\{L\in \S^m:\;L_{ij}=0\mbox{ if }ij\notin E\}.
 $$
 Note that we can take $A_0=0$ in this case and $\cL_G^\perp$ is given by zero restrictions on the complementary entries of $L$.
  
 \begin{ex}[Gaussian graphical models]\label{ex:ggm}
The multivariate Gaussian distribution $N_m(0,\Sigma)$ forms an exponential family with canonical parameter $K=\Sigma^{-1}$. Given a sample $X_1,\ldots,X_n$ from this model, the sufficient statistics is $S_n=\tfrac1n\sum_{i=1}^n X_i X_i^\top$. Denote the entries of $S_n$ by $S_{ij}$. For a given graph $G$, the Gaussian graphical model is given by imposing zero restrictions on some off-diagonal entries of $K=\Sigma^{-1}$
	$$
	M(G)=\{\Sigma\in \S^m_+: (\Sigma^{-1})_{ij}=0\mbox{ for }ij\notin E\}.
	$$
\end{ex}

Gaussian graphical models have made a significant impact on multivariate statistics and are commonly used even for non-Gaussian data. The elegant SKEPTIC approach introduced by \cite{skeptic} allows to extend the Gaussian setting to Gaussian copulas with minimal loss of efficiency and no loss of interpretability. Gaussian graphical models are also routinely employed beyond this favourable scenario. In such cases, the Gaussian log-likelihood is considered as a suitable loss function, and the zero restrictions correspond to conditional independence assumptions, albeit under the assumption of linear conditional independence. Interestingly, as demonstrated in \cite{rossell2020dependence}, some distributional settings, such as elliptical distributions, preserve certain non-linear conditional independence information when partial correlations vanish.

Zero restrictions on $\Sigma$ have also been explored in the literature, leading to the covariance graph model \cite{pearl1994can,kauermann1996dualization,chaudhuri2007estimation,drton2008graphical}. More recently, zero restrictions on $\log(\Sigma)$ have been considered in  \cite{battey17,battey2019sparsity,rybak2021sparsity}, with additional geometric motivations presented in \cite{pavlov2023logarithmically}. All of these models fall under the category of entropic models. We discuss yet another example.

\begin{ex}[Spatial autoregressive model (SAR)]\label{ex:sar}
This example is adapted from \cite{lesage2007matrix}.	Consider the spatial autoregression model: $Sy=S\beta+\varepsilon$, where the vector $y$ contains $m$ observations on the dependent variable, each associated with one region or point in space. The matrix $X$ represents $m\times k$ full column rank matrix of constants, which correspond to observations of $k$ independent variables for each region. We assume $\varepsilon\sim N_m(0,\sigma^2 I_m)$. The vector $\beta$ is the vector of parameters. In the SAR model, the matrix $S$ takes the form $S=I_m-\rho D$, where $D$ represents $m\times m$ nonnegative spatial weight matrix and $\rho$ reflects the magnitude of spatial dependence. If $i$-th unit interacts with the $j$-th unit in some meaningful way, they are called neighbours, which defined a graph $G$ with $m$ nodes. It is typically assumed that $D$ is symmetric and $D_{ij}=0$ for units that are not neighbours. Note that in this model the covariance matrix of $y$ takes the form $\Sigma=\sigma^2(I_m-\rho D)^{-2}$. As a result $\Sigma^{-1/2}\in \cL_G$. The corresponding entropic covariance model is generated by $F(\Sigma)=-2\tr(\sqrt{\Sigma})$ with $L=\nabla F(\Sigma)=-\Sigma^{-1/2}$; in Table~\ref{tab:1} we see that this is the dual construction to one of our running examples $F(\Sigma)=\tr(\Sigma^{-1})$.   
\end{ex}


 The next result follows from Corollary~\ref{cor:dual}.
 \begin{prop}\label{prop:sparsedual}
Let $S_n$ be the sample covariance matrix and consider the problem of maximizing $\g_n(L)=F^*(L)-\<S,L\>$ subject to $L\in \cL_G\cap \L^m_+$. 	The dual problem is to minimize $F(\Sigma)$ subject to $\Sigma_{ij}=S_{ij}$ for all $ij\in E$.
 \end{prop}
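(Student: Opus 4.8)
The plan is to recognize Proposition~\ref{prop:sparsedual} as a direct specialization of the general duality statement in Proposition~\ref{cor:dual} to the sparsity setting, where the affine subspace $\cL$ is replaced by the linear subspace $\cL_G$. First I would observe that $\cL_G$ is a \emph{linear} subspace (not merely affine), so I may take $A_0 = 0$ in Corollary~\ref{cor:dual}; this is precisely the remark made just before Example~\ref{ex:ggm}. With this choice, the dual problem in \eqref{eq:dual} becomes: minimize $F(\Sigma) - \<0,\Sigma\> = F(\Sigma)$ subject to $\Sigma - S_n \in \cL_G^\perp$ and $\Sigma \in \S^m_+$. The objective already matches the claimed form, so the only remaining task is to translate the constraint $\Sigma - S_n \in \cL_G^\perp$ into the stated entrywise conditions.

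The key step is therefore to compute $\cL_G^\perp$ explicitly and unwind what membership means. Since $\cL_G = \{L \in \S^m : L_{ij} = 0 \text{ if } ij \notin E\}$, its orthogonal complement under the trace inner product $\<A,B\> = \tr(AB)$ consists of those symmetric matrices supported on the complementary (non-edge) entries, that is, $\cL_G^\perp = \{M \in \S^m : M_{ij} = 0 \text{ for all } ij \in E\}$. This is exactly the observation recorded after the definition of $\cL_G$, namely that $\cL_G^\perp$ is given by zero restrictions on the complementary entries. Consequently, the condition $\Sigma - S_n \in \cL_G^\perp$ says precisely that $(\Sigma - S_n)_{ij} = 0$, i.e.\ $\Sigma_{ij} = S_{ij}$, for every edge $ij \in E$. (One must include the diagonal entries $ii$ among the edges; the convention that $G$ may carry self-loops, as in the positive-definite completion example, ensures the diagonal is handled consistently, but for the standard sparsity pattern the diagonal is always constrained.)

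I would then simply assemble these two observations: the dual of the primal maximization over $\cL_G \cap \L^m_+$ is, by Proposition~\ref{cor:dual} with $A_0 = 0$, the minimization of $F(\Sigma)$ over $\Sigma \in \S^m_+$ subject to $\Sigma_{ij} = S_{ij}$ for all $ij \in E$, which is the claimed statement.

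I do not anticipate a genuine obstacle here, as the result is essentially a dictionary translation of the abstract duality into the combinatorial language of graphs. The one point requiring a small amount of care is the correct identification of $\cL_G^\perp$ and the bookkeeping of which entries (off-diagonal versus diagonal, edges versus non-edges) are pinned down; a sign or index error there would misstate the completion constraint. Beyond that, the proof is immediate from Proposition~\ref{cor:dual}.
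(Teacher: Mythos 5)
Your proof is correct and matches the paper's own reasoning: the paper simply notes that the result "follows from Proposition~\ref{cor:dual}," having already remarked that for the linear subspace $\cL_G$ one may take $A_0=0$ and that $\cL_G^\perp$ consists of matrices vanishing on the edge entries, which is exactly the translation you carry out. Your additional care about the diagonal/self-loop convention is a sensible bookkeeping check but does not change the argument.
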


\begin{ex}\label{ex:L13}
    Suppose $m=3$ and suppose that $L_{13}=0$ is the only constraint defining $\cL_G$. Let $S\in \S^3_+$ be given by
    $$
    S=\begin{bmatrix}
    	4 & 1 &2\\
    	1 & 4 & 3\\
    	2 & 3 & 4
    \end{bmatrix}\quad\mbox{with}\qquad G=\overset{1}{\bullet}-\overset{2}{\bullet}-\overset{3}{\bullet}\quad\mbox{and}\qquad \widehat\Sigma=\begin{bmatrix}
    	4 & 1 &?\\
    	1 & 4 & 3\\
    	? & 3 & 4
    \end{bmatrix}.
    $$ By Proposition~\ref{prop:sparsedual}, irrespective of the form of $F$, the Bregman estimator $\widehat \Sigma$ is equal to $S$ on all the entries apart from the entries $(1,3)$ and $(3,1)$. The KKT conditions require that 
    $$
    (\nabla F(\widehat \Sigma))_{13}\;=\;0.
    $$
    We now show how this equation can be solved for our four running examples together with the new example $F_E(\Sigma)=F_A(\Sigma)+F_B(\Sigma)$ introduced later in Proposition~\ref{prop:E}. For $\nabla F_A=-\Sigma^{-1}$  we get 
$\widehat\Sigma_{13}\;=\;S_{12}S_{22}^{-1}S_{23}\;=\;\frac34$.     
For $\nabla F_B(\Sigma)=\Sigma$, $\widehat \Sigma_{13}=0$ as the resulting $\widehat \Sigma$ is positive definite (c.f. Example~\ref{ex:B13}).  If $F(\Sigma)$ is the negative von Neumann divergence, we need to rely on numerical computations developed in Section~\ref{sec:numeric} obtaining
$$
\widehat \Sigma\;=\;\begin{bmatrix}
    	4.0000 & 1.0000 & \mathbf{0.4298}\\
    	1.0000 & 4.0000 & 3.0000\\
    	\mathbf{0.4298} & 3.0000 & 4.0000
    \end{bmatrix}\qquad \widehat L\;=\;\log(\widehat \Sigma)\;=\;\begin{bmatrix}
    	1.3520 & 0.2721 & \mathbf{0.0000}\\
    	0.2721 &0.9305 &0.9806\\
    	\mathbf{0.0000} & 0.9806 & 0.9695
    \end{bmatrix}.
$$
For $\nabla F_D(\Sigma)=-\Sigma^{-2}$,  
$\widehat\Sigma_{13}\;=\;\frac13(64-\sqrt{3754})\;\approx\;0.91$. Finally, for $\nabla F_E(\Sigma)=\Sigma-\Sigma^{-1}$, $\widehat \Sigma_{13}\approx 0.105$.
\end{ex}

\section{Jordan algebras}\label{sec:jordan}

In this section we cover a set of particularly nice linear constraints with some history in covariance matrix estimation. This allows us to recognize various scattered results in a unifying way. We start with a definition; see \cite{jensen1988covariance}.
\begin{defn}
	A linear space $\cL\subseteq \S^m$ is called a Jordan algebra of symmetric matrices if 
	\begin{equation}\label{eq:jordan}
\forall A,B\in \cL \qquad \tfrac12(AB+BA)\in \cL.	
\end{equation}
\end{defn}
\noindent For a Jordan algebra $\cL$, if $A\in \cL$ then $A^2\in \cL$ (or $A^n\in \cL$ in general for $n\geq 1$). This condition is in fact equivalent to \eqref{eq:jordan} by the fact that $\cL$ is a linear subspace and by the identity
$AB+BA\;=\;(A+B)^2-A^2-B^2$.

To motivate Jordan algebras in statistics we provide the following examples. 
 \begin{ex}
	Consider the correlation model with an additional restriction that all off-diagonal entries are equal to each other. This is known as the equicorrelation model \cite{amendola2021likelihood}. In Proposition~2 of \cite{archakov2021new}, it was shown that the logarithm of an equicorrelation matrix has equal off-diagonal entries. This result is a special case of Proposition~\ref{prop:jordan} by observing that the set of matrices with equal diagonal entries and equal off-diagonal entries forms a Jordan algebra.\end{ex}
	
There is a natural way to generalize this example.

\begin{ex}
Let $\cS_m$ be the symmetric group on $\{1,\ldots,m\}$, which we identify with the set of permutation matrices in $\R^{m\times m}$. Fix a subgroup $\cG\subseteq \cS_m$ and consider the set 
$$\{\Sigma\in \S^m:\;U\Sigma U^\top =\Sigma\mbox{ for all }U\in \cG\}.$$   	
This set of restrictions has been studied as the RCOP model in  \cite{hojsgaard2008graphical}. In the case when $\cG=\cS_m$ we get matrices whose all diagonal elements and all off-diagonal elements are equal. 
\end{ex}

Another simple example, discussed by \cite{chiu1996matrix}, is when $\cL=\{\alpha I_m+\beta U: \alpha,\beta\in \R\}$ and $U$ is an idempotent matrix ($U^2=U$), or more generally, $U^2=\gamma U$ for some $\gamma\in \R$. 	\cite{szatrowski2004patterned} also gives an overview of interesting linear restrictions that correspond to Jordan algebras and motivates them statistically; see also \cite{rubin1982finding}. The following example, motivated by spatial modelling, is also interesting.
\begin{ex} Recall the spatial autoregressive model (SAR) presented in Example~\ref{ex:sar}. The main motivation in \cite{lesage2007matrix} was to come up with a version of this model, which is easier to handle statistically and computationally. Their starting observation was that in practice of SAR modelling the matrix $D$ is row normalized so that $D\mathbf 1_m=\mathbf 1_m$. Row-stochastic spatial weight matrices have a long history of application in spatial statistics; e.g.   \cite{ord1975estimation}. Consider the subspace
	$$
	\cL\;=\;\{L\in \S^m:\; \exists \alpha\in \R\mbox{ s.t. }L\bs 1_m=\alpha \bs 1_m	\}.
	$$
This $\cL$ satisfies conditions of Proposition~\ref{prop:jordan}. Note that $$(I_m-\rho D)^{-1}\;=\;I_m+\sum_{n\geq 1} \rho^n D^n.$$
In the MESS model of \cite{lesage2007matrix}, the matrix $S=(I_m-\rho D)$ in usual SAR formulation is replaced with the matrix exponential $\exp(-\alpha D)$, for $\alpha>0$, in which case 
$$
(\exp(-\alpha D))^{-1}\;=\;\exp(\alpha D)\;=\;I_m+\sum_{n\geq 1} \frac{\alpha^n}{n!}D^n,
$$
which corresponds to exponential decrease of influence for higher-order neighbors. Note that the MESS model  assumes $\log\Sigma\in \cL_G\cap \cL$. The fact that $\cL$ is a Jordan algebra, greatly simplifies computations.  
\end{ex}

Szatrowski showed in a series of papers \cite{szatrowski1978explicit,szatrowski1980necessary,szatrowski2004patterned}  that the MLE for linear Gaussian covariance models has an explicit representation, i.e., it is a known linear combination of entries of the sample covariance matrix, if and only if  $\cL$ forms a Jordan algebra. Furthermore, Szatrowski proved that for this restrictive model class the MLE is given in the closed form. We will generalize this result here by first generalizing the main result of \cite{jensen1988covariance}. 


\begin{prop}\label{prop:jordan}
	Suppose $F\in \cE^m$ takes the form $F(\Sigma)=\tr(\phi(\Sigma))$ with $\phi$ analytic on $(0,+\infty)$.  Suppose $\cL$ is a linear subspace which satisfies \eqref{eq:jordan} and  $I_m\in \cL$,  then $\Sigma\in \cL\cap \S^m_+$ if and only if  $L=\nabla F(\Sigma)\in \cL\cap \L^m_+$. 
\end{prop}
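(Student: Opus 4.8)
The plan is to reduce the whole statement to two elementary observations and then run them in both directions. The first observation is that, since $F(\Sigma)=\tr(\phi(\Sigma))$, we have $\nabla F(\Sigma)=\phi'(\Sigma)$ by Proposition~\ref{prop:Fder}, so the claim is really about how the matrix function $\phi'$ interacts with $\cL$. The second, and key, observation is that for any fixed symmetric matrix $A$ the matrix function $\phi'(A)$ coincides with $p(A)$ for a single polynomial $p$ (the Hermite/Lagrange interpolant of $\phi'$ at the eigenvalues of $A$): writing $A=\sum_i\lambda_iP_i$ with spectral projectors $P_i$, one has $\phi'(A)=\sum_i\phi'(\lambda_i)P_i=\sum_ip(\lambda_i)P_i=p(A)$. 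Combined with the fact, recorded just after \eqref{eq:jordan}, that a Jordan algebra satisfies $A\in\cL\Rightarrow A^n\in\cL$ for all $n\geq 1$, together with the hypothesis $I_m\in\cL$ covering the constant term, this shows $\cL$ is closed under applying \emph{any} polynomial: $A\in\cL$ implies $p(A)\in\cL$. Both implications then become statements about membership of powers.

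First I would prove the forward direction. Suppose $\Sigma\in\cL\cap\S^m_+$. Because $\Sigma\in\S^m_+$, by the definition $\L^m_+=\nabla F(\S^m_+)$ we automatically get $L=\nabla F(\Sigma)=\phi'(\Sigma)\in\L^m_+$, so it only remains to show $L\in\cL$. Writing $\phi'(\Sigma)=p(\Sigma)=\sum_kc_k\Sigma^k$ for the interpolating polynomial $p$, each power $\Sigma^k$ lies in $\cL$ (using $\Sigma^0=I_m\in\cL$ and the power-closure of the Jordan algebra), and hence the linear combination $L=p(\Sigma)\in\cL$. This yields $L\in\cL\cap\L^m_+$.

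For the converse, suppose $L=\nabla F(\Sigma)\in\cL\cap\L^m_+$. Since $L\in\L^m_+=\nabla F(\S^m_+)$, the matrix $\Sigma=\nabla F^*(L)$ is the unique preimage and lies in $\S^m_+$ by Lemma~\ref{lem:11} and Proposition~\ref{prop:Fstar}(d), so positive definiteness is free. For the conjugate, $F^*(L)=\tr(\phi^*(L))$ by Lemma~\ref{lem:spectrlcconv}, whence $\Sigma=\nabla F^*(L)=(\phi^*)'(L)=(\phi')^{-1}(L)$; here strict convexity gives $\phi''>0$, so $\phi'$ is strictly increasing and $(\phi')^{-1}$ is a genuine function on the range of $\phi'$, which contains every eigenvalue of $L$ (they equal $\phi'(\mu_i)$ for the eigenvalues $\mu_i$ of $\Sigma$). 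Applying the polynomial argument once more, $\Sigma=(\phi')^{-1}(L)=q(L)\in\cL$ for the interpolant $q$, so $\Sigma\in\cL\cap\S^m_+$.

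The hard part is really only the bookkeeping of the constant term: the Jordan condition \eqref{eq:jordan} guarantees closure under \emph{positive} powers, and it is precisely the assumption $I_m\in\cL$ that keeps the zeroth-order term of the interpolating polynomial inside $\cL$; without it the argument breaks. The role of analyticity of $\phi$ is comparatively mild here—it is used to guarantee that $\phi'$ and $(\phi')^{-1}$ are well defined and differentiable, so that $\nabla F$ and $\nabla F^*$ take the asserted spectral-sum form—whereas the polynomial-interpolation step itself needs only that $\phi'$ (respectively $(\phi')^{-1}$) be defined at the finitely many eigenvalues in play. The one routine point to verify cleanly is that the interpolating polynomial genuinely reproduces the matrix function via the spectral projectors, which is standard.
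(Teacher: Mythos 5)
Your proof is correct, and it takes a genuinely different (and in one respect tighter) route than the paper's. The paper expands $\phi'$ as a power series around $1$, writes $\nabla F(\Sigma)=c_0I_m+\sum_{n\geq 1}c_n(\Sigma-I_m)^n$, uses closure of the Jordan algebra under powers of $\Sigma-I_m$, and invokes the Lagrange Inversion Theorem to get analyticity of $(\phi')^{-1}$ for the converse. Your argument replaces the infinite series by a single finite interpolating polynomial $p$ with $\phi'(\Sigma)=p(\Sigma)$, determined by the eigenvalues of the particular matrix at hand, and similarly $q$ with $(\phi')^{-1}(L)=q(L)$ for the converse. Both proofs rest on the same structural facts --- closure of $\cL$ under positive powers plus $I_m\in\cL$ to absorb the constant term, and strict monotonicity of $\phi'$ so that the inverse direction makes sense --- but your version buys two things. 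First, it sidesteps a convergence issue that the paper's proof glosses over: the Taylor series of $\phi'$ around $1$ has a finite radius of convergence in examples such as $\phi'(x)=-1/x$, so the matrix series need not converge when $\Sigma$ has eigenvalues far from $1$, whereas the interpolation identity is exact with no limit to take. Second, as you observe, analyticity becomes inessential --- you only need $\phi'$ (resp.\ its inverse) to be defined at the finitely many eigenvalues involved --- so your argument proves a slightly more general statement with less machinery (no Lagrange inversion). The only point worth writing out carefully is the standard fact that for a diagonalizable $A=\sum_i\lambda_iP_i$ the Lagrange interpolant of $\phi'$ at the $\lambda_i$ reproduces $\phi'(A)$, which is exactly the definition of the matrix function used in the paper.
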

\begin{proof}
%
Since  $\phi$ is analytic on $(0,+\infty)$, $\phi'$ is also analytic and we can take its series expansion around 1:
$$
\phi'(x)=\sum_{n\geq 0}c_n (x-1)^n\qquad\mbox{for some }c_n\in \R, n\geq 0.
$$
In consequence, if $\Sigma\in \cL\cap \S^m_+$ and $\cL$ satisfies \eqref{eq:jordan} with $I_m\in \cL$ then $(\Sigma-I_m)^n\in \cL$ for all $n\geq 1$ and so
$$
\nabla F(\Sigma)\;=\;\phi'(\Sigma)\;=\;c_0I_m+\sum_{n\geq 1} c_n (\Sigma-I_m)^n\;\in \;\cL\cap \L^m_+.
$$  	
This shows the right implication. For the left implication we use the fact that $\phi''(x)>0$ (strict convexity) and so $\phi'$ is strictly increasing. The inverse of $\phi'$ is also analytic by the Lagrange Inversion Theorem (e.g. Theorem~5.4.2 of \cite{Stanley_2023}). Now we can apply exactly the same argument as above to the inverse of $\phi'$. 
\end{proof}

\begin{rem}
If $\cL$ forms a Jordan algebra and $I_m\in \cL$, then we also do not have to worry about the statistical intepretability of the linear restrictions on $L=\nabla F(\Sigma)$ because they are exactly equivalent to the same linear restrictions on $\Sigma$.	
\end{rem}

This is how Jordan algebra structure leads to trivial estimation procedures.
\begin{prop}
	Suppose that $F\in \cE^m$ and $\L^m_+=\nabla F(\S^m_+)$ is open. If $\cL$ is a Jordan algebra and $I_m\in \cL$, then the optimum in \eqref{eq:mainmax} is  given in a closed form: $\widehat L=\nabla F(\widehat \Sigma)$, where $\widehat \Sigma$ is the orthogonal projection of $S_n$ on $\cL$. 
\end{prop}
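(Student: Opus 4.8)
The plan is to show that the pair $(\widehat\Sigma,\widehat L)$ with $\widehat\Sigma=\Pi_\cL(S_n)$ the orthogonal projection of $S_n$ onto $\cL$ and $\widehat L=\nabla F(\widehat\Sigma)$ satisfies the two KKT conditions \eqref{eq:KKTL} of Theorem~\ref{th:KKTg}. Since $\L^m_+$ is open, the set $\cL\cap\L^m_+$ is relatively open and $\g_n$ is strictly concave there, so these conditions are both necessary and sufficient for optimality; consequently, exhibiting a single feasible point that satisfies them simultaneously both establishes existence and identifies it as the unique optimum. The whole proof therefore reduces to checking that $\widehat\Sigma=\Pi_\cL(S_n)$ has the required properties.

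The condition $\widehat\Sigma-S_n\in\cL^\perp$ is immediate, since by definition of the orthogonal projection $\widehat\Sigma-S_n=-\Pi_{\cL^\perp}(S_n)\in\cL^\perp$. It remains to verify $\widehat L=\nabla F(\widehat\Sigma)\in\cL\cap\L^m_+$. For this I first need $\widehat\Sigma\in\S^m_+$, so that $\widehat L$ is well defined and automatically lies in $\L^m_+=\nabla F(\S^m_+)$ by \eqref{eq:Lplus}. Granting $\widehat\Sigma\in\cL\cap\S^m_+$, the membership $\widehat L\in\cL$ is exactly the forward implication of Proposition~\ref{prop:jordan}: because $\cL$ is a Jordan algebra containing $I_m$ and $\nabla F=\phi'$ admits a power-series expansion $\phi'(x)=\sum_{n\geq0}c_n(x-1)^n$ around $1$, every power $(\widehat\Sigma-I_m)^n$ stays in $\cL$ by \eqref{eq:jordan}, whence $\nabla F(\widehat\Sigma)=c_0I_m+\sum_{n\geq1}c_n(\widehat\Sigma-I_m)^n\in\cL$. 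Combining the two conditions and invoking Theorem~\ref{th:KKTg} then finishes the argument.

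The one genuinely nontrivial step, and the main obstacle, is the positive definiteness of the projection, namely that $\Pi_\cL(S_n)\in\S^m_+$ whenever $S_n\in\S^m_+$. I would establish this from the fact that the orthogonal projection onto a Jordan algebra $\cL$ containing $I_m$ is a positive linear map fixing $I_m$: since $I_m\in\cL$ we have $\Pi_\cL(I_m)=I_m$, and positivity gives monotonicity, so $S_n\succeq\epsilon I_m$ for some $\epsilon>0$ yields $\Pi_\cL(S_n)\succeq\epsilon\,\Pi_\cL(I_m)=\epsilon I_m\succ0$. The positivity of $\Pi_\cL$ itself is the structural input. In the group-invariance (RCOP) case it follows at once by writing $\Pi_\cL(S)=\tfrac{1}{|\cG|}\sum_{U\in\cG}U S U^\top$, a convex combination of congruent copies of the positive definite $S$, hence positive definite; and the projection onto the fixed-point space coincides with this Reynolds average. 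In general it follows from the simultaneous block-diagonalization of a Jordan algebra containing the identity (cf.\ \cite{jensen1988covariance}), under which $\Pi_\cL$ acts as an averaging within equivalent blocks and therefore preserves the positive definite cone.
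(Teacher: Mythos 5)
Your proof is correct, and it takes a genuinely different --- in fact stronger --- route than the paper's. The paper argues top-down from necessary conditions: by Theorem~\ref{th:KKTg} the optimum of \eqref{eq:mainmax}, \emph{if it exists}, satisfies $\widehat L\in\cL$ and $\widehat\Sigma-S_n\in\cL^\perp$; Proposition~\ref{prop:jordan} converts $\widehat L\in\cL\cap\L^m_+$ into $\widehat\Sigma\in\cL\cap\S^m_+$; and the pair of conditions $\widehat\Sigma\in\cL$, $\widehat\Sigma-S_n\in\cL^\perp$ is exactly the characterization of the orthogonal projection. That argument identifies what the optimum must be but never verifies that $\Pi_\cL(S_n)$ is positive definite, so existence is left conditional. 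You go bottom-up: you take $\widehat\Sigma=\Pi_\cL(S_n)$ as a candidate and check the KKT conditions, which are sufficient by concavity of $\g_n$ on the relatively open set $\cL\cap\L^m_+$; this forces you to prove $\Pi_\cL(S_n)\in\S^m_+$, and your monotonicity argument from positivity of $\Pi_\cL$ together with $\Pi_\cL(I_m)=I_m$ is the right way to do it. The payoff is that you also establish existence of the optimum, not only its closed form. The one place where your argument leans on structure theory you do not fully spell out is the positivity of the orthogonal projection onto a general Jordan subalgebra containing $I_m$: the Reynolds-average identity settles the RCOP case, but the classification of Jordan subalgebras of $\S^m$ (cf.~\cite{jensen1988covariance}) also contains spin factors ${\rm span}(I_m,e_1,\ldots,e_n)$ with $e_ie_j+e_je_i=2\delta_{ij}I_m$, for which $\Pi_\cL$ is not literally an average over a group of orthogonal congruences; positivity still holds there (for a unit vector $u$ the matrix $\sum_iu_ie_i$ squares to $I_m$, hence has operator norm one, so $\tr\bigl(A\sum_iu_ie_i\bigr)\le\tr(A)$ for $A\succeq0$), but this case needs to be addressed rather than subsumed under ``averaging within equivalent blocks.'' Finally, note that both your proof and the paper's invoke Proposition~\ref{prop:jordan} and hence silently assume $F(\Sigma)=\tr(\phi(\Sigma))$ with $\phi$ analytic, a hypothesis not present in the proposition's statement.
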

\begin{proof}
	By Theorem~\ref{th:KKTg}, the optimum in \eqref{eq:mainmax}, if it exists,  is uniquely given by the pair $(\widehat \Sigma,\widehat L)\in \S^m_+\times \L^m_+$ with $\widehat L=\nabla F(\widehat \Sigma)$ satisfying \eqref{eq:KKTL}: $\widehat L\in \cL$ and $\widehat \Sigma-S_n\in \cL^\perp$.	Since $\cL$ is a Jordan algebra and $I_m\in \cL$, by Proposition~\ref{prop:jordan}, equivalently $\widehat\Sigma\in \cL$. But the condition $\widehat \Sigma\in \cL$ and $\widehat \Sigma-S_n\in \cL^\perp$ says exactly that $\widehat\Sigma$ is an orthogonal projection of $S_n$ onto $\cL$. 
\end{proof}


\section{Discussion}\label{sec:other}

In this paper we presented a flexible family of models for covariance matrices together with a canonical estimation procedure. We presented that this setting brings new insights into the geometry of covariance matrices, which can be applied to design new statistical procedures. To conclude the paper, we briefly outline some of the questions that immediately arise for future work.  
\medskip

\noindent\textit{Unrestricted parametrizations of covariance matrices.} The main motivation for using the matrix logarithm transformation in \cite{chiu1996matrix} was that it maps $\S^m_+$ bijectively to $\S^m$. Using our geometric insights, we can easily provide a tractable alternative for this transformation; an algebraic map $\nabla F$ for which $\nabla F(\S^m_+)=\S^m$. 
\begin{prop}\label{prop:E}
	Consider the map $F(\Sigma)=-\log\det(\Sigma)+\tfrac\lambda2\tr(\Sigma^2)$ for any $\lambda>0$. Then $F\in \cE^m$ is essentially smooth and $\nabla F(\Sigma)=\lambda\Sigma-\Sigma^{-1}$ maps bijectively $\S^m_+$ to $\S^m$. 
\end{prop}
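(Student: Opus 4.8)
The plan is to verify the three claims in turn: that $F \in \cE^m$, that $F$ is essentially smooth, and that $\nabla F$ is a bijection from $\S^m_+$ onto $\S^m$. The first two are quick, and the real content is the surjectivity, which I expect to be the main obstacle.

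First I would confirm $F \in \cE^m$. Since $F = F_A + \tfrac{\lambda}{2}\cdot 2 F_B = F_A + \lambda F_B$ is a nonnegative combination (with $\lambda>0$) of two functions already shown to lie in $\cE^m$, it inherits lower semicontinuity, the domain condition ${\rm int}(\dom F) = \S^m_+$, and continuous differentiability on $\S^m_+$. Strict convexity follows because $F_A = -\log\det$ is already strictly convex on $\S^m_+$ and $\lambda F_B$ is convex, so the sum is strictly convex. Using $\nabla F_A(\Sigma) = -\Sigma^{-1}$ and $\nabla F_B(\Sigma) = \Sigma$ from Examples (A) and (B), linearity of the gradient gives $\nabla F(\Sigma) = \lambda \Sigma - \Sigma^{-1}$, as claimed. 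For essential smoothness, I would take any sequence $\Sigma_k \to \partial \S^m_+$, so that the smallest eigenvalue $\lambda_{\min}(\Sigma_k)\to 0$; then $\Sigma_k^{-1}$ has an eigenvalue blowing up, forcing $\|\lambda\Sigma_k - \Sigma_k^{-1}\|_F \to \infty$ (the $\Sigma_k^{-1}$ term dominates near the boundary), which is exactly the definition.

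The crux is bijectivity. Injectivity is immediate from Lemma~\ref{lem:11}, since $F \in \cE^m$. For surjectivity, the cleanest route is to exploit that $F$ is a spectral sum, so $\nabla F$ acts on eigenvalues by the scalar map $g(x) = \lambda x - 1/x$ while fixing eigenvectors. Concretely, if $\Sigma = U\Lambda U^\top$ then $\nabla F(\Sigma) = U\, g(\Lambda)\, U^\top$ where $g$ is applied entrywise to the diagonal. Now $g:(0,\infty)\to\R$ has derivative $g'(x) = \lambda + 1/x^2 > 0$, so $g$ is strictly increasing, and since $g(x)\to -\infty$ as $x\to 0^+$ and $g(x)\to +\infty$ as $x\to\infty$, it is a bijection from $(0,\infty)$ onto $\R$. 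Given any target $M \in \S^m$, diagonalize $M = U D U^\top$ with $D = {\rm diag}(d_1,\ldots,d_m)$; then setting $\lambda_i = g^{-1}(d_i) > 0$ and $\Sigma = U\,{\rm diag}(\lambda_1,\ldots,\lambda_m)\,U^\top \in \S^m_+$ gives $\nabla F(\Sigma) = M$. This proves $\nabla F(\S^m_+) = \S^m$.

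The one point requiring care is that this eigenvalue-wise construction genuinely computes $\nabla F$; I would cite the spectral-sum machinery (Proposition~\ref{prop:Fder} and the surrounding discussion in Appendix~\ref{sec:spec}) to justify that $\nabla F(\Sigma) = \phi'(\Sigma)$ with $\phi'(x) = \lambda x - 1/x$ acts spectrally as claimed, so that $g = \phi'$ and the diagonalization argument is valid. Alternatively, and perhaps more in the spirit of the paper's convex-analytic framework, surjectivity follows from Proposition~\ref{prop:legendre}: since $F$ is Legendre, $\nabla F$ is an isomorphism onto ${\rm int}(\dom(F^*))$, and essential smoothness of $F$ combined with $\L^m_+ = \S^m$ would identify the image as all of $\S^m$; however, establishing $\L^m_+ = \S^m$ is precisely the surjectivity we want, so the direct eigenvalue argument is the honest proof and the one I would write out.
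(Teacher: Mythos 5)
Your proof is correct and follows essentially the same route as the paper: both reduce everything to the scalar function $\phi(x)=-\log(x)+\tfrac{\lambda}{2}x^2$, note that $\phi'(x)=\lambda x - 1/x$ blows up as $x\to 0^+$ (essential smoothness) and maps $(0,+\infty)$ bijectively onto $\R$, and lift this to matrices via the spectral-sum calculus. Your write-up merely spells out the details (the decomposition $F=F_A+\lambda F_B$, the diagonalization of the target matrix) that the paper's one-line proof leaves implicit.
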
 
\begin{proof}
The underlying function $\phi$ is $-\log(x)+\tfrac\lambda2 x^2$ when $x>0$ and $+\infty$ for all other $x$. It is strictly convex and differentiable in $(0,+\infty)$. Moreover, $\phi'(x)=\lambda x-1/x$ and so $|\phi'(x)|\to \infty$ as $x\to 0^+$. This shows that $F\in \cE^m$ is essentially smooth. As $\phi$ maps $(0,+\infty)$ to $\R$, the result follows. \end{proof}

\medskip

\noindent\textit{Natural exponential families for $S\in \S^m_+$:} As suggested by one of the referees, this set-up implicitly leads to definition of a natural exponential family; see Theorem~4 in \cite{banerjee2005clustering}. If $F\in \cE^m$ is essentially smooth then we could study the family of distributions over $S\in \S^m_+$ with density  $p_L(S)=h(S)\exp\{\<S,L\>-F^*(L)\}$. This could be particularly interesting in the case when both $h$ and $F^*$ are spectral sums.
\medskip

\noindent\textit{Gaussian Bayesian networks.} Consider a directed acyclic graph (DAG) $G$ whose nodes represent components of the random vector $X=(X_1,\ldots,X_m)$. We say that the distribution of $X$ lies in a Gaussian linear structural model over $G$ if
$$
X_i\;=\;\sum_{j\to i \in G} \lambda_{ij} X_j+\varepsilon_i,
$$
where $\lambda_{ij}\in \R$ and $\varepsilon_i$ is independent of $X_j$ for each parent $j$ of $i$ in $G$, and the $\varepsilon_i$'s are mutually independent. Denote by $\Lambda\in \R^{m\times m}$ the matrix with entries $\lambda_{ij}$ if $j\to i$ in $G$ and zero otherwise. Then the covariance matrix $\Sigma$ of $X$ satisfies
$\Sigma=(I-\Lambda)^{-1}\Omega (I-\Lambda)^{-\top}$,
where $\Omega$ is the diagonal covariance matrix of $\varepsilon$. Taking the inverse $K=\Sigma^{-1}$, we get  $K=L\Omega^{-1}L^\top$ (see e.g. Proposition~2.1 in \cite{trek}), where $L=(I-\Lambda)^\top$, and so $L_{ij}=0$ unless $i=j$ or $i\to j$ in the underlying DAG.

Looking at the second power of $\Sigma^{-1}$ we see that $$
(K^2)_{ij}=(L\Omega^{-1}L^\top L\Omega^{-1}L^\top)_{ij}=\sum_{u,v,w}\frac{L_{iu}L_{vu}L_{vw}L_{jw}}{\Omega_{uu}\Omega_{ww}}.
$$
In particular, $(K^2)_{ij}=0$ if the graph does not contain a structure $i\to u \leftarrow v \to w \leftarrow j$ in $G$ or any simpler structure obtained from this by contracting some of the arrows. We get a similar interpretation for higher powers of $\Sigma^{-1}$.
\medskip

\begin{acks}[Acknowledgments]
Special thanks go to the anounymous referees and the associate editor for many insightful suggestions. I would also like to thank Daniel Bernstein, Christian Brownlees, Hengchao Chen, Sean Dewar, Mathias Drton, Steven Gortler, G\'{a}bor Lugosi, Geert Mesters, Frank R\"{o}ttger, Alex Valencia, and  Jierui Zhu for helpful remarks. This research was supported by a grant from the Natural Sciences and Engineering Research Council of Canada (NSERC, RGPIN-2023-03481). 
\end{acks}

\appendix

\section{Spectral functions}\label{sec:spec}

\subsection{Basic convex analysis on $\R^m$}\label{sec:basicconv}

In this section we briefly review some basic and very streamlined results in convex analysis. {We refer to \cite{rockafellar1970convex,hiriart2012fundamentals} as good references for convex analysis. Chapter~5 of \cite{barndorff:78} provides an exposition of the most statistically-relevant results from \cite{rockafellar1970convex}. Our discussion of Bregman divergences is closely related to \cite{bauschke1997legendre}}

By $\conv(\R^m)$ denote the set of convex function $f\in \R^m\to \R\cup \{+\infty\}$ that are not identically equal to $+\infty$ (those are sometimes called proper convex functions). The domain of $f\in \conv(\R^m)$ is the non-empty set $\dom(f)=\{x\in \R^m: f(x)<+\infty\}$. By $\cconv(\R^m)$ denote the class of functions in $\conv(\R^m)$ that are lower semicontinuous on $\R^m$ - these are also known as closed convex functions. Recall that $f$ is lower semicontinuous if the lower level-set $\{x: f(x)\leq t\}$ is closed for all $t\in \R$.

Let $f:\R^m\to \R\cup\{+\infty\}$ be \emph{any} function that is not identically equal to $+\infty$. Suppose, in addition, that there is an affine function minorizing $f$ on $\R^m$.  If $f\in \conv(\R^m)$, this is automatically satisfied; see Proposition~B1.2.1 in \cite{hiriart2012fundamentals}. The conjugate function $f^*$ is defined for $s\in \R^m$ as
$$
f^*(s)\;:=\;\sup_{x\in \R^m}\{\<s,x\>-f(x)\}.
$$
 If $f$ is minorized by $\<s_0,x\>-b$, for some $s_0\in \R^m$ and $b\in \R$. Then $f^*(s_0)\leq b$ and $f^*(s)>-\infty$ for all $s$ because $\dom(f)\neq \emptyset$. Thus $f^*:\R^m\to \R\cup\{+\infty\}$ and it is not  equal to $+\infty$ everywhere. Since it is a pointwise supremum of linear functions, $f^*$ is convex. In fact, $f^*\in \cconv(\R^m)$ irrespective of whether $f$ is convex. We actually have the following fundamental result: $f\in \cconv(\R^m)$ if and only if $f=(f^*)^*$; Corollary E.1.3.6 in \cite{hiriart2012fundamentals}.

Note that, by definition, $f(x)+f^*(s)-\<s,x\>\geq 0$ for every $s,x$. The conjugate function can be used to conveniently define the subdifferential of $f$ at $x$ for any $f\in \conv(\R^m)$
$$
\partial f(x)\;:=\;\{s\in \R^m: f(x)+f^*(s)-\<s,x\>=0\};
$$
see Theorem~E1.4.1 in \cite{hiriart2012fundamentals}. Moreover,  $f$ is differentiable at $x$ with gradient $\nabla f(x)$ if and only if $\partial f(x)=\{\nabla f(x)\}$ and if $x\notin \dom(f)$ then $\partial f(x)=\emptyset$. 

\subsection{General results on spectral functions on $\S^m$}
Establishing convexity of a general function $F:\S^m\to \R\cup\{+\infty\}$ and computing its gradient may be complicated. We note however that our running examples are spectral functions \cite{lewis1996convex,watkins1974convex}. For reader's convenience we briefly mention relevant results and definitions. 
\begin{defn}
A function $F:\S^m\to \R\cup\{+\infty\}$ is a spectral function if $F(U^\top \Sigma U)=F(\Sigma)$ for all $\Sigma\in \S^m$ and orthogonal $U$. In particular, $F(\Sigma)$ depends on the eigenvalues of $\Sigma$ only.
\end{defn}
Associated with any spectral function is a symmetric real-valued function $f:\R^m\to \R\cup\{+\infty\}$. Specifically, we define $f(\lambda)=F({\rm diag}(\lambda))$, 
where ${\rm diag}(\lambda)$ is the diagonal matrix with $\lambda=(\lambda_1,\ldots,\lambda_m)$ on the diagonal. Denote by $\lambda(\Sigma)=(\lambda_1(\Sigma),\ldots, \lambda_m(\Sigma))$ the spectrum of $\Sigma$ in a decreasing order. The spectral functions are precisely those of the form $F(\Sigma)=F({\rm diag}(\lambda(\Sigma)))=f(\lambda(\Sigma))$. The following important result appears as Corollary~2.4 in \cite{lewis1996convex}; see also \cite{davis1957all}.
\begin{thm}\label{th:speccconv}
    A symmetric function $f$ on $\R^m$ satisfies $f\in \cconv(\R^m)$ if and only if the associated spectral function $F$ defined by $F(\Sigma)=f(\lambda(\Sigma))$ satisfies $F\in \cconv(\S^m)$. 
\end{thm}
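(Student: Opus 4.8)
The plan is to reduce everything to the single conjugacy identity
$$
(f\circ\lambda)^*\;=\;f^*\circ\lambda
$$
valid for any symmetric $f$ (with $f^*$ again symmetric), and then to invoke the biconjugation characterization $g\in\cconv$ iff $g=(g^*)^*$ recalled in Appendix~\ref{sec:basicconv}; this is essentially the route of Theorem~2.3 in \cite{lewis1996convex}. Writing $F=f\circ\lambda$, the identity gives $F^*=f^*\circ\lambda$ and, applied once more to the symmetric function $f^*$, yields $(F^*)^*=(f^*)^*\circ\lambda$. If $f\in\cconv(\R^m)$ then $(f^*)^*=f$, hence $(F^*)^*=f\circ\lambda=F$, so $F\in\cconv(\S^m)$. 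Thus the substantive direction follows immediately once the identity is in hand.

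The reverse direction is elementary and I would dispatch it first. If $F\in\cconv(\S^m)$, restrict $F$ to the subspace of diagonal matrices, identified with $\R^m$ via ${\rm diag}$. Because $f$ is symmetric, $F({\rm diag}(\lambda))=f(\lambda({\rm diag}(\lambda)))=f(\lambda)$, so this restriction is exactly $f$. The restriction of a closed proper convex function to a closed linear subspace on which it is not identically $+\infty$ is again closed proper convex (the relative lower level sets are intersections of closed sets with the subspace), and properness of $f$ is inherited since $F(\Sigma)=f(\lambda(\Sigma))$ would otherwise be identically $+\infty$. This gives $f\in\cconv(\R^m)$.

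The heart of the matter is the conjugacy identity, whose proof rests on von Neumann's trace inequality $\tr(XY)\le\langle\lambda(X),\lambda(Y)\rangle$ for symmetric $X,Y$, with equality when $X$ and $Y$ admit a common eigenbasis with eigenvalues listed in the same decreasing order. Using this, the definition $F^*(Y)=\sup_{X}\{\tr(XY)-f(\lambda(X))\}$ gives
$$
F^*(Y)\le\sup_{X\in\S^m}\{\langle\lambda(X),\lambda(Y)\rangle-f(\lambda(X))\}=f^*(\lambda(Y)),
$$
the final equality holding because, by symmetry of $f$ and the rearrangement inequality, $\sup_{\mu\in\R^m}\{\langle\mu,\lambda(Y)\rangle-f(\mu)\}$ is attained on decreasingly ordered $\mu$, which are precisely the vectors $\lambda(X)$. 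For the matching lower bound I would diagonalize $Y=U{\rm diag}(\lambda(Y))U^\top$ and test $F^*$ against $X=U{\rm diag}(\mu)U^\top$ with $\mu$ decreasing; then $\tr(XY)=\langle\mu,\lambda(Y)\rangle$ and $\lambda(X)=\mu$, so $F^*(Y)\ge\langle\mu,\lambda(Y)\rangle-f(\mu)$, and supremizing over such $\mu$ recovers $f^*(\lambda(Y))$. Symmetry of $f^*$ follows from a change of variables by a permutation matrix in its defining supremum.

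I expect the main obstacle to be the clean bookkeeping of orderings in the conjugacy identity: one must track that eigenvalue vectors are sorted, that $f$ and $f^*$ are evaluated consistently on unsorted arguments via their symmetry, and that the equality case of von Neumann's inequality is realized by a common eigenbasis. Everything else — the two applications of biconjugation and the restriction argument — is routine.
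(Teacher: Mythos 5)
Your proposal is correct and follows essentially the same route as the paper's proof: the reverse direction by restriction to the diagonal subspace, and the forward direction via the conjugacy identity $F^*=f^*\circ\lambda$ (proved with von Neumann's trace inequality and its equality case) combined with biconjugation. Your explicit treatment of the ordering bookkeeping and the lower-bound test matrices only fills in details the paper's sketch leaves implicit.
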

\begin{proof}[Sketch proof] The left implication follows easily by restricting to the linear subspace of diagonal matrices $\Sigma$. So suppose $f\in \cconv(\R^m)$. To show that $F\in \cconv(\S^m)$ we show that $F=(F^*)^*$. For this we first show that if $F(\Sigma)=f(\lambda(\Sigma))$ for $f:\R^m\to \R$ symmetric then $F^*(L)=f^*(\lambda(L))$. For this, we use the von Neumann trace inequality which states that $\<L,\Sigma\>\leq \lambda(L)^\top \lambda(\Sigma)$ with equality if and only if there exists an orthogonal matrix $U$ such that $U^\top \Sigma U$ and $U^\top L U$ are simultaneously diagonal; see \cite{mirsky1959trace}. It follows that 
\begin{eqnarray*}
	F^*(L)&=&\sup_{\Sigma\in \S^m}\{\<L,\Sigma\>-F(\Sigma)\}\;\leq\;\sup_{\Sigma\in \S^m}\lambda(L)^\top \lambda(\Sigma)-f(\lambda(\Sigma))\\
	&=&	 \sup_{u\in \R^m}\{ \lambda(L)^\top u-f(u)\}\;=\;f^*(\lambda(L)).
\end{eqnarray*}
Note that since the von Neumann inequality can become equality as described above, the inequality above becomes equality proving that $F^*(L)=f^*(\lambda(L))$. Suppose now that $f\in \cconv(\R^m)$ or, equivalently, $f=(f^*)^*$. By the argument above applied to $F^*$ 
$$
(F^*)^*(\Sigma)\;=\;(f^*)^*(\lambda(\Sigma))\;=\;f(\lambda(\Sigma))\;=\;F(\Sigma).
$$\end{proof}

Computing the gradient of a spectral function  $F$ also relies on computing the gradient of $f$. The following result will be useful; see Corollary~3.2 \cite{lewis1996convex}.
\begin{thm}\label{th:Fder}
    Suppose that the function $f\in \cconv(\R^m)$ is  symmetric.  If $F(\Sigma)=f(\lambda(\Sigma))$ is the associated spectral function and $\Sigma=U{\rm diag}(\lambda(\Sigma)) U^\top$ for an orthogonal matrix $U$ then for every $\Sigma$ such that $f$ is differentiable at $\lambda(\Sigma)$, $F$ is differentiable at $\Sigma$ and
    $$
    \nabla F(\Sigma)\;=\; U\, {\rm diag}(\nabla f(\lambda(\Sigma))) \,U^\top.
    $$
\end{thm}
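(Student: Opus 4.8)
The plan is to route everything through the subdifferential, using the fact recalled in Appendix~\ref{sec:basicconv} that a convex function is differentiable at a point exactly when its subdifferential there is a singleton, in which case that singleton is the gradient. So I would first pin down $\partial F(\Sigma)$ completely, and then show that differentiability of $f$ at $\lambda(\Sigma)$ collapses it to a single point.

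First I would prove the spectral subdifferential formula: $L\in\partial F(\Sigma)$ if and only if $\Sigma$ and $L$ share an ordered spectral decomposition, i.e. there is an orthogonal $V$ with $\Sigma=V\,{\rm diag}(\lambda(\Sigma))\,V^\top$ and $L=V\,{\rm diag}(\lambda(L))\,V^\top$, and moreover $\lambda(L)\in\partial f(\lambda(\Sigma))$. By the conjugate characterization of the subdifferential, $L\in\partial F(\Sigma)$ is equivalent to $F(\Sigma)+F^*(L)=\<L,\Sigma\>$. Using $F(\Sigma)=f(\lambda(\Sigma))$ and the identity $F^*(L)=f^*(\lambda(L))$ established in the proof of Theorem~\ref{th:speccconv}, together with the Fenchel--Young inequality for $f$ and the von Neumann trace inequality $\<L,\Sigma\>\le\lambda(L)^\top\lambda(\Sigma)$, I get the chain
$$\<L,\Sigma\>\;=\;f(\lambda(\Sigma))+f^*(\lambda(L))\;\ge\;\lambda(L)^\top\lambda(\Sigma)\;\ge\;\<L,\Sigma\>.$$
Forcing both inequalities to be equalities simultaneously yields the two conclusions: equality in von Neumann gives the common ordered eigenbasis $V$, and equality in Fenchel--Young gives $\lambda(L)\in\partial f(\lambda(\Sigma))$. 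The converse uses these two equality conditions in reverse, where the only subtle point is that the candidate subgradient vector must be decreasingly ordered to match $\lambda(\Sigma)$; this holds because a subgradient of a symmetric convex function is always sorted in the same order as the point, a standard consequence of symmetry and convexity.

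Next I would specialize to the case where $f$ is differentiable at $\lambda(\Sigma)$, so $\partial f(\lambda(\Sigma))=\{\nabla f(\lambda(\Sigma))\}$. The subdifferential formula then forces every $L\in\partial F(\Sigma)$ to satisfy $\lambda(L)=\nabla f(\lambda(\Sigma))$ and to take the form $V\,{\rm diag}(\nabla f(\lambda(\Sigma)))\,V^\top$ for some eigenbasis $V$ of $\Sigma$. In particular the candidate $U\,{\rm diag}(\nabla f(\lambda(\Sigma)))\,U^\top$ from the statement lies in $\partial F(\Sigma)$, so the subdifferential is nonempty; it remains only to show it is a single point.

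The main obstacle is exactly this well-definedness when $\Sigma$ has repeated eigenvalues, since then its eigenbasis $V$ is not unique and a priori distinct choices could produce distinct matrices $V\,{\rm diag}(\nabla f(\lambda(\Sigma)))\,V^\top$. Here symmetry of $f$ is decisive: if $\lambda_i(\Sigma)=\lambda_j(\Sigma)$ then applying the coordinate transposition $\tau_{ij}$ to the identity $f=f\circ\tau_{ij}$ and differentiating shows $\partial_i f(\lambda(\Sigma))=\partial_j f(\lambda(\Sigma))$. Hence ${\rm diag}(\nabla f(\lambda(\Sigma)))$ is constant on each block of coordinates indexed by a repeated eigenvalue, so it commutes with every orthogonal change of basis preserving the eigenspaces of $\Sigma$. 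Consequently $V\,{\rm diag}(\nabla f(\lambda(\Sigma)))\,V^\top$ is independent of the choice of eigenbasis $V$, and $\partial F(\Sigma)$ reduces to the single point $U\,{\rm diag}(\nabla f(\lambda(\Sigma)))\,U^\top$. By the singleton criterion, $F$ is differentiable at $\Sigma$ with this matrix as $\nabla F(\Sigma)$, which completes the argument.
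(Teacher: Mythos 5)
Your proposal is correct and follows essentially the same route as the paper's proof sketch: the conjugate characterization of the subdifferential, the identity $F^*(L)=f^*(\lambda(L))$, and the chain of Fenchel--Young and von Neumann trace inequalities forced into equalities. The only difference is that you carefully supply two points the paper's sketch leaves implicit --- nonemptiness of $\partial F(\Sigma)$ and independence of the formula from the choice of eigenbasis when eigenvalues repeat --- both of which you handle correctly via the symmetry of $f$.
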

\begin{proof}[Proof Sketch]
In the proof of Theorem~\ref{th:speccconv} we showed that $F^*(L)=f^*(\lambda(L))$. By Section~\ref{sec:basicconv}, we have $L\in \partial F(\Sigma)$ exactly when
	$$
	\<L,\Sigma\>\;=\;F(\Sigma)+F^*(L)\;=\;f(\lambda(\Sigma))+f^*(\lambda(L))\;\leq\;\lambda^\top(L)\lambda(\Sigma)\;\leq\;\<L,\Sigma\>,
	$$
	where the first inequality follows from the definition of $f^*$ and the second inequality follows again by the von Neumann trace inequality \cite{mirsky1959trace}. The fact that the left most expression is equal to the right most expression implies that both inequalities are equalities and so: (i)  $\lambda(L)\in \partial f(\lambda(\Sigma))$, (ii) if $\Sigma=U{\rm diag}(\lambda(\Sigma))U^\top$ then $L=U{\rm diag}(\lambda(L))U^\top$. If $f$ is differentiable at $\lambda(\Sigma)$ then $\nabla f(\lambda(\Sigma))=\lambda(L)$ and $\nabla F(\Sigma)=L=U {\rm diag}(\lambda(L))U^\top$, which concludes the proof.
\end{proof}
 
We note that this result is true more generally for spectral functions that are not convex; see Theorem~1.1 in \cite{lewis1996derivatives}. The computation of the second order derivatives is generally much more complicated \cite{lewis2001twice}. 

It is worth noting that all our examples have a special form
\begin{equation}\label{eq:spsumf}
f(\lambda)\;=\;\sum_{i=1}^m \phi(\lambda_i)	
\end{equation}
for some smooth function $\phi:\R\to \R\cup\{+\infty\}$. For example, $-\log\det(\Sigma)=-\sum_{i=1}^m \log\lambda_i(\Sigma)$ and $\tfrac12\tr(\Sigma^2)=\tfrac12\sum_{i=1}^m \lambda_i^2(\Sigma)$. Using the matrix function notation (e.g. \cite{higham}) we can write it more elegantly as
$F(\Sigma)\;=\;\tr(\phi(\Sigma))$.
Such functions are also sometimes called spectral sums. The following follows from Theorem~\ref{th:Fder}.
\begin{prop}\label{prop:Fder}
	Suppose that $\phi$ is differentiable in $(0,+\infty)$ and $F(\Sigma)={\rm tr}(\phi(\Sigma))$. Then, for every $\Sigma\in \S^m_+$,  we get
$\nabla \tr(\phi(\Sigma))\;=\;\phi'(\Sigma)$.
\end{prop}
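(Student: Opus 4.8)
The plan is to recognize $F$ as the spectral function attached to the separable symmetric function $f(\lambda)=\sum_{i=1}^m \phi(\lambda_i)$ and then read off its gradient from Theorem~\ref{th:Fder}. First I would note that, by the matrix-function convention, $F(\Sigma)=\tr(\phi(\Sigma))=\sum_{i=1}^m \phi(\lambda_i(\Sigma))=f(\lambda(\Sigma))$, so $F$ is indeed the spectral function associated with $f$, and $f$ is manifestly invariant under permutations of its arguments, i.e. symmetric.

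Next I would compute $\nabla f$ on the positive orthant. Since $f$ is separable and $\phi$ is differentiable on $(0,+\infty)$, the partial derivatives exist at any $\lambda$ with strictly positive coordinates and are given by $\partial f/\partial \lambda_i=\phi'(\lambda_i)$; hence $\nabla f(\lambda)=(\phi'(\lambda_1),\ldots,\phi'(\lambda_m))$. For $\Sigma\in \S^m_+$ all eigenvalues $\lambda_i(\Sigma)$ are positive, so $f$ is differentiable at $\lambda(\Sigma)$ regardless of whether any eigenvalues coincide—separability means differentiability is checked coordinatewise and does not interact with multiplicities.

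I would then apply Theorem~\ref{th:Fder}. Writing a spectral decomposition $\Sigma=U\,{\rm diag}(\lambda(\Sigma))\,U^\top$, the theorem gives
$$
\nabla F(\Sigma)\;=\;U\,{\rm diag}(\nabla f(\lambda(\Sigma)))\,U^\top\;=\;U\,{\rm diag}\bigl(\phi'(\lambda_1(\Sigma)),\ldots,\phi'(\lambda_m(\Sigma))\bigr)\,U^\top.
$$
By the very definition of the matrix function $\phi'(\Sigma)$—apply $\phi'$ to each eigenvalue and conjugate back by $U$—the right-hand side is exactly $\phi'(\Sigma)$, which is the claimed identity.

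The one point that needs care is the hypothesis of Theorem~\ref{th:Fder}, which as stated requires $f\in\cconv(\R^m)$. In the context where this proposition is invoked we have $F\in\cE^m$, so $f$ is a closed convex symmetric function and the theorem applies verbatim. If one wishes to assume only differentiability of $\phi$ (dropping convexity), the same gradient formula still holds by the non-convex differentiation result quoted immediately after Theorem~\ref{th:Fder} (Theorem~1.1 in \cite{lewis1996derivatives}), so the argument is unchanged. This is the only genuine subtlety; everything else is bookkeeping with the spectral decomposition.
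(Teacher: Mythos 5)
Your proof is correct and follows essentially the same route as the paper: write $F$ as the spectral function of the separable symmetric $f(\lambda)=\sum_i\phi(\lambda_i)$, compute $\nabla f$ coordinatewise, and apply Theorem~\ref{th:Fder} to conclude $\nabla F(\Sigma)=U\,{\rm diag}(\phi'(\lambda(\Sigma)))\,U^\top=\phi'(\Sigma)$. Your remark about the convexity hypothesis of Theorem~\ref{th:Fder} (and the non-convex fallback via \cite{lewis1996derivatives}) is a useful bit of care that the paper's own one-line proof leaves implicit.
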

\begin{proof}
	If $f(\lambda)=\sum_i \phi(\lambda_i)$ then $\tfrac{\partial f}{\partial \lambda_i}(\lambda)=\phi'(\lambda_i)$. By Theorem~\ref{th:Fder}, if $\Sigma=U{\rm diag}(\lambda(\Sigma))U^\top$ then
	$$
	\nabla \tr(\phi(\Sigma))\;=\;U\phi'({\rm diag}(\lambda(\Sigma)))U^\top\;=\;\phi'(\Sigma).
	$$
\end{proof}

\subsection{Running examples}\label{sec:runex}\label{sec:conjugate}

Many results in this paper rely on the fact that the underlying function $F$ lies in $\cconv(\S^m)$. In this section we study specifically spectral sums.
\begin{lem}\label{lem:spectrlcconv}Suppose $F(\Sigma)=\tr(\phi(\Sigma))$ then $F^*(L)=\tr(\phi^*(L))$. Moreover, $F\in \cconv(\S^m)$ if and only if $\phi\in \cconv(\R)$.
\end{lem}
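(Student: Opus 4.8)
The plan is to exploit the separable spectral structure of $F$. Writing $F(\Sigma)=\tr(\phi(\Sigma))=\sum_{i}\phi(\lambda_i(\Sigma))=f(\lambda(\Sigma))$ with the symmetric function $f(\lambda)=\sum_{i=1}^m\phi(\lambda_i)$ as in \eqref{eq:spsumf}, I would reduce both assertions to elementary facts about the separable function $f$ and then feed them into the spectral machinery already developed in Theorem~\ref{th:speccconv}.

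For the conjugate identity, I would first recall that the proof of Theorem~\ref{th:speccconv} establishes, for \emph{any} symmetric $f$, the relation $F^*(L)=f^*(\lambda(L))$; this step uses only the von Neumann trace inequality and the symmetry of $f$, not its convexity, so it is available to us before convexity has been discussed. It then remains to compute the conjugate of the separable function $f$. Since the supremum in $f^*(\mu)=\sup_{\lambda\in\R^m}\{\<\mu,\lambda\>-\sum_i\phi(\lambda_i)\}$ decouples across coordinates, it splits as $\sum_i\sup_{t\in\R}\{\mu_i t-\phi(t)\}=\sum_i\phi^*(\mu_i)$. Evaluating at $\mu=\lambda(L)$ and using that $\tr(\phi^*(L))=\sum_i\phi^*(\lambda_i(L))$ by the very definition of the matrix function $\phi^*(L)$, I obtain $F^*(L)=\sum_i\phi^*(\lambda_i(L))=\tr(\phi^*(L))$.

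For the equivalence $F\in\cconv(\S^m)\iff\phi\in\cconv(\R)$, Theorem~\ref{th:speccconv} reduces it immediately to showing $f\in\cconv(\R^m)\iff\phi\in\cconv(\R)$. The forward implication I would prove by restricting $f$ to a coordinate line: fixing some $c\in\dom(\phi)$, which exists because properness of $f$ forces $\phi$ to be finite somewhere, the map $t\mapsto f(t,c,\dots,c)=\phi(t)+(m-1)\phi(c)$ exhibits $\phi$, up to the additive constant $(m-1)\phi(c)$, as the composition of the closed convex $f$ with an affine map, hence closed and convex, and proper since it is finite at $t=c$. Conversely, if $\phi\in\cconv(\R)$ then each summand $\lambda\mapsto\phi(\lambda_i)$ is closed and convex on $\R^m$, being a closed convex function composed with a coordinate projection, so their finite sum $f$ is closed and convex; properness of $f$ follows by evaluating at a constant vector whose entry lies in $\dom(\phi)$.

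None of these steps presents a genuine obstacle. The only points requiring care are the bookkeeping of properness and lower semicontinuity in the separability arguments, and making explicit that the identity $F^*=f^*\circ\lambda$ extracted from the proof of Theorem~\ref{th:speccconv} is valid for the merely symmetric $f$ used here, since in this lemma convexity of $\phi$ is a conclusion of the second claim rather than a standing hypothesis.
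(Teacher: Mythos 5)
Your proof is correct, and for the first claim it is the paper's proof verbatim in spirit: both extract the identity $F^*(L)=f^*(\lambda(L))$ from the proof of Theorem~\ref{th:speccconv} (correctly observing that this step needs only symmetry of $f$ and the von Neumann trace inequality, not convexity) and then decouple the separable supremum to get $f^*(\mu)=\sum_i\phi^*(\mu_i)=\tr(\phi^*(L))\big|_{\mu=\lambda(L)}$. Where you diverge is in closing out the equivalence $F\in\cconv(\S^m)\iff\phi\in\cconv(\R)$. The paper simply applies the first claim twice to obtain $F^{**}(\Sigma)=\tr(\phi^{**}(\Sigma))$ and then invokes the biconjugation criterion ($g\in\cconv\iff g=g^{**}$) on both sides, restricting to diagonal matrices to pass from $F=F^{**}$ to $\phi=\phi^{**}$. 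You instead route through Theorem~\ref{th:speccconv} to reduce to $f\in\cconv(\R^m)\iff\phi\in\cconv(\R)$ and prove that equivalence by hand: restriction to a coordinate line for the forward direction, and closedness plus convexity of a finite sum of compositions with coordinate projections for the converse. Your version is slightly longer but more elementary and makes the properness and lower-semicontinuity bookkeeping explicit, which the paper leaves implicit in the phrase ``in consequence''; the paper's version is shorter because it reuses the first claim as a black box. Both are sound, and the underlying computation that does all the work --- the separability of $f^*$ --- is identical.
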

\begin{proof}
First note that in the proof of Theorem~\ref{th:speccconv} we showed $F^*(L)$ is induced by the conjugate of $f(\lambda)=\sum_i\phi(\lambda_i)$. We have $$
f^*(y)\;=\;\sup_{x\in \R^m} \{\<x,y\>-\sum_{i=1}^m \phi(x_i)\}\;=\;\sum_{i=1}^m \phi^*(y_i),
$$
which proves the first statement. This implies that $F^{**}(\Sigma)=\tr(\phi^{**}(\lambda))$ and in consequence also the second part of the lemma.
\end{proof}

As we said earlier, we normally define $F$ over $\S^m_+$ and then we extend it to $\S^m_+$ by taking the lower semicontinuous closure. We now explain how this works for our running examples. 

Suppose that $\phi$ is a proper convex function. Taking $\tr(\phi(\Sigma))+\i_{\overline{\S}^m_+}(\Sigma)$ in Remark~\ref{rem:addi} corresponds to adding to $\phi$ the indicator of the closed interval $[0,+\infty)$. Here the semicontinuous closure is easy to calculate directly. Concretely, the functions $\phi:\R\to \R\cup\{+\infty\}$ that we consider in this paper take the form
$$
\text{\bf (i)}\;\; \phi(x)=+\infty\;\;\;\mbox{for }x<0; \quad \text{\bf (ii)}\;\; \phi(x)<+\infty\;\;\;\mbox{for }x>0; \quad\text{\bf (iii)}\;\; \phi(0)=\lim_{x\to 0^+} \phi(x).
$$
We have $\dom(\phi)=(0,+\infty)$ or $\dom(\phi)=[0,+\infty)$ depending on whether the limit in (iii) is finite or not. 

Now we can present our running examples more formally. In example (A) our function was defined by $\phi_A(x)=-\log(x)$ for $x>0$. We have $\lim_{x\to 0^+} \phi_A(x)=+\infty$ and so:
\begin{equation}\label{eq:A}
	\phi_A(x)\;=\;\begin{cases}
		-\log(x) & x>0\\
		+\infty & x\leq 0
	\end{cases},\qquad F_A(\Sigma)\;=\;\begin{cases}
		-\log\det(\Sigma) & 
		\Sigma\in \S^m_+\\
		+\infty & \Sigma\notin \S^m_+
	\end{cases}.	
\end{equation}
and so $\dom(F_A)=\S^m_+$. In example (B):
\begin{equation}\label{eq:B}
\phi_B(x)\;=\;\begin{cases}
		\tfrac12 x^2 & x\geq 0\\
		+\infty & x< 0
	\end{cases},\qquad F_B(\Sigma)\;=\;\begin{cases}
		\tfrac12\tr({\Sigma^2}) & 
		\Sigma\in \overline\S^m_+\\
		+\infty & \Sigma\notin \overline\S^m_+
	\end{cases}.
\end{equation}
and so $\dom(F_B)=\overline\S^m_+$. Note that $\lim_{x\to 0^+}\phi_B(x)=0$ so $x=0$ lies in the domain of $\phi_B$. Here the extension was rather trivial because $\tfrac12 x^2$ is well defined for all $x\in \R$. Example (C) is slightly more subtle:
\begin{equation}\label{eq:C}
\phi_C(x)\;=\;\begin{cases}
		-x(1-\log(x)) & x> 0\\
		0 & x=0\\
		+\infty & x< 0
	\end{cases}, \quad F_C(\Sigma)\;=\;\begin{cases}
		-\tr(\Sigma-{\Sigma}\log(\Sigma)) & 
		\Sigma\in \S^m_+\\
		+\infty & \Sigma\notin \overline\S^m_+
	\end{cases}
\end{equation}
and so $\dom(F_C)=\overline\S^m_+$. Note that we did not write explicitly what is the value of the map $F_C$ on the boundary of $\overline \S^m_+$. This function, similarly to the pseudoinverse, takes the spectral decomposition $\Sigma=U\Lambda U^\top$ and applies the transformation $\phi_C$ only to the non-zero eigenvalues in $\Lambda$ leaving the zero eigenvalues unchanged. In particular it is well defined on $\overline \S^m_+$. Finally, in example (D):
\begin{equation}\label{eq:D}
\phi_D(x)\;=\;\begin{cases}
		\tfrac1x & x> 0\\
		+\infty & x\leq  0
	\end{cases},\qquad F_D(\Sigma)\;=\;\begin{cases}
		\tr({\Sigma^{-1}}) & 
		\Sigma\in \S^m_+\\
		+\infty & \Sigma\notin \S^m_+
	\end{cases}
	\end{equation}
and so $\dom(F_D)=\S^m_+$.

We now proceed to compute the convex conjugates of our running examples. In example (A), $\phi_A$ is given in \eqref{eq:A} and      
$$\phi^*_A(y)\;=\;\sup_{x\in \R}\{xy-\phi(x)\}\;=\;\sup_{x>0}\{xy+\log(x)\}\;=\;\begin{cases}
	-1-\log(-y) & \mbox{if } y<0,\\
	+\infty & \mbox{otherwise}.
\end{cases} $$ 
This calculation shows that 
$$
F^*_A(L)=\begin{cases}
	-m-\log\det(-L) &\mbox{if } L\in -\S^m_+,\\
	+\infty & \mbox{otherwise}
\end{cases},\qquad \dom(F_A^*)\;=\;-\S^m_+\;=\;\nabla F_A(\S^m_+). 
$$
Similarly, $\phi_B$ is given in \eqref{eq:B} and
$$\phi_B^*(y)\;=\;\sup_{x\geq 0}\{xy-\tfrac12 x^2\}\;=\;\begin{cases}
	\tfrac12 y^2 & \mbox{if } y\geq 0,\\
	0 & \mbox{otherwise}.
\end{cases}	
$$
In particular, 
\begin{equation}\label{eq:B2}
F^*_B(L)=\begin{cases}
	\tfrac12 \tr(L^2) &\mbox{if } L\in \overline\S^m_+,\\
	0 & \mbox{otherwise}. 
\end{cases},\qquad \dom(F_B^*)\;=\;\S^m\;\supset\;\nabla F_B(\S^m_+)=\S^m_+
\end{equation}
In example (C), $\phi_C$ is given by \eqref{eq:C} and 
$$\phi^*_C(y)\;=\;\max\{0,\sup_{x> 0}\{xy+x-x\log(x)\}\}\;=\;e^y.
$$ 
In particular, 
$F_C^*(L)=\exp(L)$ and $\dom(F_C^*)=\S^m=\nabla F_C(\S^m_+)$. Finally, $\phi_D$ is given in \eqref{eq:D} and its conjugate is
$$\phi^*_D(y)\;=\;\sup_{x> 0}\{xy-\tfrac{1}{x}\}\;=\;\begin{cases}
	-2\sqrt{-y} & \mbox{if }y\leq 0\\
	+\infty & \mbox{otherwise}
\end{cases}
$$ 
and we have
$$
F^*_D(L)\;=\;\begin{cases}
	-2\tr(\sqrt{-L}) & \mbox{if } L\in -\overline{\S}^m_+\\
	+\infty & \mbox{otherwise}.
\end{cases},\qquad \dom(F_D^*)\;=\;-\overline{\S}^m_+\;\supset\;\nabla F_D(\S^m_+).
$$

\section{Proofs from Section~\ref{sec:stat}}\label{app:proofsstats}

\begin{proof}[Proof of Theorem~\ref{th:clasas}]
By Theorem~4 in \cite{niemiro1992asymptotics}
$$
\sqrt{n}(\widehat\theta_n-\theta_0)\;=\;-\cI_0^{-1} \sqrt{n}\nabla M_n(\theta_0)+o_P(1).
$$
It is then enough to show that $\sqrt{n}\nabla M_n(\theta_0)\overset{d}{\rightarrow}N_m(0,\Omega)$. Using \eqref{eq:h}, we get $$\nabla M_n(\theta_0)\;=\;\tfrac1n \sum_i h(\theta_0,X_i)\;=\;[\<\Sigma_0-S_n,A_i\>]_{i=1}^d.$$ Moreover,$$
\E h(\theta_0,X)\;=\;[\<\Sigma_0-\E XX^\top,A_i\>]_{i=1}^d\;=\;0.
$$
Since the fourth moments exist, the central limit theorem gives that  $\sqrt{n}\nabla M_n(\theta_0)$ converges in distribution to $N(0,\Omega)$, where 
$$
\Omega_{ij}\;=\;n\E (\<\Sigma_0-S_n,A_i\>\cdot \<\Sigma_0-S_n,A_j\>)\;=\;n {\rm var}(S_n)[A_i,A_j]\;=\;\cS[A_i,A_j].
$$
 \end{proof}

\begin{proof}[Proof of Lemma~\ref{lem:finite}]
Recall the definition of $M_n(\theta)$ in \eqref{eq:Mn} as well as $\overline M$ and $\overline M_n$. We have
$$
M_n(\theta_0+\omega)-M_n(\theta_0)=\overline M(\omega)-\tr((S_n-\Sigma_0)L(\omega)).
$$
and $\tr((S_n-\Sigma_0)L(\omega))=\<\omega,\Delta^{(n)}\>$, which  gives
$$
\overline M_n(\omega)\;=\;\overline M(\omega)-\<\omega,\Delta^{(n)}\>.	
$$
The optimum of $\overline M_n(\omega)$ is then obtained for $\widehat\omega=\widehat\theta_n-\theta_0$ satisfying the first order condition $\nabla\overline M(\widehat\omega)=\Delta^{(n)}$. We get
$$\P(\|\widehat\theta_n-\theta_0\|\leq \epsilon)\;=\;\P(\Delta^{(n)}\in \nabla\overline M(\epsilon\B_2)),
$$
where $\epsilon \B_2=\{\omega: \|\omega\|\leq \epsilon\}$. By definition of $\mu$-strong convexity, for all $\|\omega\|\leq \epsilon$
 $$
\overline M(0)\;\geq\;\overline M(\omega)+\<\nabla\overline M(\omega),-\omega\>+\tfrac{\mu}{2}\|\omega\|^2.
$$
Using the fact that, $\overline M(0)=0$ and $\overline M(\omega)\geq 0$, we get that 
\begin{equation}\label{eq:kappaaux}
\<\nabla\overline M(\omega),\omega\>\;\geq\; \overline M(\omega) +\frac{\mu}{2}\|\omega\|^2\;\geq  \;\frac{\mu}{2}\|\omega\|^2.	
\end{equation}
By the H\"{o}lder's inequality $\<\nabla\overline M(\omega),\tfrac{\omega}{\|\omega\|_1}\>\leq \|\nabla\overline M(\omega)\|_\infty$. Thus, dividing \eqref{eq:kappaaux} by $\|\omega\|_1$ we get that
$$
\|\nabla\overline M(\omega)\|_\infty\;\geq \;\frac{\mu\|\omega\|^2}{2\|\omega\|_1}\;\geq\;\frac{\mu\|\omega\|}{2\sqrt{d}},
$$
where the last inequality follows from the fact that $\|\omega\|_1\leq \sqrt{d}\|\omega\|$. Note that, for every $\omega\in \R^d$, $\nabla \overline M(t \omega)$ is monotonely increasing in $t>0$. It now directly follows that 
\begin{equation}\label{eq:kappabound}
\P\big(\Delta^{(n)}\in \nabla\overline M(\epsilon \B_2)\big)\;\;\geq\; \;\P\big(\|\Delta^{(n)}\|_\infty\leq \inf_{\|\omega\|= \epsilon}\|\nabla\overline M(\omega)\|_\infty\big)\;\geq\;\P\big(\|\Delta^{(n)}\|_\infty\leq \tfrac{\mu\epsilon}{2\sqrt{d}}\big).	
\end{equation}
\end{proof}

\begin{proof}[Proof of Theorem~\ref{th:finite}]
By Lemma~\ref{lem:finite} and the union bound, we get
\begin{equation}\label{eq:auxfinite1}
\P(\|\widehat\theta_n-\theta_0\|> \epsilon)\;\leq\; \P(\|\Delta^{(n)}\|_\infty>\tfrac{\mu\epsilon}{2\sqrt{d}})\;\leq\;\sum_{k=1}^d \P(|\Delta^{(n)}_k|>\tfrac{\mu\epsilon}{2\sqrt{d}}).	
\end{equation}
We will bound each term on the right of \eqref{eq:auxfinite1}. Given $\Sigma_0$ and $A_1,\ldots,A_d\in \S^m$ denote the eigenvalues of $\sqrt{\Sigma_0}A_k\sqrt{\Sigma_0}$ by
$$
\lambda_{jk}\;=\;\lambda_j(\sqrt{\Sigma_0}A_k\sqrt{\Sigma_0})\;\in \;\R.
$$
	If $X\sim N(0,\Sigma_0)$ then  
	$X^\top A_k X\;=\;\sum_{j=1}^m \lambda_{jk} Z_j^2$, 
	where $Z\sim N_m(0,I_m)$. Thus, for every $k$ we have
	$$
	\Delta^{(n)}_k\;=\; \frac1n \sum_{i=1}^n (X_i^\top A_k X_i-\tr(\Sigma_0 A_k))\;=\;\frac1n\sum_{i=1}^n\sum_{j=1}^m \lambda_{jk}(Z_{ij}^2-1), 
	$$
	where $Z_{ij}\sim N(0,1)$ are all independent of each other. Recall from Definition~2.7 in \cite{wainwright2019high} that a random variable $Y$ with mean zero is sub-exponential with parameters $(\nu,\alpha)$, both positive, if $\E e^{\lambda Y}\leq e^{\nu^2\lambda^2/2}$ for all $|\lambda|<\tfrac{1}{\alpha}$. By Example~2.8 in \cite{wainwright2019high}, all $Z^2_{ij}-1$ are independent sub-exponential random variables with parameters $(\nu,\alpha)=(2,4)$. We also use another standard result: if $Y_1,\ldots,Y_N$ are independent sub-exponential with parameters $(\nu_i,\alpha_i)$, then for any fixed vector $u\in \R^N$, the linear combination $\sum_i u_i Y_i$ is sub-exponential with parameters $(\sqrt{\sum_i u_i^2\nu_i^2},\max_i \alpha_i)$; see p. 29 in \cite{wainwright2019high}. 	Since each  $\Delta^{(n)}_k$ is a linear combination of independent sub-exponential variables, we conclude that it is is sub-exponential with parameters $(\nu_k,\alpha_k)$ with
	$$
	\nu_k\;=\;\sqrt{\sum_{i=1}^n \sum_{j=1}^m  \tfrac{4}{n^2} \lambda_{jk}^2}\;=\;\frac{2}{\sqrt{n}}\sqrt{\sum_{j=1}^m \lambda^2_{jk}}\;=\;\frac{2}{\sqrt{n}}\|\Sigma_0 A_k\|_F .
	$$
	and
	$$
	\alpha_k\;=\;\frac4n \max_{j=1,\ldots,m} |\lambda_{jk}|\;=\;\frac4n \|\Sigma_0 A_k\| .
	$$
Directly from the definition, if $Y$ is sub-exponential with parameters $(\nu_k,\alpha_k)$, then it is sub-exponential with parameters $(\nu,\alpha)$ whenever $\nu_k\leq \nu$ and $\alpha_k\leq \alpha$. Using the fact that $A_1,\ldots,A_d$ for an orthonormal basis of $\cL$, we now show that, for each $k$,
	$$
	\nu_k\;\leq\;\frac{2}{\sqrt{n}}\|\Sigma_0\|,\qquad \alpha_k\;\leq\; \frac{4}{n}\|\Sigma_0\|.
	$$
	 Note that $\|\Sigma_0 A_k\|\leq \|\Sigma_0\| \|A_k\|\leq \|\Sigma_0\|$ simply because $\|A_k\|\leq \|A_k\|_F=1$. Moreover, $\|\Sigma_0 A_k\|_F\leq \|\Sigma_0\| \|A_k\|_F\leq \|\Sigma_0\|$. The fact that $\|\Sigma_0 A_k\|_F$ can be upper bounded by $\|\Sigma_0\| \|A_k\|_F$ follows easily from the first inequality in \cite{fang1994inequalities}, which states that if $A,B\in \overline\S^m_+$ then $\tr(AB)\leq \|A\|\tr(B)$. From this, for each $k$,
 $$
 \|\Sigma_0 A_k\|_F^2\;=\;\tr(\Sigma_0^2  A_k^2)\;\leq\; \|\Sigma_0^2\| \tr(A_k^2)\;=\;\|\Sigma_0\|^2 \|A_k\|_F^2\;\leq\; \|\Sigma_0\|^2.
 $$
We conclude that each $\Delta^{(n)}_{k}$ is a sub-exponential random variable with parameters $(\tfrac{2}{\sqrt{n}}\|\Sigma_0\|,\tfrac{4}{n}\|\Sigma_0\|)$. Standard sub-exponential tail bound (Proposition~2.9 in \cite{wainwright2019high}) imply then that
	$$
	\P(|\Delta^{(n)}_k|\geq t)\;\leq\; \begin{cases}
		2e^{-\frac{t^2n}{8\|\Sigma_0\|^2}} & \mbox{if }0\leq t\leq \|\Sigma_0\|,\\
		2e^{-\frac{tn}{8\|\Sigma_0\|}} & \mbox{otherwise }.
	\end{cases}
	$$
	The final bound follows by taking $t=\tfrac{\mu\epsilon}{2\sqrt{d}}$ and using \eqref{eq:auxfinite1}. For the last part of the theorem: 
\begin{equation}\label{aux:epsilon}	
	\delta=2d\exp\left\{-\frac{\mu^2\epsilon^2 n}{32d\|\Sigma_0\|^2}\right\}\quad\Longleftrightarrow\quad \epsilon=\frac{4\|\Sigma_0\|}{\mu}\sqrt{\frac{2d}{n}\log\left(\frac{2d}{\delta}\right)}.
\end{equation}
	If $n\geq 8\log(\tfrac{2d}{\delta})$ then the parameter $\epsilon$ in \eqref{aux:epsilon} satisfies $\epsilon\leq \tfrac{2\|\Sigma_0\|\sqrt{d}}{\mu}$, which, by the first part, implies that $\P(\|\widehat\theta_n-\theta_0\|>\epsilon)\leq \delta$ concluding the proof.
\end{proof}

\begin{proof}[Proof of Proposition~\ref{prop:kappabound}]
By Lemma~\ref{lem:gomega}, $\overline M(0)=0$ and $\nabla \overline M(0)=0$. Then by Taylor's theorem, for some $t\in [0,1]$,
$$
 \overline M(\omega)\;=\;\tfrac12 \omega^\top \nabla^2 \overline M(t\omega) \omega\;\geq\;\tfrac12 \lambda_{\min}(\nabla^2 \overline M(t\omega))\cdot \|\omega\|^2,
$$ 
where we will use the standard variational definition of the minimal eigenvalue of $\nabla^2\overline M(t\omega)$
$$
\lambda_{\min}(\nabla^2 \overline M(t\omega))\;=\;\min_{\|u\|=1}u^\top \nabla^2 \overline M(t\omega)u\;=\;\min_{\|u\|=1}\nabla^2 F^*(L(\theta_0+t\omega))[L(u),L(u)].
$$
Note that if $\|u\|= 1$ then $\|L(u)\|_F=1$ because
$$
\<L(u),L(u)\>\;=\;\sum_{i,j}u_i u_j\<A_i,A_j\>\;=\;\sum_{i=1}^m u_i^2\;=\;1.
$$ 
In other words, the minimal eigenvalue of $\nabla^2 \overline M(t\omega)$ corresponds to the restricted minimal eigenvalue of $\nabla^2 F^*(L(\theta_0+t\omega))$, which we bound here by the minimal unrestricted eigenvalue
$$
\lambda_{\min}(\nabla^2 \overline M(t\omega))\;=\;\min_{\|H\|_F=1, H\in \cL}\nabla^2 F^*(L(\theta_0+t\omega))[H,H]\;\geq \; \min_{\|H\|_F=1}\nabla^2 F^*(L(\theta_0+t\omega))[H,H].
$$
Since the gradient mappings $\nabla F$ and $\nabla F^*$ are inverses of each other on $\S^m_+$, by the inverse mapping theorem, at each point $L=\nabla F(\Sigma)\in \L^m_+$, the linear mapping $\nabla^2 F^*(L)$ is the inverse of the linear mapping $\nabla^2 F(\Sigma)$. Since these linear mappings are positive definite (by strict convexity of $F$ and $F^*$) it follows that 
$$
\min_{\|H\|_F=1}\nabla^2 F^*(L(\theta_0+t\omega))[H,H]\;=\;\frac{1}{\max_{\|H\|_F=1}\nabla^2 F(\Sigma(\theta_0+t\omega))[H,H]}.
$$ 
The proof of this result follows like in the standard matrix case by solving the corresponding Lagrange problems.  By Proposition~3.1 in \cite{juditsky2008large}, if $F(\Sigma)=\tr(\phi(\Sigma))$ and the condition \eqref{eq:techcond} holds, then , for every $\Sigma\in \S^m_+$ 
\begin{equation}\label{eq:boundnabla2}
\nabla^2 F(\Sigma)[H,H]\;\leq\;\ell\,\tr(H \phi''(\Sigma)H)\;=\;\ell \tr(\phi''(\Sigma)H^2).
\end{equation}
Since $\phi$ is strictly convex, $\phi''(x)>0$ for all $x>0$ and so, if $\Sigma\in \S^m_+$, then $\phi''(\Sigma)\in \S^m_+$. The first inequality in \cite{fang1994inequalities} states that if $A,B\in \overline\S^m_+$ then $\tr(AB)\leq \|A\|\tr(B)$. We use it with $A=\phi''(\Sigma)$ and $B=H^2$ to conclude
$$
\tr(\phi''(\Sigma)H^2)\;\leq \;\|\phi''(\Sigma)\|\tr(H^2)\;=\;\|\phi''(\Sigma)\|\|H\|^2_F\;=\;\|\phi''(\Sigma)\|
$$
whenever $\|H\|_F= 1$. Putting this all together gives
$$\lambda_{\min}(\nabla^2 \overline M(t\omega))\;\geq\; \frac{1}{\ell\, \|\phi''(\Sigma(\theta_0+t\omega))\|}\;\geq\;\frac{1}{\ell\, \alpha},$$
which shows that $\overline M(\omega)$ is $\tfrac{1}{\ell\alpha}$-strongly convex in the $\epsilon$-ball.
\end{proof}

%

\bibliographystyle{imsart-nameyear}
\bibliography{bib_cov}

\end{document}